\definecolor{airforceblue}{rgb}{0.36, 0.54, 0.66}
\definecolor{cadmiumgreen}{rgb}{0.0, 0.42, 0.24}
\providecommand{\U}[1]{\protect\rule{.1in}{.1in}}
\newtheorem*{rep@theorem}{\rep@title}
\newcommand{\newreptheorem}[2]{%
\newenvironment{rep#1}[1]{%
 \def\rep@title{#2 \ref{##1}}%
 \begin{rep@theorem}}%
 {\end{rep@theorem}}}
\let\@@pmod\pmod
\DeclareRobustCommand{\pmod}{\@ifstar\@pmods\@@pmod}
\def\@pmods#1{\mkern4mu({\operator@font mod}\mkern 6mu#1)}
\newcommand{\doublewidetilde}[1]{{%
  \mathpalette\double@widetilde{#1}%
}}
\newcommand{\double@widetilde}[2]{%
  \sbox\z@{$\m@th#1\widetilde{#2}$}%
  \ht\z@=.9\ht\z@
  \widetilde{\box\z@}%
}
\newtheorem{theorem}{Theorem}[section]
\newtheorem*{theorem*}{Theorem}
\newtheorem{definition}[theorem]{Definition}
\newtheorem*{definition*}{Definition}
\newtheorem{lemma}[theorem]{Lemma}
\theoremstyle{definition}
\newtheorem{remark}[theorem]{Remark}
\newcounter{todocounter}
\DeclareMathOperator{\diag}{diag}
\DeclareMathOperator{\dist}{dist}
\newcommand{\bE}{\mathds{E}}
\newcommand{\bN}{\mathds{N}\,}
\newcommand{\bP}{\mathds{P}}
\newcommand{\bQ}{\mathds{Q}\,}
\newcommand{\bR}{\mathds{R}}
\newcommand{\bT}{\mathds{T}\,}
\newcommand{\bZ}{\mathds{Z}\,}
\newcommand{\rC}{\mathscr{C}}
\newcommand{\rL}{\mathscr{L}}
\newcommand{\sA}{\mathcal{A}}
\newcommand{\sB}{\mathcal{B}}
\newcommand{\sC}{\mathcal{C}}
\newcommand{\sE}{\mathcal{E}}
\newcommand{\sF}{\mathcal{F}}
\newcommand{\sG}{\mathcal{G}}
\newcommand{\sH}{\mathcal{H}}
\newcommand{\sJ}{\mathcal{J}}
\newcommand{\sK}{\mathcal{K}}
\newcommand{\sL}{\mathcal{L}}
\newcommand{\sS}{\mathcal{S}}
\newcommand{\sT}{\mathcal{T}}
\newcommand{\sW}{\mathcal{W}}
\newcommand{\sZ}{\mathcal{Z}}
\newcommand{\ud}{\,\mathrm{d}}
\newcommand{\ind}{\mathds{1}}
\newcommand{\VV}{\Vert}
\newcommand{\can}[1]{%
  \sbox\z@{#1}%
  \mbox{%
    \kern.05em
    \can@rule{.3ex}%
    \can@rule{.6ex}%
    \kern.05em
    #1%
    \kern.1em
  }%
}
\newcommand{\can@rule}[1]{%
  \rlap{\vrule height \dimexpr#1+.2pt\relax
               depth -\dimexpr#1-.2pt\relax
               width \dimexpr\wd\z@+.1em\relax
  }%
}
\begin{document}

\author{Franco Flandoli
  \\ {\small Scuola Normale Superiore,}
% Universit\`a di Pisa,} 
\\ {\small Piazza dei Cavalieri 7,   Pisa,   Italy }
\\ \\
Enrico Priola 
% \footnote{Partially supported  by the
%M.I.U.R. research project Prin 2008 ``Deterministic and stochastic
%methods in the study of evolution problems''. E-mail:
%\footnote{ enrico.priola@unito.it}
 \\ {\small Dipartimento di Matematica, Universit\`a di Torino}
 \\ {\small via Carlo Alberto 10,   Torino,  Italy} \\
%\address{Universit\`a di Torino, Dipartimento di Matematica ``Giuseppe Peano", via Carlo Alberto 10, Torino, Italy}
%\email{enrico.priola@unito.it}
\\ 
Giovanni Zanco 
%\footnote{giovanni.zanco@gmail.com}
\\ {\small Institute of Science and Technology Austria (IST Austria)}
\\ {\small Am Campus 1, Klosterneuburg, Austria}
 \footnote{E-mail: flandoli@dma.unipi.it, enrico.priola@unito.it,   
 giovanni.zanco@ist.ac.at  The second author has been partially supported by INdAM through the GNAMPA Research Project (2017)  ``Sistemi stocastici singolari: buona posizione e problemi di controllo''. 
}
}

\title{A mean-field model with discontinuous coefficients for neurons with spatial interaction}

\vskip 2mm
\date{\small 23 January 2018}

\maketitle

\begin{abstract}
Starting from a microscopic model for a system of neurons evolving in time  which individually follow a stochastic integrate-and-fire type model, we study a mean-field limit of the system. Our model is described by a system of SDEs with discontinuous coefficients for the action potential of each neuron and takes into account the (random) spatial configuration of neurons  allowing the interaction to depend on it. In the limit as the number of particles tends to infinity, we obtain a nonlinear Fokker-Planck type PDE in two variables, with derivatives only with respect to one variable and discontinuous coefficients.
We also study strong well-posedness of the system of SDEs and prove the existence and uniqueness of a weak measure-valued solution to the PDE, obtained as the limit of the laws of the empirical measures for the system of particles.
\end{abstract}

{\vskip 1mm }
 \noindent {\it \small AMS  Subject Classification (2010):} {\small     60H10, 92C20, 62M45, 35Q84  }

\noindent {\it \small Keywords:}  {\small   Mean-field model for neurons,  spatial interaction, SDEs with discontinuous coefficients,
nonlinear Fokker-Planck PDEs}

\section{Introduction}
\label{section:intro}
We propose a model for the action potential of $N$ neurons, with positions fixed in time, that follow integrate-and-fire type dynamics subject to noise and interact with each other through their spikes. The interaction we consider depends also on the positions of the neurons and is of mean-field type. Therefore, in the limit as $N$ tends to infinity each neuron interacts with infinitely many other neurons.\\
The presence of noise in the neuronal dynamics is experimentally confirmed and has been considered by various authors (see the monographs \cite{GKneu02}, \cite{Tuck88}). Integrate-and-fire (IF) models describe a simplified dynamics in which such effect can be studied in detail. Considering large networks of interacting neurons, each one having a membrane potential that evolves following a IF dynamic, leads to modeling the mean-firing rate of the network as the solution to a nonlinear partial differential equation that, at least for mean-field type interactions, is of Fokker-Planck type.\\
Fokker-Planck PDEs for neural networks have been studied recently in \cite{OBH09}, \cite{FB09}, \cite{CCP11}, \cite{CP14}, \cite{DIRT15}, based on an IF model for the potential of each neuron given in \cite{LR03}. As pointed out in \cite{DIRT15}, not much attention is paid in the literature to how the Fokker-Planck PDE is obtained; in particular one expects that the empirical measures of a network with $N$ neurons converge as $N\to\infty$ to the solution of the PDE. This has been rigorously shown only in \cite{DIRT16}, proving convergence to a McKean-Vlasov stochastic differential equation, and in \cite{DMGLP15}, where the hydrodynamic limit is considered.\\
The Fokker-Planck PDE obtained in the cited works exhibits blow-up in finite time, thus there is no global well-posedness, for certain ranges of parameters, due to the interaction term.\\
The model we propose here is simpler but it incorporates two additional aspects: a refractory period after the spike and a localized version of the interaction term, that is, an explicit dependence of the interaction on the positions of the neurons. The refractory period accounts for the fact that after emitting a spike, each neuron is inhibited from interaction. The dependence on a space variable allows to precisely prescribe the interation between different parts of the network; it can also describe the subdivision of the network in sub-populations, whose interaction with each other is of particular interest in neuroscience (see e.g. \cite{KMS96}, \cite{SDG17}). This leads to a description of finite speed signal propagation along the network.\\
More precisely, our mean-field interaction term has two main features: first, it depends both on the positions and on the voltage of the neurons, unlike many models available in the literature; second, it contains indicator functions of suitable intervals in $\bR$, thus requiring us to study a system of SDEs and a Fokker-Planck type PDE with irregular coefficients and dependence on the positions of the neurons that we treat as stochastic parameters.\\
We allow for great generality in the choice of the law of the positions of the neurons, only requiring finite first moment. Hence one can prescribe the geometry of the neural network choosing the law accordingly.\\
We study the limit behaviour of the empirical measures of the network and prove that the limit measure-valued function is the unique weak solution to a nonlinear PDE of Fokker-Planck type and that it exists up to any fixed time $T$, thus not exhibiting blow-up. From the technical point of view, to study the limit of the empirical measures we will also use some ideas of \cite{Oel89}.\\
Our model includes discontinuous coefficients, and is therefore a first step in the study of stochastic interacting particle systems with irregular coefficients. Some of the results we obtain can be immediately generalized to the case of SDEs with measurable and bounded coefficients, but we are able to study the limit PDE only when the coefficients are discontinuous on a set with $0$ Lebesgue measure (see, in particular, Lemma \ref{lemma Phi}). Therefore we stick in the main part of the paper to the particular coefficients coming from the model, and mention some possible generalizations in  Appendix.\\ 

The potential $V$ of each neuron is modeled, as a function of time, with a stochastic differential equation whose solution is projected on a torus given by the interval $[0,2]$ with the identification $0\equiv 2$. This choice allows to model the cycle of spikes of each neuron as we describe below. It is important to notice that, similarly to what is done in most IF models, we do not give a precise description of the spike phenomenon, but we model only the charging phase from the resting potential $v_R=0$ to the firing threshold $v_F=1$ and the refractory period after the spike; moreover we assume that there are no external input currents.

\smallskip
Consider a single cycle, that is $0\leq V\leq 2$. As $0\leq V<1$ the neuron charges, subject to spikes by nearby neurons (i.e., to interaction), to randomness and to the effect of discharge with constant rate (that corresponds to the fact that if no spikes happen in the connected neurons then some charge is lost as time passes); when $V$ reaches the threshold value $1$ the neuron fires and emits a spike into the network. On a real neuron this would have two effects: the potential would rapidly decrease below $0$ and then be restored to $0$, and the neuron would be at rest, inhibited from interacting and spiking for a small amount of time (the refractory period). We model this effect ``switching off'' the interaction term when $V>1$ and letting $V$ evolve as $\ud V=\ud t$ until it reaches the value $2$, where it is restored to $0$ (through the equivalence relation that defines the torus) and the charging cycle begins again. Therefore the values of $V$ between $1$ and $2$ do not correspond to a real life situation but are only a tool we resort to in order to have a convenient mathematical description of the phenomenon.

\smallskip 
To consider the interaction between $N$ neurons we deal with three factors  (see also equation (\ref{eq:dV}) below). Indeed if we consider the voltage $V^{i,N}$ and position $X^{i}$ of the $i$-th neuron, following the description above, a factor $\theta(X^i,X^j)$ accounts for the neuron being connected to some of the other neurons with positions $(X^{j})_j$; a factor $\ind_{[0,1]}(V^i)$ is due to the fact that the neuron feels the interaction only if it is in the charging phase; finally a factor $\ind_{[1,1+\delta]}(V^j)$ is due to the fact that the interaction considers contributions to the charging process only from neurons that have just had their spike $(\delta \in (0,1))$.
The choice of the values $0$, $1$ and $2$ is completely arbitrary, and is just used for our mathematical description; we also do not specify explicitly the form of some of the functions involved, since we only need to make assumptions on their regularity.

\smallskip
A possible more accurate model of the inhibition phase could require that also the noise term be switched off during the refractory period, that is, in our setting, as $V$ becomes larger than $1$. We are forced to include a small noise also in the inhibition phase, for mathematical reasons (i.e., we need $\epsilon$ below to be strictly positive, see in particular Theorems \ref{thm:SDEE1} and \ref{thm:xd} and Lemma \ref{lem:51}).
The effect of oscillations due to the noise at the transition between the active phase and the inhibition phase appears to be negligible on macroscopic scales, thanks to the drift (see for example figure \ref{fig:1b}).
On the other hand, the analysis of a model in which noise contributes only to the charging phase is mathematically extremely interesting, and we will face it in a future work.\\

\smallskip
Now we will introduce precisely the equations describing the model and will give an account of our results and of the following sections. {We also include some figures obtained simulating our model for a finite number of interacting neurons, showing that, even if simple, the model we propose gives a realistic description of single neurons and networks.}
\subsection{The model}
For a Borel set $A$ in an euclidean space, we will denote by $\rL_A$ the Lebesgue measure restricted to $A$.\\
Let $(\Omega,\sF,\bP)$ be a probability space, let $D$ be an open connected  domain in $\bR^3$  and $[0,T]\subset\bR$ a time interval. The microscopic model is as follows: for each $N\in\bN$ consider $N$ neurons, each identified by
\begin{enumerate}[label=\roman{*}.]
\item its position $X^{i}_0=\xi^i$, where $\xi^{i}$, $i\in\bN$ are i.i.d. random variables with finite first moment and such that $\forall i$ $\bP\left(\xi^{i}\in D\right)=1$. We denote by $\nu$ the law of each $\xi^i$. Since the neurons do not move, their position is modeled by the system of trivial equations
  \begin{equation}
    \label{eq:dX}
      \ud X^{i}_t=0\ ,\quad X^{i}_0=\xi^i\ \text{;}
  \end{equation}
\item its action potential given by $V^{i,N}_t\pmod{2}\in[0,2)$, where $V^{i,N}_t\in\bR$ is the strong solution to
  \begin{multline}
    \label{eq:dV}
      \ud V^{i,N}_t=\lambda\left(V^{i,N}_t\pmod{2}\right)\ud t\\+\frac{1}{N}\sum_{j=1}^N\theta\left({\xi^i},{\xi^j}\right)\ind_{[1,1+\delta]}\left(V^{j,N}_t\pmod{2}\right)\ind_{[0,1]}\left(V^{i,N}_t\pmod{2}\right)\ud t\\+\sigma^\epsilon\left(V^{i,N}_t\pmod{2}\right)\ud B^{i}_t\ ,
  \end{multline}
with initial condition $V^{i,N}_0=\eta^{i}\in[0,2)$. We assume that all random variables $\eta^i$ are i.i.d with law $\tilde \rho_0\rL_{[0,2)}$ and $\tilde \rho_0  \in L^2(0,2)$. Moreover, we assume that $\{ \xi^1, ..., \xi^N, \eta^1, ... \eta^N \}$ are independent for any $N\in\bN$.\\

The functions appearing above are given by
\begin{equation*}
\lambda(v)=-\hat\lambda v \ind_{[0,1]}(v)+\ind_{(1,2)}(v),\ \text{ with } \hat\lambda>0\ ;
\end{equation*}
\begin{equation*}
  \theta (x,y) \text{ is a bounded uniformly continuous function on }D\times D\ ;
\end{equation*}
\begin{gather*}  
\sigma^\epsilon(v) \text{ is a  $C_b^1([0,2])$-function  such that \;}
\sigma^\epsilon\geq \sqrt{2\epsilon}>0\ \\ \text{ and }\sigma^\epsilon(v)=\sqrt{2\epsilon}\; \text{ on }[1,2], \; \sigma^\epsilon(2)
=
\sigma^\epsilon(0)=\sqrt{2\epsilon},\\ 
\frac{d\sigma^\epsilon}{dx}(0)
=
\frac{d \sigma^\epsilon}{dx}(2)=0 \ , \text{ with }\epsilon\text{ fixed}.
\end{gather*}
For each $i\in\bN$ the processes $\left(B^{i}_t\right)$ are independent real-valued Brownian motions, independent of $\left(\xi^i\right)$ and $\left(\eta^i\right)$, and $\delta$ is a fixed real number in $(0,1)$.
\end{enumerate}
\begin{figure}[!tbp]
  \centering
\begin{subfigure}[b]{0.49\textwidth}
\includegraphics[width=\textwidth]{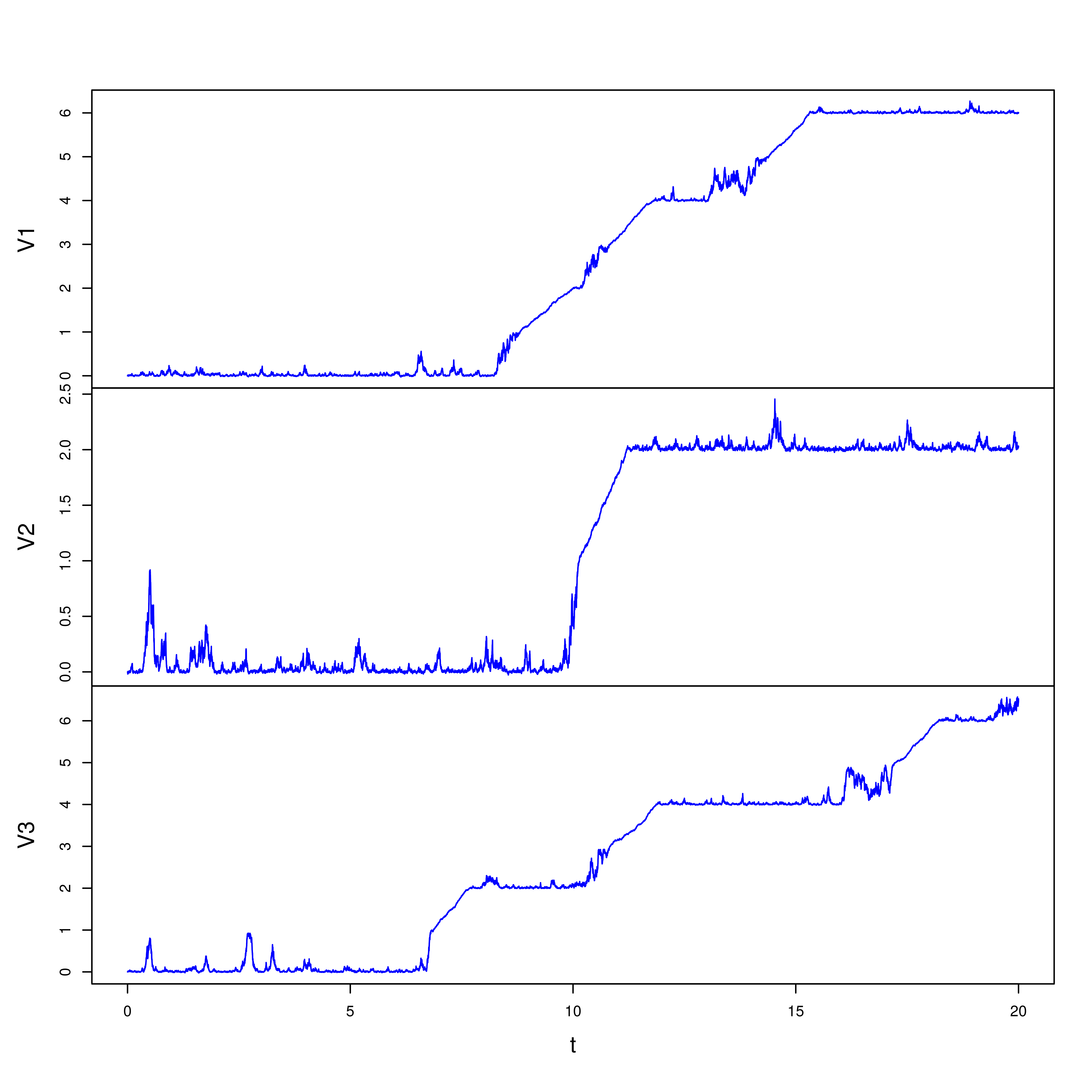}
\caption{Solutions $V^{i,3}$, $i=1,2,3$}
\label{fig:1a}
\end{subfigure}
\hfill
\begin{subfigure}[b]{0.49\textwidth}
  \includegraphics[width=\textwidth]{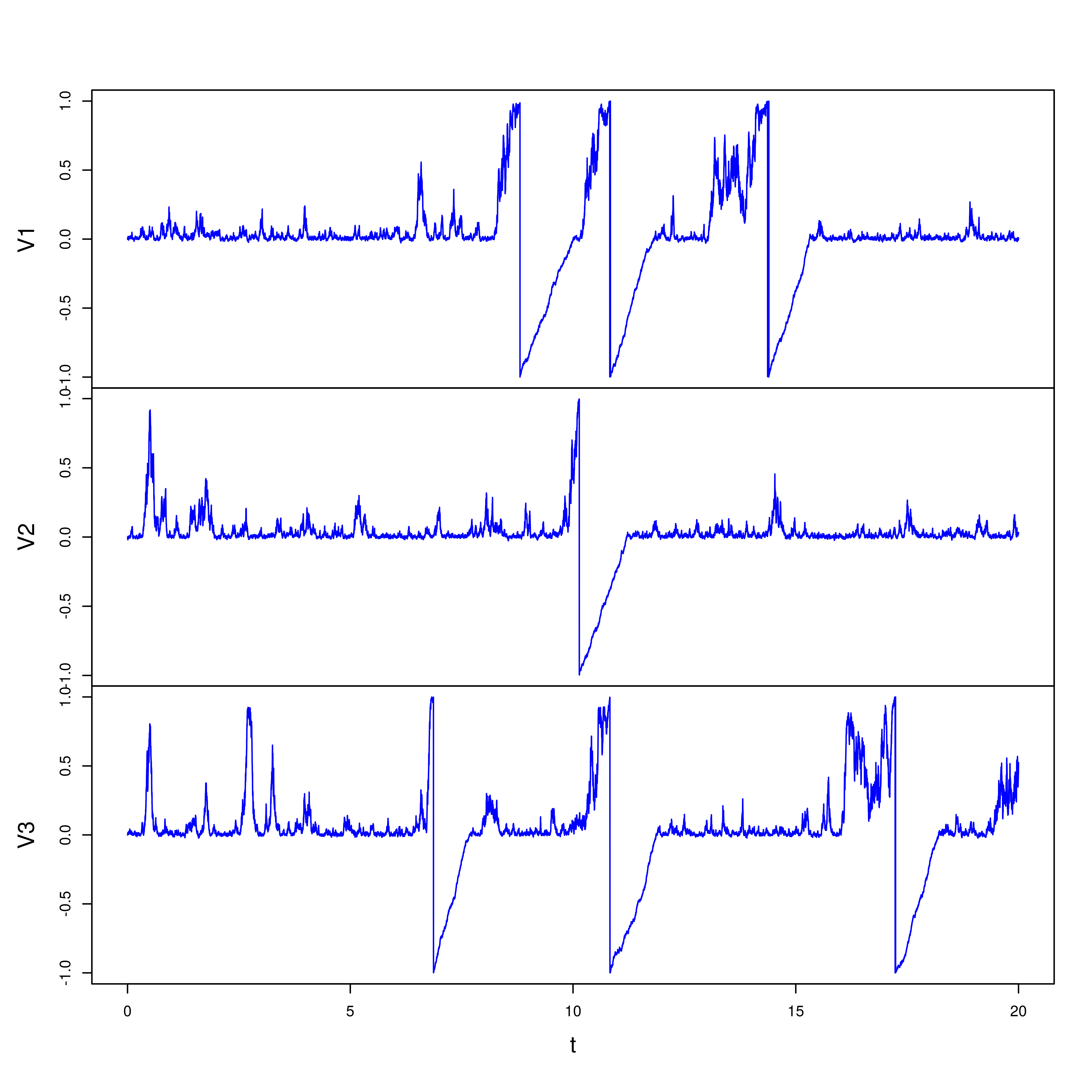}
  \caption{Solutions $V^{i,3}\pmod{2}$, $i=1,2,3$}
  \label{fig:1b}
\end{subfigure}
\caption{Simulation of the system (\ref{eq:dV}) for $3$ neurons with no spatial interaction.}

\label{fig:1}
\end{figure}
One could use as $\lambda$ any bounded function on $[0,2)$ that has a jump discontinuity in $v=1$ and is continuous elsewhere; all the results herein apply in this case with no modifications in the arguments, therefore we stick to the simple case given just above.
 \\
We will show that for each $N$ the system of equations (\ref{eq:dV}) has a unique solution $\left(V^{i,N}\right)_{i=1,\dots,N}$ with $V^{i,N}$ having continuous trajectories in $\bR$. This forces the trajectories of $V^{i,N}\pmod{2}$ to have jump discontinuities at every $t$ such that $V^{i,N}_t\in 2\bZ$. However continuity is easily restored seeing $V^{i,N}\pmod{2}$ as a process with values on the torus $\bT:=\bR/2\bZ$ (where $2\bZ$ is seen as a subgroup of translations on $\bR$). This corresponds to considering the interval $[0,2]$ with the identification $0\equiv 2$. Moreover $\bT$ is homeomorphic to $\nicefrac{1}{\pi}S^1\subset\bR^2$, the circle with radius $\nicefrac{1}{\pi}$.

On $\bT$ we consider the metric
\begin{equation}
  \label{eq:metric}
  d_\bT(v_1,v_2)=\min\left\{(v_1-v_2)\pmod{2},(v_2-v_1)\pmod{2}\right\}
\end{equation}
where on the right-had side $v_1$ and $v_2$ are seen as elements in $[0,2)\subset\bR$; this corresponds to the shortest-path (or geodesic) metric, which is the arc-length on $S^1$. This metric induces the quotient topology on $\bT$.\\
We will always consider the Euclidean metric on $D$ and endow $D\times\bT$ with the product metric, denoted by $d_{D\times\bT}$.\\
The choice to represent solutions on the torus is natural since the coefficients we introduced above are clearly $2$-periodic. To stress periodicity and also to lighten notation for $v\in\bR$ and $x,y\in D$ we define the functions
\begin{gather*}
  \lambda_2(v):=\lambda\big(v\pmod{2}\big)\ , \\
  g_2(x,v,y,w):=\theta(x,y)\ind_{[1,1+\delta]}\big(w\pmod{2}\big)\ind_{[0,1]}\big(v\pmod{2}\big)\ ,\\
  \sigma^\epsilon_2(v):=\sigma^\epsilon\big(v\pmod{2}\big)\ 
\end{gather*}
so that equation (\ref{eq:dV}) takes the more readable form
\begin{equation}
  \label{eq:dV2}
  \ud V^{i,N}_t=\lambda_2\left(V^{i,N}_t\right)\ud t+\frac{1}{N}\sum_{j=1}^{N}g_2\left({\xi^i},V^{i,N}_t,{\xi^j},V^{j,N}_t\right)\ud t+\sigma^\epsilon_2\left(V^{i,N}_t\right)\ud B^i_t\ .
\end{equation}
{Since $\bT$ is homeomorphic to $\nicefrac{1}{\pi}S^1\subset\bR^2$, one can define the Lebesgue measure on $\bT$ as the push-forward of the Lebesgue measure on $[0,2)$ through the map $t\mapsto(\cos (\pi t),\sin (\pi t))$; since $\bT$ is endowed with the quotient topology, any measure on the Borel sets of $\bT$ can be obtained in this way. Therefore we can interpret a Borel measure on $D\times\bT$ as a Borel measure on $D\times[0,2)$, and we will do so henceforth. Notice that any Borel measure on $\bT$ defines a Borel measure on the whole $\bR$ by $2$-periodic replication; we will not distinguish between the two in the sequel}.\\
{ We will show that the solution to (\ref{eq:dV2}) has a density which is $2$-periodic, thanks to the form of the coefficients; hence this density can be identified with a Borel measure on $D\times\bT$.}
\begin{figure}[!tbp]
  \centering
  \begin{subfigure}[b]{0.49\textwidth}
    \includegraphics[width=\textwidth]{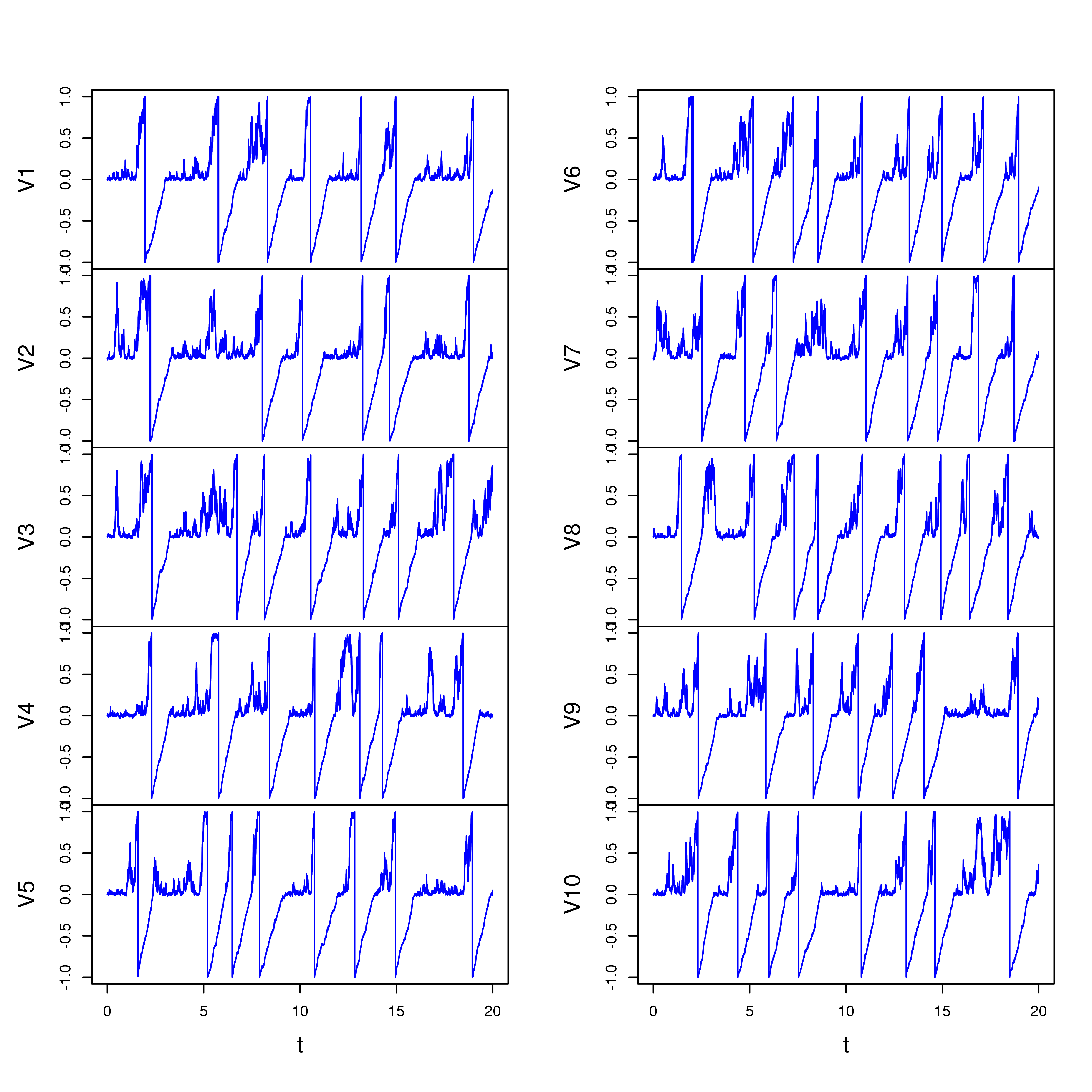}
\caption{Solutions $V^{i,10}$}
\label{fig:2a}
  \end{subfigure}
\hfill
  \begin{subfigure}[b]{0.49\textwidth}
    \includegraphics[width=\textwidth]{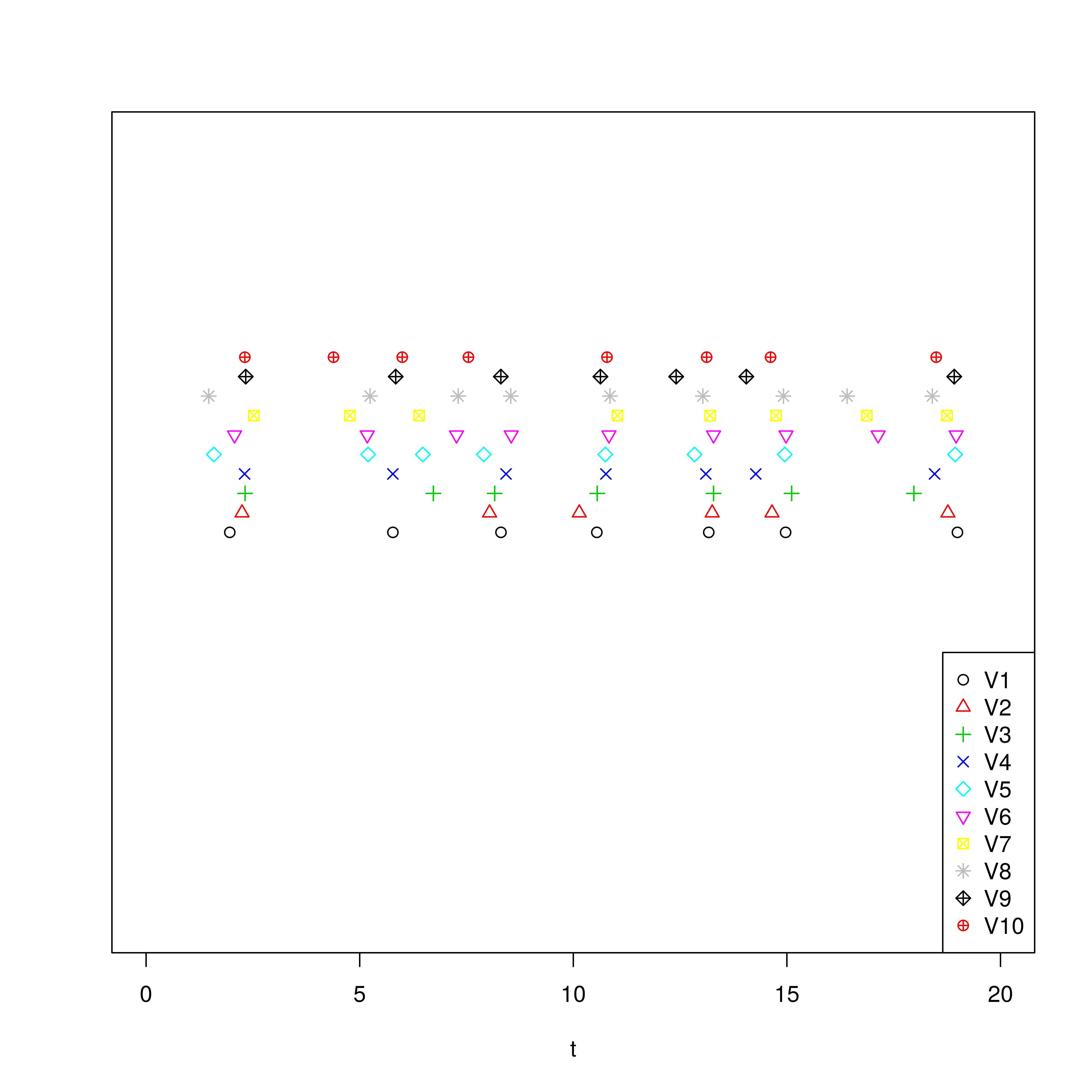}
\caption{Spikes of each neuron in time}
\label{fig:2b}
  \end{subfigure}
\label{fig:2}
\caption{Solution of system (\ref{eq:dV}) for $10$ neurons with strong interaction uniform in space. A spike from one neuron propagates to all other neurons in the network}
\end{figure}

Let $S_t^N$ denote the empirical measure
\begin{equation}
\label{eq:emp}
  S_t^N=\frac{1}{N}\sum_{i=1}^N\delta_{\left({\xi^i},V_t^{i,N}\pmod*{2}\right)}=\frac{1}{N}\sum_{i=1}^N\delta_{\left(\xi^i,V^{i,N}_t\pmod*{2}\right)}\ 
\end{equation}
and set for every $x,y\in D$ and $v\in\bR$
\begin{equation*}
  \sigma^\epsilon_2(x,v):=\sigma^\epsilon_2(v)\ ,\quad\theta(x,y,v):=\theta(x,y)\quad\text{ and }\quad \lambda_2(x,v):=\lambda_2(v)
\end{equation*}
for later use {(see for instance (\ref{eq:FP}))}. {To any function $\phi$ on $D\times\bT$ which is continuous corresponds a unique continuous function on $D\times\bR$ that is $2$-periodic with respect to its second variable, given by $(x,v)\mapsto\phi(x,v\pmod*{2})$; in the sequel we will identify these two functions and denote by $\phi$ also its $2$-periodic representation on $D\times\bR$.
With this convention, for any smooth and compactly supported function $\phi$ on $D\times\bT$ we have}
\begin{multline}
\label{eq:Ito_SN}
\ud\langle S^N_t,\phi\rangle=\ud\frac{1}{N}\sum_{i=1}^N\phi \left(X_0^i,V_t^{i,N}\right) \\
 =\frac{1}{N}\sum_{i=1}^N\partial_v\phi \left(X_0^i,V^{i,N}_t\right)\left(\lambda_2\left(V^{i,N}_t\right)+\frac{1}{N}\sum_{j=1}^Ng_2\left(X_0^i,V^{i,N}_t,X_0^j,V^{j,N}_t\right)\right)\ud t \\
 +\frac{1}{N}\sum_{i=1}^N\sigma^\epsilon_2\left(V^{i,N}_t\right)\partial_v\phi \left(X_0^i,V^{i,N}_t\right)\ud B^i_t+\frac{1}{N}\sum_{i=1}^N\frac{\sigma^\epsilon_2\left(V^{i,N}_t\right)^2}{2}\partial^2_v\phi \left(X_0^i,V^{i,N}_t\right)\ud t\\[3mm]
=\langle S^N_t,\lambda_2\partial_v\phi \rangle\ud t+\langle S^N_t,\langle S^N_t,g_2(x,v,\cdot,\cdot)\rangle\partial_v\phi \rangle\ud t +\langle S^N_t,\frac{\left(\sigma_2^\epsilon\right)^2}{2}\partial^2_v\phi \rangle\ud t+\ud M_t^{N,\phi}\ 
\end{multline}
where we use the notation
\begin{equation}
  \label{eq:notaz}
\langle S^N_t,\langle S^N_t,g_2(x,v,\cdot,\cdot)\rangle\partial_v\phi \rangle =  \int_{D\times\bT}\partial_v\phi (x,v) S^N_t(\ud x,\ud v)\int_{D\times\bT}g_2(x,v,y,w)S^N_t(\ud y,\ud w)
  \end{equation}
and
\begin{equation*}
  M^{N,\phi}_t=\int_0^t\frac{1}{N}\sum_{i=1}^N\sigma^\epsilon_2\left(V^{i,N}_s\right)\partial_v\phi \left(X^i,V^{i,N}_s\right)\ud B^i_s
\end{equation*}
is a martingale such that $\bE \big [ \sup_t\left\vert M^{N,\phi}_t\right\vert^2 \big]\to 0$ as $N\to\infty$ (due to the stochastic integrals being uncorrelated).\\
If we suppose that the sequence of random measures $S^N_t(\ud x,\ud v)$ converges in probability (in a suitable space) to a probability measure $\rho_t(x,v)\ud x\ud v$ on $D\times\bT$, then, heuristically, a passage to the limit in $N$ suggests that $\rho_t$ solves weakly the partial differential equation of Fokker-Planck type
\begin{multline}
\label{eq:FP}
  \frac{\partial}{\partial t}\rho_t(x,v)=\frac{1}{2}\frac{\partial^2}{\partial v^2}\left(\left(\sigma_2^\epsilon\right)^2\rho_t\right)(x,v)-\frac{\partial}{\partial v}\left(\lambda_2\rho_t\right)(x,v)\\-\frac{\partial}{\partial v}\left(\rho_t\int g_2(\cdot,\cdot,y,w)\rho_t(y,w)\ud y\ud w\right)(x,v)\ .
\end{multline}
In the sequel we will prove rigorously a similar assertion involving measures $\mu_t$ instead of densities $\rho_t$.
\begin{figure}[!]
  \centering
   \begin{subfigure}[b]{0.32\textwidth}
 \includegraphics[trim={8cm 0 8cm 0},clip,width=\textwidth]{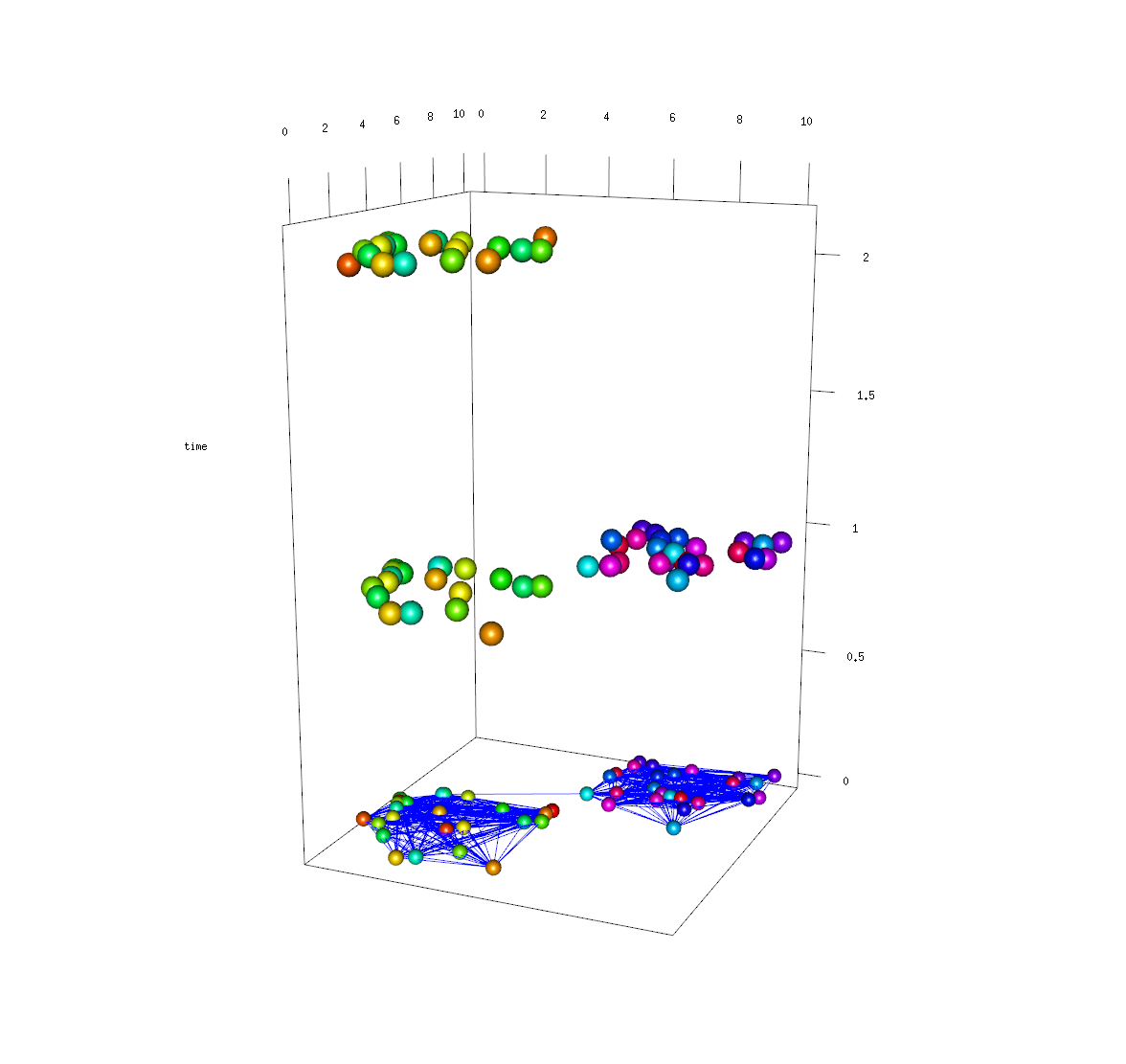}
 \label{fig:3a}    
   \end{subfigure}
\hfill
   \begin{subfigure}[b]{0.32\textwidth}
 \includegraphics[trim={8cm 0 8cm 0},clip,width=\textwidth]{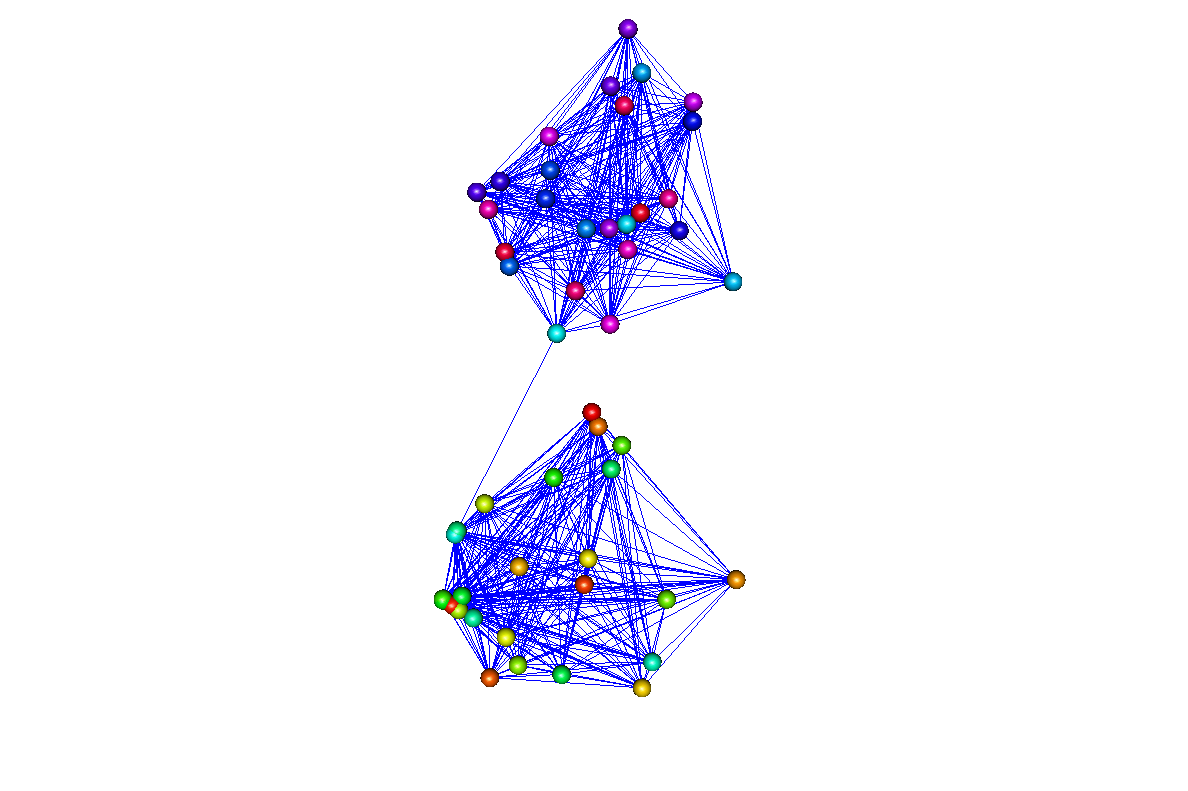}
 \label{fig:3ab}
 \end{subfigure}
\hfill
 \begin{subfigure}[b]{0.32\textwidth}
 \includegraphics[trim={9cm 0 8cm 0},clip,width=\textwidth]{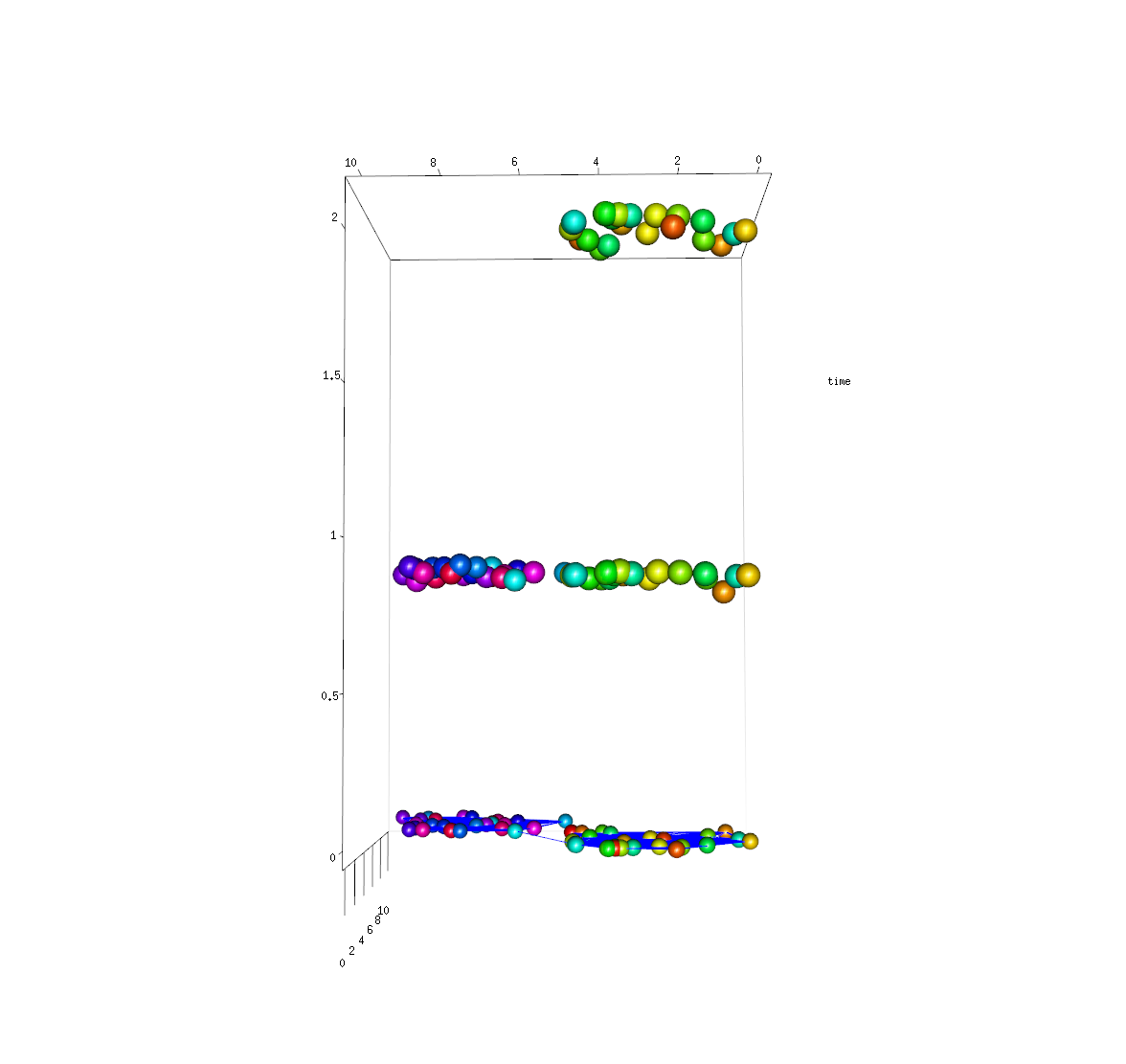}
 \label{fig:3c}    
   \end{subfigure}
%   \begin{subfigure}[b]{0.49\textwidth}
% \includegraphics[width=\textwidth]{due_D.png}
% \label{fig:3d}    
%   \end{subfigure}
\label{fig:3}
\caption{A network made of two subnetworks with localized strong interaction. Each blue line corresponds to the presence of interaction; only one neuron of the first subnetwork interacts with a single neuron of the second one. The vertical axis represents time; the network is drawn on the $t=0$ plane and to each spike of a neuron corresponds a sphere above it, with the same color as the neuron. Near time $t=1$ many neurons in the first network spike and the signal is propagated to the second network, while at time $t=2$ the signal does not propagate to the second network.}
\end{figure}
\subsection{Main results}
The main aim of the paper is to show that the empirical measure actually converges in a weak sense to a limiting probability measure $\mu_t(\ud x,\ud v)$ such that the marginal with respect to $v$ has a $L^2$-density and which is the unique solution to the above PDE (\ref{eq:FP}). This is the content of Theorem \ref{thm:FP}.\\
The paper is organized as follows. In Section \ref{section:SDE} we will prove strong well-posedness for the system of SDEs (\ref{eq:dV}); a modification of the standard theory for finite-dimensional SDEs with bounded and measurable drift is needed here to deal with the dependence of the equations on the random variables $X_0^i$. In Section \ref{section:uniqueness} we define a weak measure-valued solution and show that the PDE (\ref{eq:FP}) has at most one such solution.
\\
To show existence of a solution to the PDE we first prove that the laws $\bQ^N$ of the empirical measures of $(X^i,V^{i,N}\pmod*{2})$ (see (\ref{eq:emp})) are tight as probability measures on the space of continuous measure-valued functions of time (Section \ref{section:Q_tight}). Then we prove that any limit point $\bQ$ of $\bQ^N$ gives full measure to the set of functions with values that are continuous measures with marginal with respect to $v$ having a $L^2$-density, that $\bQ$ is supported by the set of weak measure-valued solutions to the Fokker-Planck PDE and that actually all the sequence $\bQ^N$ converges to the same limit. This provides existence of a solution and is discussed in Section \ref{section:PDE_existence}.\\
Section \ref{section:vlasov} briefly shows that well-posedness of the Fokker-Planck PDE implies existence of a unique strong solution to the McKean-Vlasov SDE associated with the particle system. We conclude with an appendix giving some immediate generalizations of our results together with some indications on how the proofs have to be adapted to this more general setting.
\section{The system of particles}
\label{section:SDE}
Consider independent Brownian motions $B^i$, $i\in\bN$ and assume that the random variables $\eta^i$ introduced in the previous section are i.i.d. and independent from all $B^i$; denote by $(\sF^0_t)_t$ the filtration $\sF^0_t=\sigma\left(B^i_s,0\leq s\leq t,i\in\bN\right)\vee\sigma(\eta^i,i\in\bN)$ augmented with the $\bP$-null sets.\\
We also denote by ${\cal G}$ the $\sigma$-algebra $\sigma\left(\xi^i,i\in\bN\right)$ and assume that for any $t \ge 0$, ${\mathcal G}$ and ${\mathcal F}_t^0$ are independent.\\
Finally we introduce the filtration $(\sF_t)$  where $\sF_t$ is the completion of $\sF_t^0\vee\sG$.\\
%$\sigma\left(B^i_s,0\leq s\leq t,i\in\bN\right)\vee\sigma\left(\xi^i,i\in\bN\right)$.

Let us write our system of equations in vector form: we fix $N\in\bN$ and introduce, for the variables $\mathbf v=(v^{1},\dots,v^{N})^\top\in\bR^N$, $\mathbf x=(x^{1},\dots,x^{N})^\top\in D ^N$, the functions 
\begin{gather*}
  \bar\lambda_2:\bR^N\to\bR^N\\
  \bar\lambda_2(\mathbf{v})=\left(\lambda_2(v^{1}),\dots,\lambda_2(v^{N})\right)^\top\ ,
\end{gather*}
\begin{gather*}
  \bar g_2:\left(D\times\bR\right)^{N+1}\to\bR^N\\
  \bar g_2(\mathbf x,\mathbf v,y,w)=\left(g_2(x^{1},v^{1},y,w),\dots,g_2(x^{N},v^{N},y,w)\right)^\top,
\end{gather*}
\begin{gather*}
  \bar\sigma_2^\epsilon:\bR^N\to\bR^{N\times N}\\
  \bar\sigma_2^\epsilon(\mathbf v)=\diag\left(\sigma_2^\epsilon(v^{1}),\dots,\sigma_2^\epsilon(v^{N})\right).
\end{gather*}
Setting $\Xi=\left(\xi^1,\dots,\xi^N\right)$, $\Psi=\left(\eta^1,\dots,\eta^N\right)$, we want to show existence and uniqueness of strong solutions to
\begin{equation}
  \label{eq:system}
  \begin{cases}
    \ud \mathbf V_t=\bar\lambda_2\left( \mathbf V_t\right)\ud t+\frac{1}{N}\sum_{j=1}^{N}\bar g_2\left(\Xi,\mathbf V_t,\Xi^j,V^{j}_t\right)\ud t+\bar\sigma^\epsilon_2\left(\mathbf V_t\right)\ud \mathbf B_t\\
    \mathbf V_0=\Psi\ .
  \end{cases}
\end{equation}
The classical reference for existence of a strong solution for SDEs with bounded measurable drift is \cite{V81}. However, the results proved therein do not apply directly to equation (\ref{eq:system}), { because they do not guarantee the measurable dependence of the solution $V$ on the stochastic parameter $\Xi$.}  Therefore we introduce the following definition.\\

Let us denote by $\sS$ the Banach space of all continuous paths from $[0,T]$ into $\bR^N$ endowed with the supremum norm $\lvert\cdot \rvert_\sS$. We also denote by $\nu_N$ the law of the random variable $\Xi$ on the Borel $\sigma$-algebra of $D^N$.

\begin{definition}[Strong solution]
\label{def:strongsolution}
 A \emph{strong solution} to (\ref{eq:system}) is a family of continuous $\bR^N$-valued ${\cal F}_t^0$-adapted processes $({\bf V}^{{\bf x}}_t)$, ${\bf x} \in D^N$, such that
\begin{enumerate}[label=$($\roman{*}$)$]
\item the mapping $({\bf x},\omega) \mapsto {\bf V}_{\cdot}^{{\bf x}}(\omega) \in \sS$ is measurable from $(D^N \times \Omega, {\cal B}(D^N) \times {\cal F}) $ to $(\sS, {\cal B}(\sS))$;

\item for $\nu_N$-almost every ${\bf x} \in D^N$, $({\bf V}_t^{{\bf x}})$ satisfies equation (\ref{eq:system}) in the strong sense when $\Xi={\bf x}$.
\end{enumerate}
 \end{definition}
The above definition is motivated by the fact that if $({\bf V}^{{\bf x}}_t)$ is a strong solution, then the ${\cal F}_t$-adapted process $({\bf V_t^{\Xi}})$ satisfies equation (\ref{eq:system}) $\bP$-almost surely for any $t\in[0,T]$. In fact, for $\Xi$ random variable as above,  the process $({\bf V}^{\Xi}_t)$ is well-defined with values in $\bR^N$, has  continuous paths and is ${\mathcal F}_t$-adapted.\\
To prove that $\left(\mathbf {V}^{\Xi}_t\right)$ satisfies (\ref{eq:system}) it is enough to compute, using conditional expectation with respect to $\cal G$ and independence,
\begin{align*}
&\bE  \left \vert \Psi + \int_0^t \bar\lambda_2\left( \mathbf { V}^{\Xi}_r\right)\ud r +
\frac{1}{N}\sum_{j=1}^{N} \int_0^t \bar g_2\left(\Xi,\mathbf {V}^{\Xi}_r,\Xi^j,{V}^{\Xi,j}_r\right)\ud r+ \int_0^t \bar\sigma^\epsilon_2\left(\mathbf {V}^{\Xi}_r\right)
\ud \mathbf B_r - {\bf V}_t^{\Xi} \right\vert\\
&=\bE \bE \left [ \left \vert \Psi + \int_0^t \bar\lambda_2\left(\mathbf{V}^{\Xi}_r\right)\ud r+\frac{1}{N}\sum_{j=1}^{N} \int_0^t \bar g_2\left(\Xi,\mathbf {V}^{\Xi}_r,\Xi^j,V^{\Xi,j}_r\right)\ud r+ \int_0^t \bar\sigma^\epsilon_2\left(\mathbf {V}^{\Xi}_r\right)\ud \mathbf B_r - {\bf V}_t^{\Xi} \right\vert \ {\cal G}\right]\\
&=\bE  \bE \left [ \left \vert \Psi + \int_0^t \bar\lambda_2\left( \mathbf {V}^{\mathbf x}_r\right)\ud r+\frac{1}{N}\sum_{j=1}^{N} \int_0^t \bar g_2\left(\mathbf x,\mathbf {V}^{\mathbf x}_r,x^j,{V}^{{\mathbf x},j}_r\right)\ud r+ \int_0^t \bar\sigma^\epsilon_2\left(\mathbf {V}^{\mathbf x}_r\right)\ud \mathbf B_r - {\bf V}_t^{\mathbf x}\right\vert_{{\bf x} = \Xi}\right]\\
&=0.
\end{align*}

\begin{theorem}
\label{thm:SDEE1}
  For every $N\in\bN$ there exists a strong solution to  (\ref{eq:dV2}). Two strong solutions on the same probability space associated to the same initial condition $\Psi$ are indistinguishable for $\nu_N$-almost every ${\bf x}\in D^N$.
\end{theorem}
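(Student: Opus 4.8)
The idea is to freeze the spatial parameter $\mathbf x\in D^N$, to solve the resulting finite-dimensional SDE with bounded measurable drift and uniformly non-degenerate Lipschitz diffusion by the classical theory of \cite{V81}, and then to build a version of the solution family $(\mathbf V^{\mathbf x})_{\mathbf x\in D^N}$ that is jointly measurable in $(\mathbf x,\omega)$ --- the only point of Definition \ref{def:strongsolution} not already supplied by \cite{V81}. I would first record the relevant structure: for $\mathbf x=(x^1,\dots,x^N)\in D^N$ set $b_{\mathbf x}(\mathbf v):=\bar\lambda_2(\mathbf v)+\frac1N\sum_{j=1}^N\bar g_2(\mathbf x,\mathbf v,x^j,v^j)$, $\mathbf v\in\bR^N$. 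Since $\theta$ is bounded and $\hat\lambda<\infty$ we have $\sup_{\mathbf x\in D^N}\sup_{\mathbf v\in\bR^N}|b_{\mathbf x}(\mathbf v)|\le C$ with $C$ independent of $\mathbf x$; moreover $b_{\mathbf x}$ is Borel in $\mathbf v$ for each $\mathbf x$ and, the indicator factors in $\bar g_2$ depending on $\mathbf v$ alone while $\theta$ is continuous, $\mathbf x\mapsto b_{\mathbf x}(\mathbf v)$ is continuous for each $\mathbf v$, so $(\mathbf x,\mathbf v)\mapsto b_{\mathbf x}(\mathbf v)$ is jointly Borel. The diffusion coefficient $\bar\sigma^\epsilon_2$ is diagonal, does not depend on $\mathbf x$, has entries whose $2$-periodic extensions are $C^1_b$ on $\bR$ (hence globally Lipschitz) by the boundary conditions imposed on $\sigma^\epsilon$, and is uniformly non-degenerate since $\sigma^\epsilon\ge\sqrt{2\epsilon}>0$. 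Thus for each fixed $\mathbf x$, equation (\ref{eq:system}) with $\Xi\equiv\mathbf x$, driven by the given $\mathbf B$ with the $\sF_0$-measurable datum $\Psi$, is covered by \cite{V81}: it has a strong solution $\mathbf V^{\mathbf x}$, unique in the pathwise sense.

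For the joint measurability I would mollify the drift in $\mathbf v$ only, $b^n_{\mathbf x}:=b_{\mathbf x}*\varrho_n$ with $\varrho_n$ a mollifier on $\bR^N$, so that $b^n_{\mathbf x}$ is smooth and Lipschitz in $\mathbf v$ for each $n$, still satisfies $\sup_{\mathbf x,\mathbf v}|b^n_{\mathbf x}(\mathbf v)|\le C$, and $\mathbf x\mapsto b^n_{\mathbf x}$ is continuous. For each $n$ the SDE (\ref{eq:system}) with drift $b^n$ has a unique strong solution $\mathbf V^{\mathbf x,n}$, built by Picard iteration; since the coefficients are globally Lipschitz in $\mathbf v$ and continuous in $\mathbf x$, each Picard iterate, hence its limit, is continuous from $D^N$ into $L^2(\Omega;\sS)$ and therefore admits a jointly Borel representative $(\mathbf x,\omega)\mapsto\mathbf V^{\mathbf x,n}_\cdot(\omega)$. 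Invoking the quantitative estimates underlying \cite{V81} (Krylov's $L^p$-estimates together with Zvonkin's transformation), whose constants depend only on $C$, on the ellipticity constant and on the Lipschitz constant of $\bar\sigma^\epsilon_2$ --- all independent of $\mathbf x$ --- I would obtain $\lim_{n\to\infty}\sup_{\mathbf x\in D^N}\bE\big[\sup_{t\in[0,T]}|\mathbf V^{\mathbf x,n}_t-\mathbf V^{\mathbf x}_t|^2\big]=0$. Hence $(\mathbf x,\omega)\mapsto\mathbf V^{\mathbf x}_\cdot(\omega)$ is, as an $L^2(D^N\times\Omega,\nu_N\otimes\bP;\sS)$-limit of jointly Borel maps, jointly measurable up to modification; extracting an a.s.\ convergent subsequence and passing to the limit in the integral form of (\ref{eq:system}) (the drift term being controlled by Krylov's estimate applied to $\mathbf V^{\mathbf x,n}$), one checks that for $\nu_N$-almost every $\mathbf x$ this modification solves (\ref{eq:system}) with $\Xi=\mathbf x$. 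This gives properties (i) and (ii) of Definition \ref{def:strongsolution}.

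The indistinguishability assertion is then immediate: if $(\mathbf V^{\mathbf x})$ and $(\widetilde{\mathbf V}^{\mathbf x})$ are two strong solutions with the same $\Psi$, then for $\nu_N$-almost every $\mathbf x$ both satisfy (\ref{eq:system}) with $\Xi=\mathbf x$ in the strong sense, and pathwise uniqueness from \cite{V81} forces them to be indistinguishable for those $\mathbf x$. The genuinely delicate point is the joint measurability in $\mathbf x$: \cite{V81} yields, for each $\mathbf x$ separately, a solution defined only modulo a $\bP$-null set, and these null sets need not assemble measurably in $\mathbf x$; the approximation scheme above is designed precisely to circumvent this, exhibiting $\mathbf V^{\mathbf x}$ as a limit --- uniform in $\mathbf x$ --- of solutions of Lipschitz equations, for which joint measurability (indeed continuity in $\mathbf x$) is classical. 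A secondary care point is making the stability estimate genuinely uniform in $\mathbf x$, which is why the uniform bound on $b_{\mathbf x}$ and the $\mathbf x$-independence of $\bar\sigma^\epsilon_2$ are emphasized above.
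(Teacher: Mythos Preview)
Your overall architecture matches the paper's: freeze $\mathbf x$, invoke Veretennikov for existence and pathwise uniqueness, and manufacture joint measurability in $(\mathbf x,\omega)$ by approximating with a scheme whose solutions are manifestly jointly measurable, then passing to the limit in $L^2(D^N\times\Omega,\nu_N\otimes\bP;\sS)$. The difference is in the approximation scheme. The paper uses Euler time-discretizations $\mathbf V^{\mathbf x,n}$ and appeals to Theorem~2.8 of \cite{GK96}, which gives convergence in probability of the Euler scheme to the strong solution for each fixed $\mathbf x$; this is upgraded to $L^2(\Omega;\sS)$ by uniform $L^p$ moment bounds, and then to $L^2(D^N\times\Omega;\sS)$ by dominated convergence over $\mathbf x$. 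You instead mollify the drift in $\mathbf v$ and invoke Zvonkin-type stability estimates.

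Two remarks on your route. First, you claim $\sup_{\mathbf x}\bE\big[\lvert\mathbf V^{\mathbf x,n}-\mathbf V^{\mathbf x}\rvert_\sS^2\big]\to0$ by ``the quantitative estimates underlying \cite{V81}''. This is plausible given the specific structure here---the $\mathbf x$-dependence of $b_{\mathbf x}$ enters only through the bounded continuous factors $\theta(x^i,x^j)$, so $b^n_{\mathbf x}\to b_{\mathbf x}$ in $L^p_{\mathrm{loc}}$ uniformly in $\mathbf x$, and the Zvonkin PDE solution inherits uniform bounds---but it is not a statement one can simply cite from \cite{V81}; it would need to be spelled out. Second, and more to the point, you do not actually need uniformity in $\mathbf x$: pointwise (in $\mathbf x$) $L^2(\Omega;\sS)$-convergence plus dominated convergence over $(D^N,\nu_N)$ already gives convergence in $L^2(D^N\times\Omega;\sS)$, exactly as the paper argues. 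So the stronger uniform claim is a detour.

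What each approach buys: the paper's Euler route has the advantage of a clean off-the-shelf reference (\cite{GK96}) for the convergence step, at the cost of only obtaining measurability in $\mathbf x$. Your mollification route, if the uniform stability is carried through, would in fact yield continuity of $\mathbf x\mapsto\mathbf V^{\mathbf x}$ in $L^2(\Omega;\sS)$, which is more than needed but pleasant. Either way the uniqueness half is identical, via pathwise uniqueness from \cite{V81}.
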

\begin{proof}
{\it Existence.} First fix 
%$\mathbf x\in\left(\bR^3\right)^N$; 
${\bf x} \in D^N$; the SDE
\begin{equation}
  \label{eq:system3}
  \begin{cases}
    \ud \mathbf V^{\mathbf x}_t=\bar\lambda_2\left(\mathbf V^{\mathbf x}_t\right)\ud t+\frac{1}{N}\sum_{j=1}^{N}\bar g_2\left(\mathbf x,\mathbf V^{\mathbf x}_t,x^j,V^{{\mathbf x},j}_t\right)\ud t+\bar\sigma^\epsilon_2\left(\mathbf V^{\mathbf x}_t\right)\ud \mathbf B_t\\
    \mathbf V^{\mathbf x}_0=\Psi\ .
  \end{cases}
\end{equation}
admits a unique strong solution $\mathbf V^{\mathbf x}$ by the results proved in \cite{V81}. We now clarify the measurability of ${\bf V^x}$ with respect to ${\bf x}$. One difficulty is that the proof of the main well-posedness results in \cite{V81} is based on the Yamada-Watanabe theorem and is indeed  abstract and non-contructive. This is why to prove such a measurability property we follow the approach in \cite{GK96}. Fix a sequence of partitions $\left\{\pi^n\right\}_{n\in\bN}$ of $[0,T]$, where each $\pi^n$ is given by points $0=t_0^n<t_1^n<\dots<t_n^n=T$, set $\kappa^n(t)=\sum_{i=0}^{n-1}t_i^n\ind_{[t_i^n,t_{i+1}^n)}(t)$ and consider Euler's approximations to equation (\ref{eq:system3}) given by
\begin{equation}
  \label{eq:system_approx}
  \begin{cases}
    \ud \mathbf V^{\mathbf x,n}_t=\bar\lambda_2\left( \mathbf V^{\mathbf x,n}(\kappa^n(t))\right)\ud t+\frac{1}{N}\sum_{j=1}^{N}\bar g_2\left(\mathbf x,\mathbf V^{\mathbf x,n}(\kappa^n(t)),x^j,V^{{\mathbf x},n,j}(\kappa^n(t))\right)\ud t\\\phantom{aaaaaaaaaaaaaaaaaaaaaaaaaaaaaaaaaaaaaaaaaaaaaa}+\bar\sigma^\epsilon_2\left(\mathbf V^{\mathbf x,n}(\kappa^n(t))\right)\ud \mathbf B_t,\\
    \mathbf V^{\mathbf x,n}_0=\Psi\ .
  \end{cases}
\end{equation}
In the next steps,  we will  use that, for any $n \ge 1$, the  processes ${\bf V^{x,n}}$  enjoy all the measurability properties we need.\\
By Theorem 2.8 in \cite{GK96}
we know that, for any ${\bf x} \in D^N$ and $\delta>0$
 \begin{equation}
\label{c11}
 \bP \left(\left\vert{\bf V}^{{\bf x},n}- {\bf V}^{\bf x}\right\vert_\sS > \delta\right) \to 0, \;\; \text{as }n \to \infty. 
 \end{equation} 
Note that, for any $p\geq 2$ (using the boundedness of the coefficients of the SDEs) there exists $C_p>0$ (independent of $n$ and $\bf{x}\in D$) such that, for any $n \ge 1$, $\bE \left[\left\vert {\bf V}^{{\bf x},n}- {\bf V}^{\bf x}\right\vert^{p}_\sS\right]$ $\le C_p$. Writing 
\begin{multline*}
\bE \left\vert{\bf V}^{{\bf x},n}- {\bf V}^{\bf x}\right\vert^2_\sS \\ = \bE \left[\left\vert{\bf V}^{{\bf x},n}- {\bf V}^{\bf x}\right\vert^2_\sS \, \ind_{ \{  \vert{\bf V}^{{\bf x},n}- {\bf V}^{\bf x}\vert_\sS > \delta \} }\right]+ \bE \left[\left\vert{\bf V}^{{\bf x},n}- {\bf V}^{\bf x}\right\vert^2_\sS \, \ind_{ \{  \vert{\bf V}^{{\bf x},n}- {\bf V}^{\bf x}\vert_\sS \le  \delta \} }\right]
\end{multline*}
and using also the H\"older inequality, we easily deduce that, for any ${\bf x} \in D^N$, 
 \begin{equation} \label{c112}
\bE\left[\left\vert{\bf V}^{{\bf x},n}- {\bf V}^{\bf x}\right\vert_\sS^2 \right] \to 0, \;\; \text{as }n \to \infty\ . 
\end{equation} 
Hence, for any ${\bf x} \in D^N$, ${\bf V}^{{\bf x},n}$ converges to ${\bf V^{x}}$ in the Banach space $L^2(\Omega; \sS)$ and so the mapping ${\bf x} \mapsto {\bf V}^{{\bf x}} \in L^2(\Omega; \sS)$ is Borel measurable on $\left(D^N,\sB(D^N)\right)$. By the dominated convergence theorem we infer
\begin{equation} \label{de1}
\int_{D^N} \bE\left[\left\vert{\bf V}^{{\bf x},n}- {\bf V}^{\bf x}\right\vert_\sS^2 \right]\nu_N(\ud x) \to 0, \;\; \text{as }n \to \infty\ ,
\end{equation} 
i.e., $({\bf V}^{{\bf \cdot} ,n})$ converges to ${\bf V^{\cdot}}$ in $\sZ =L^2 \left((D^N,\nu_N); L^2(\Omega; \sS)\right)$. It follows that $({\bf V}^{{\bf x},n})$ is a Cauchy sequence in $\sZ$. Using that, for any $n, m \ge 1,$ 
\begin{equation} \label{w2}
\int_{D^N} \bE\left[\left\vert{\bf V}^{{\bf x},n}- {\bf V}^{{\bf x}, m}\right\vert_\sS^2 \right]\nu_N(\ud x) = \int_{D^{N} \times \Omega}  \left\vert{\bf V}^{{\bf x},n}(\omega)- {\bf V}^{{\bf x}, m}(\omega)\right\vert_\sS^2 \nu_N(\ud x) \bP(\ud \omega)\ ,
\end{equation} 
we get that $({\bf V}^{{\bf \cdot} ,n})$ converges to some ${\bf \tilde V^{\cdot}}$ in  $L^2 \left((D^N \times \Omega,\nu_N\times\bP); \sS\right)$. In particular, ${\bf \tilde V^{\cdot}}$ is measurable on $(D^N \times \Omega, {\mathcal B}(D^N) \times {\mathcal F})$ with values in $\sS$.\\
It follows that, for a.e. ${\bf x} \in D^N$, we have ${\bf \tilde V^{x}} = {\bf V^{x }}$ in $\sS$, $\bP$-a.s. (we have obtained a version of the strong solution which has the required measurability properties with respect to ${\bf x}$).

\smallskip
\noindent {\it Uniqueness.}  It follows directly from the celebrated Veretennikov result.
\end{proof}

\begin{remark}
\label{rem:sigma}
To show existence and uniqueness of a solution one could weaken the assumptions on the regularity of $\sigma^\epsilon$, similarly to what is done in the references \cite{V81} and \cite{GK96}. What is needed above is that pathwise uniqueness holds for equation (\ref{eq:system3}), and there are many well-known conditions assuring that this happens. However at a later stage in the paper (Section \ref{section:PDE_existence}) we will need to assume that $\sigma^\epsilon$ is differentiable with bounded derivative. This does not seem to be a limitation on the model, since there is no reason to assume that the diffusion coefficient be particularly rough.
\end{remark}
\section{The limit PDE: uniqueness of measure-valued solutions}
\label{section:uniqueness}
Let $\mathrm{Pr}(D\times\bT)$ be the space of Borel probability measures over $D\times\bT$. Let $\mathrm{Pr}_1(D\times\bT)\subset\mathrm{Pr}(D\times\bT)$ be the space of probability measures over $D\times\bT$ with finite first moment, endowed with the 1-Wasserstein metric $\sW_1$.\\
Suppose as above that the empirical measures $S^N_t$ converge in a weak sense to a probability measure $\mu_t$ on $D\times\bT$. Without assuming that $\mu_t$ has a density, we expect that it solve the PDE

\begin{equation}
\label{eq:FP2}
  \frac{\partial}{\partial t}\mu_t=\frac{1}{2}\frac{\partial^2}{\partial v^2}\left(\left(\sigma_2^\epsilon\right)^2\mu_t\right)-\frac{\partial}{\partial v}\left(\lambda_2\mu_t\right)-\frac{\partial}{\partial v}\left(\mu_t\int g_2(\cdot,\cdot,y,w)\mu_t(\ud y,\ud w)\right)\ ,
\end{equation}
with initial condition $\nu\times\tilde\rho_0\rL_{\bT}\ $, meaning that $\mu_0(\ud x,\ud v)=\nu(\ud x)\tilde\rho_0(v)\rL_\bT(\ud v)$.
Fix $T>0$; we will denote by $\rC$ the space
\begin{equation*}
\rC:=C\left([0,T];\mathrm{Pr}_1(D\times\bT)\right)
\end{equation*}
and for a measure $\zeta\in\mathrm{Pr}(D\times\bT)$ we will adopt the notation
\begin{equation}
\label{eq:notation_b}
b(\zeta)(x,v):=\lambda_2(v)+\int_{D\times\bT}g_2(x,v,y,w)\zeta(\ud y,\ud w),\;\; x\in D,\, v\in\bT,
\end{equation}
throughout the rest of the paper.\\
In the sequel we will often use  that  if $g: \bR \to \bR$ is 2-periodic and differentiable on $\bR$ then its derivative is also $2$-periodic (thus $g$ can be identified with a differentiable function on $\bT$). Recall the Banach space $B_b(D\times\bT)$ consisting of all Borel and bounded functions $f: D \times \bT \to \bR$ endowed with the supremum norm $\| \cdot \|_{\infty}$. We will also consider $C_b(D \times \bT)\subset B_b(D\times\bT)$ consisting of all bounded continuous functions. We introduce the space of test functions
  \begin{equation*}
    \sT\subset C_b( D \times \bT)
  \end{equation*}
defined as the space of all $\phi \in C_b( D \times \bT)$ such that there exist the partial derivatives $\partial_v\phi$ and $\partial_{v}^2 \phi$ which both belong to $C_b(D \times \bT)$.
\begin{definition}
We say that $\mu\in \rC$ is a weak measure-valued solution of the nonlinear Fokker-Planck equation (\ref{eq:FP2}), with initial condition $\mu^{0}\in \mathrm{Pr}_{1}\left(D\times\bT\right)$, if
\begin{equation}
\label{FP weak}
\left\langle \mu_{t},\phi\right\rangle =\left\langle \mu^{0},\phi\right\rangle+\int_{0}^{t}\left\langle \mu_{s},b(\mu_s)  \partial_{v}\phi\right\rangle \ud s+\int_{0}^{t}\left\langle \mu_{s},\frac{\left(\sigma_{2}^{\epsilon}\right)^{2}}{2}\partial_{v}^{2}\phi\right\rangle\ud s, 
\end{equation}
for every test function $\phi \in {\cal T}$.
\end{definition}

Consider now the total variation distance on Borel probability measures on $D\times\bT$
\begin{align*}
  d_{\mathrm{TV}}(\nu^1,\nu^2):&=\sup\left\{\left\vert\langle\nu^1,\phi\rangle-\langle\nu^2,\phi\rangle\right\vert\colon\phi\in B_b(D\times\bT),\VV\phi\VV_\infty\leq 1\right\}\\
                 &=\sup\left\{\left\vert\langle\nu^1,\phi\rangle-\langle\nu^2,\phi\rangle\right\vert\colon\phi\in C_b(D\times\bT),\VV\phi\VV_\infty\leq 1\right\}\ .
\end{align*}
\begin{remark} \label{maa} Let $\mu^1,\mu^2\in \rC$. One can show that the mapping 
$$
t \mapsto  d_{\mathrm{TV}}(\mu^1_t,\mu^2_t)   \;\; \text{ is  Borel and bounded on $[0,T]$.}
$$
To this purpose we first remark that, for given probability measures $\nu^1$ and $\nu^2$ on Borel sets of $D \times \bT$, one has
 \begin{equation} \label{ci1}
 d_{\mathrm{TV}}(\nu^1,\nu^2)=\sup\left\{\left\vert\langle\nu^1,\phi\rangle-\langle\nu^2,\phi\rangle\right\vert\colon\phi\in C_c^{\infty}(D\times\bT),\VV\phi\VV_\infty\leq 1\right\}\ .
\end{equation}
(recall that $\phi \in C_c^{\infty}(D\times\bT)$ if $\phi \in C^{\infty}(D\times\bT) $ and has compact support).
 As before  if $f: D \times \bT \to \bR $ we identify the function $(x,v)\mapsto f(x,v\pmod*{2}) \in \bR$ defined on $D \times \bR$ with $f$.

To prove \eqref{ci1} let  $f \in C_b (D\times\bT)$ with $\VV f\VV_\infty\leq 1$, by truncating $f$  and by considering standard mollifiers (defined on $\bR^4$) we can find a sequence $(f_n) \subset C_c^{\infty}(D\times\bT)$ such that 
$\| f_n\|_{\infty} \le 1$ and $f_n(z) \to f(z)$, as $n \to \infty$, for any $z = (x, v) \in D \times \bT$. By using 
%the inequality
$$
\left\vert\langle\nu^1, f_n\rangle-\langle\nu^2,f_n \rangle\right\vert
\le
\sup\left\{\left\vert\langle\nu^1,\phi\rangle-\langle\nu^2,\phi\rangle\right\vert\colon\phi\in C_c^{\infty},\VV\phi\VV_\infty\leq 1\right\},\;\; n \ge 1,
$$
and the Lebesgue convergence theorem we obtain that the previous formula holds even when $f_n$ is replaced by $f$;  this leads to \eqref{ci1}.

Then we show that there exists a countable
set $K_\infty \subset C_c^{\infty}$ such
 that for any $f \in C_c^{\infty} $,
we can find  a sequence $(f_k) \subset K_\infty$ satisfying
 \begin{equation} \label{den1}
 \lim_{k \to \infty} \| f - f_k \|_{\infty} =0.
\end{equation}
As a simple consequence we get that  
\begin{equation} \label{den}
 d_{\mathrm{TV}}(\nu^1,\nu^2)=\sup\left\{\left\vert\langle\nu^1,\phi\rangle-\langle\nu^2,\phi\rangle\right\vert\colon\phi\in K_\infty,\VV\phi\VV_\infty\leq 1\right\}\ .
 \end{equation} 
  To prove  assertion \eqref{den1} set
$F_n  = \{ f \in C_c^{\infty}$ with support$(f)$ $ \subset B_n\}$ where   $B_n = \{ (x,v)  \in D \times \bT \, : \;   \dist(x, \partial D) \ge 1/n \;\ \text{ and }|x| \le n \}$.  Each $F_n $ is separable: indeed $F_n \subset C(B_n)$ and so there exists a countable set $K_n \subset F_n$ which is dense in $F_n$. 
 To finish  we define $K_\infty = \cup_{n \ge 1} K_n$.  \qed 
\end{remark}
\begin{theorem}
\label{thm:xd}
 Let $\mu^0\in\mathrm{Pr}_1(D\times\bT)$. There exists at most a unique weak measure-valued solution to equation (\ref{eq:FP2}), with initial condition $\mu^0$, in $\rC$.
\end{theorem}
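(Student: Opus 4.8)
\emph{Proof plan.} Let $\mu^1,\mu^2\in\rC$ be two weak measure-valued solutions of \eqref{eq:FP2} with the same initial datum $\mu^0$, and put $m(t):=d_{\mathrm{TV}}(\mu^1_t,\mu^2_t)$ for $t\in[0,T]$; by Remark \ref{maa}, $m$ is Borel and bounded. My aim is to show that $m$ satisfies a (mildly singular) Gronwall inequality, so that $m\equiv0$, i.e. $\mu^1=\mu^2$. The mechanism is duality: I freeze the nonlinearity along $\mu^1$ and test against the solution of the associated backward parabolic equation.

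First I fix $t\in(0,T]$ and $\psi\in C_c^{\infty}(D\times\bT)$ with $\VV\psi\VV_\infty\le1$, and — regarding $x\in D$ as a parameter — I consider on $[0,t]$ the backward equation
\begin{equation*}
\partial_s u_s(x,v)+\tfrac12\big(\sigma_2^\epsilon(v)\big)^2\partial_v^2u_s(x,v)+b(\mu^1_s)(x,v)\,\partial_v u_s(x,v)=0,\qquad u_t=\psi .
\end{equation*}
The drift $b(\mu^1_s)(x,v)=\lambda_2(v)+\int_{D\times\bT}g_2(x,v,y,w)\,\mu^1_s(\ud y,\ud w)$ is bounded by $\VV\lambda_2\VV_\infty+\VV\theta\VV_\infty$; it is Borel in $(s,v)$ (a monotone class argument based on the weak continuity of $s\mapsto\mu^1_s$), and continuous in $x$ (since $\theta$ is uniformly continuous). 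As $\sigma_2^\epsilon\in C_b^1$ with $\sigma_2^\epsilon\ge\sqrt{2\epsilon}>0$, the equation is uniformly parabolic with bounded measurable first-order term, and non-degenerate parabolic theory provides a bounded solution $u$, depending continuously on the parameter $x$, satisfying the gradient estimate $\VV\partial_v u_s\VV_\infty\le C\,(t-s)^{-1/2}\VV\psi\VV_\infty$ for $s\in[0,t)$, with $C$ depending only on $\epsilon$, $\VV\sigma^\epsilon\VV_{C_b^1}$ and $\VV\lambda_2\VV_\infty+\VV\theta\VV_\infty$.

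Next I feed the time-dependent test function $s\mapsto u_s$ into the weak formulation \eqref{FP weak} (which extends to time-dependent test functions by a routine approximation using the weak continuity of $s\mapsto\mu^i_s$ and the regularity of $s\mapsto u_s$): the terms coming from $\tfrac12(\sigma_2^\epsilon)^2\partial_v^2u_s$ and $b(\mu^1_s)\partial_v u_s$ cancel $\partial_s u_s$, so that $\langle\mu^1_t,\psi\rangle=\langle\mu^0,u_0\rangle$ while
\begin{equation*}
\langle\mu^2_t,\psi\rangle=\langle\mu^0,u_0\rangle+\int_0^t\big\langle\mu^2_s,\,\big(b(\mu^2_s)-b(\mu^1_s)\big)\,\partial_v u_s\big\rangle\,\ud s .
\end{equation*}
Subtracting, and using that for every $(x,v)$ the function $(y,w)\mapsto g_2(x,v,y,w)/\VV\theta\VV_\infty$ is Borel with supremum norm $\le1$, whence $\big|b(\mu^1_s)(x,v)-b(\mu^2_s)(x,v)\big|\le\VV\theta\VV_\infty\,m(s)$, I obtain
\begin{equation*}
\big|\langle\mu^1_t-\mu^2_t,\psi\rangle\big|\le\int_0^t\VV\theta\VV_\infty\,m(s)\,\VV\partial_v u_s\VV_\infty\,\ud s\le C'\int_0^t\frac{m(s)}{\sqrt{t-s}}\,\ud s .
\end{equation*}
Taking the supremum over such $\psi$ and invoking \eqref{ci1}, I get $m(t)\le C'\int_0^t(t-s)^{-1/2}m(s)\,\ud s$ for all $t\in[0,T]$; since $m$ is bounded, one iteration of this estimate yields a non-singular kernel, and the classical Gronwall lemma forces $m\equiv0$.

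The hard part will be the second paragraph. The frozen drift $b(\mu^1_\cdot)$ is merely bounded and Borel, and genuinely discontinuous (through $\lambda_2$ and $\ind_{[0,1]}$), so one cannot expect $\partial_v^2u_s$ to be bounded, i.e. $u_s$ need not lie in $\sT$. I would circumvent this by mollifying $b(\mu^1_\cdot)$ in $(x,v)$, solving the backward equation for the smooth approximants (for which $u_s\in\sT$ and the gradient bound hold uniformly) and passing to the limit — which is precisely where one exploits that the discontinuity set of $b(\mu^1_s)$ in $v$ has zero Lebesgue measure, together with the regularizing effect of $\sigma_2^\epsilon\ge\sqrt{2\epsilon}$, which makes each $\mu^i_t$ absolutely continuous for $t>0$. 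An alternative route, bypassing the dual equation, is to represent each $\mu^i$ — via a superposition principle — as the family of time-marginals of a weak solution of the linear SDE $\ud V_s=b(\mu^i_s)(X_0,V_s)\,\ud s+\sigma_2^\epsilon(V_s)\,\ud B_s$ with $\mathrm{Law}(X_0,V_0)=\mu^0$ (testing \eqref{FP weak} with functions of $x$ only shows that the $x$-marginal of $\mu^i_t$ is always $\nu$, which makes the representation consistent), and then to compare the two path-laws through Girsanov's theorem and Pinsker's inequality; this gives directly $m(t)^2\le C\int_0^t m(s)^2\,\ud s$ and hence $m\equiv0$.
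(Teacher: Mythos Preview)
Your plan is sound in spirit and ends at the same singular Gronwall inequality as the paper, but the route is more laborious than necessary, and the paper's argument shows why.

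The paper does \emph{not} put the drift into the dual equation. It works only with the semigroup $T_t$ generated by $A=\tfrac12(\sigma_2^\epsilon)^2\partial_v^2$; since $\sigma_2^\epsilon\in C_b^1$, one has $T_t\phi\in\sT$ for every $\phi\in\sT$, so the test--function regularity problem you flag never arises. Differentiating $s\mapsto\langle\mu_s,T_{t-s}\phi\rangle$ turns the weak formulation directly into the mild one
\[
\langle\mu_t,\phi\rangle=\langle\mu^0,T_t\phi\rangle+\int_0^t\langle\mu_s,\,b(\mu_s)\,\partial_vT_{t-s}\phi\rangle\,\ud s,
\]
and subtracting the two mild identities produces \emph{two} error terms, $\langle\mu^1_s,[b(\mu^1_s)-b(\mu^2_s)]\partial_vT_{t-s}\phi\rangle$ and $\langle\mu^1_s-\mu^2_s,\,b(\mu^2_s)\partial_vT_{t-s}\phi\rangle$. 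Both are bounded by $C(t-s)^{-1/2}d_{\mathrm{TV}}(\mu^1_s,\mu^2_s)$ via the gradient estimate $\VV\partial_vT_t g\VV_\infty\le c\,t^{-1/2}\VV g\VV_\infty$, and the generalized Gronwall lemma finishes.

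By contrast, freezing $b(\mu^1_\cdot)$ inside your backward equation buys you a single error term at the price of the very difficulty you identify: $u_s\notin\sT$, so it cannot be plugged into \eqref{FP weak}. Your mollification fix is viable, but note that passing to the limit in $\int_0^t\langle\mu^i_s,(b(\mu^1_s)-b^n)\partial_vu^n_s\rangle\,\ud s$ requires $\mu^i_s$ not to charge the discontinuity set $D\times\{0,1\}$, i.e.\ a regularity property of weak solutions that is \emph{not} assumed in Theorem~\ref{thm:xd} and would have to be proved separately (by viewing $\mu^i$ as a solution of the frozen linear Fokker--Planck equation). Your Girsanov--Pinsker alternative likewise rests on a superposition principle for measure-valued solutions of linear Fokker--Planck equations with merely bounded Borel drift, which is external machinery. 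In short: treating the \emph{entire} drift $b(\mu_s)$ as a perturbation of the smooth diffusion semigroup, as the paper does, sidesteps all of this and is the cleaner move here.
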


\begin{proof}
Given a function $f : \bT \to \bR$ we still denote by $f$  its $2$-periodic version defined on $\bR$. Let $\psi \in \sT$ and define the operator 
$$A \psi (x,v)= A\psi(x, \cdot)(v) = \frac{\sigma^\epsilon_2(v)}{2}\frac{\partial^2 \psi}{\partial v^2}(x,v)\ .$$
It is well known that $A$ is the infinitesimal generator of a diffusion semigroup $T_t : B_b(D \times \bR) \to B_b(D \times \bR)$:
\begin{equation} \label{ciao}
 T_t \zeta(x,v) = \int_{\bR} \zeta(x,v') p_t(v,v') dv' , \; \; x \in D, \; v \in \bR, \; \zeta \in B_b(D \times \bR),\; t\ge 0,
\end{equation}
where the density $p_t(\cdot, v') \in C^2(\bR)$, for any  $v' \in \bR$,   $t>0$ (see, for instance, Chapter 6 in  \cite{Fri75}). Moreover, for $t>0$, $p_t(v,v')$, $\partial_v p_t (v,v')$ and $\partial^2_v p_t (v,v')$ are continuous functions on $\bR^2$.
 In addition, for any $g \in C_b(D \times \bR )$, $t \in (0,T)$, we have 
\begin{equation} \label{reg1}
 \| \partial_v T_t g \|_{\infty} 
 \le \frac{c}{\sqrt{t}} \| g\|_{\infty}, \;\;\;
  \| \partial_v^2 T_t g \|_{\infty} 
 \le \frac{c}{{t}} \| g\|_{\infty}.
\end{equation} 
Finally, if $f\in C_b^2 (D\times\bR)$, we have  $T_t f \in C^2_b(D \times \bR) $, $t\ge 0$, and $\partial_t T_t f(x,v) = T_t A f(x,v)$ $= AT_tf(x,v)$, $t\ge 0$, $x \in D$, $v \in \bR$.

Since in our case $\sigma_2$ is also $2$-periodic, it is not difficult to prove that, for $t>0,$  $p_t$ is  $2$-periodic in both variables, i.e., $p_t(v+ 2, v'+2) = p_t(v,v')$, $v,v' \in \bR$.
It follows that if $\psi \in B_b(D \times \bR)$ is $2$-periodic %(i.e., $\psi \in B(\bT)$) 
 in the $v$-variable 
then also $T_t \psi$ is $2$-periodic in the $v$-variable. Differentiating, we obtain that $\partial_v p_t (v,v')$ and $\partial^2_v p_t (v,v')$ are $2$-periodic continuous functions in both variables. Hence, in particular, $T_t \psi \in {\cal T}$ if $\psi \in {\cal T}$, $t \ge 0$.
\smallskip One can prove that $\mu\in \rC$ is a weak solution to (\ref{eq:FP2}) if and only if it is a mild solution, i.e., if and only if
\begin{equation}
\label{eq:FPmild}
  \langle \mu_t,\phi\rangle=\langle \mu^0, T_t \phi\rangle +\int_0^t\langle\mu_s,b(\mu_s)\frac{\partial}{\partial v} T_{t-s} \phi\rangle \ud s, \;\; \phi \in {\cal T}, t\in[0,T].
\end{equation}
We only show that any weak solution is a mild solution (this is the part  we need to prove our uniqueness claim).
 We fix $\phi $ and $t>0$. Differentiating with respect to $s \in (0,t)$ the mapping
$$
 s \mapsto  \langle T_{t-s} \phi , \mu_s \rangle =
\int_{D \times \bR} \mu_s(dx, dv) \int_{\bR} \phi (x, v') p_{t-s} (v,v') dv'
$$
we get 
\begin{equation*}
 \frac{d}{ds} ( \langle T_{t-s} \phi , \mu_s \rangle)
 = - \langle T_{t-s} A \phi , \mu_s \rangle) + \langle \mu_{s},b(\mu_s)  \partial_{v} T_{t-s} \phi\rangle  
 +
 \langle \mu_{s}, T_{t-s} A \phi \rangle\ .
 \end{equation*}
Integrating with respect to $s$ on $[0,t]$ we find the assertion.\\
Now we prove the claim of the theorem. Let $\mu^1,\mu^2\in \rC$ be two solutions to (\ref{eq:FP2}) with the same initial condition $\mu^0$; then for every $t$ (in the sequel we can consider the supremum over $\phi\in K_\infty \subset C_c^{\infty}$ such that $\VV\phi\VV_\infty\leq 1$ as in the previous remark)
\begin{align}
\nonumber  d_{\mathrm{TV}}(\mu^1_t,\mu^2_t)&=\sup_{\VV\phi\VV_\infty\leq 1}\left\vert\int_0^t\left[\langle\mu^1_s,b(\mu^1_s)
\partial_v T_{t-s}\phi\rangle-\langle\mu^2_s,b(\mu^2_s)\partial_v T_{t-s}\phi\rangle\right]\ud s\right\vert\\
\label{eq:term1}                   &\leq\sup_{\VV\phi\VV_\infty\leq 1}\left\vert\int_0^t\langle\mu^1_s,\left[b(\mu^1_s)-b(\mu^2_s)\right]\partial_v T_{t-s}\phi\rangle\ud s\right\vert\\
\label{eq:term2}                   &\phantom{\leq}+\sup_{\VV\phi\VV_\infty\leq 1}\left\vert\int_0^t\langle\mu^1_s-\mu^2_s,b(\mu^2_s)\partial_v 
T_{t-s} \phi\rangle\ud s\right\vert\ .
\end{align}
The function $b(\mu^2_s)\partial_v T_{t-s}\phi$ is bounded and measurable and we have the estimate (cf. \eqref{reg1})
\begin{equation*}
  \left\VV \partial_v T_{t-s} \phi\right\VV_\infty\leq\frac{C}{\sqrt{t-s}}\VV\phi\VV_\infty;
\end{equation*}
we can thus bound the term (\ref{eq:term2}) by
\begin{equation*}
  \sup_{\VV\phi\VV_\infty\leq 1}\int_0^td_{\mathrm{TV}}(\mu_s^1,\mu_s^2)\left\VV b(\mu^2_s)\partial_v T_{t-s } \phi\right\VV_\infty\ud s\leq \left(\VV\theta\VV_\infty+  
  \VV\lambda \VV_\infty \right)\int_0^t\frac{C}{\sqrt{t-s}} d_{\mathrm{TV}}(\mu^1_s,\mu^2_s)\ud s\ .
\end{equation*}
Similarly, (\ref{eq:term1}) is bounded by
\begin{equation*}
  \sup_{\VV\phi\VV_\infty\leq 1}\int_0^t\left\VV \left(b(\mu_s^1)-b(\mu_s^2)\right)\partial_v T_{t-s}\phi\right\VV_\infty\ud s\leq \VV\theta\VV_\infty\int_0^t\frac{C}{\sqrt{t-s}}d_{\mathrm{TV}}(\mu_s^1,\mu_s^2)\ud s\ .
\end{equation*}
An application of a generalized version of Gronwall's lemma (see, for instance, \cite[Section 1.2.1]{henry1981}) yields that $\mu_t^1 = \mu_t^2$ for every $t$.
\end{proof}

\section{The laws of the empirical measures}
\label{section:Q_tight}
We denote by $\bQ^N$ the law of $S^N$ on $\rC$ (we are considering each $S^N$ as a r.v. with values in $\rC$). As explained in the introduction, we need to show tightness of the family $\bQ^N$.
\begin{theorem}
\label{thm:tight1}
  The sequence $\left\{\bQ^N\right\}_{N\in\bN}$ is tight in $\rC$.
\end{theorem}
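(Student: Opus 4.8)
The plan is to verify the two hypotheses of a standard tightness criterion for continuous measure-valued processes (see e.g.\ \cite{Oel89}): a compact containment condition, uniform in $t\in[0,T]$ and $N\in\bN$, together with the tightness in $C([0,T];\bR)$ of the real-valued processes $t\mapsto\langle S^N_t,\phi\rangle$ for $\phi$ running over a countable family of test functions in $\sT$ that is convergence-determining for the $\sW_1$-topology (on the compacts arising in the first condition). Two structural features make this feasible: $\bT$ is compact, so the only source of non-compactness of $\mathrm{Pr}_1(D\times\bT)$ comes from the $D$-variable; and the $D$-marginal of $S^N_t$ is the random measure $\bar\nu^N:=\frac1N\sum_{i=1}^N\delta_{\xi^i}$, which does not depend on $t$. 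In particular, all the time dynamics of $S^N$ take place on the compact torus, and one checks at the outset that $S^N\in\rC$ almost surely (from the continuity of $t\mapsto V^{i,N}_t\pmod 2$ in $\bT$ and the fact that $\bar\nu^N$ is finitely supported), so that each $\bQ^N$ is indeed a probability measure on $\rC$.

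For the compact containment I would show that, given $\eta>0$, there is a $\sW_1$-compact $\sK_\eta\subset\mathrm{Pr}_1(D\times\bT)$ with $\bP\big(S^N_t\in\sK_\eta\ \text{for all }t\in[0,T]\big)\ge 1-\eta$ for every $N$. Since $\bT$ is compact and the $D$-marginal of $S^N_t$ equals $\bar\nu^N$ for all $t$, this reduces to finding a $\sW_1$-compact $\tilde\sK_\eta\subset\mathrm{Pr}_1(D)$ with $\bP(\bar\nu^N\in\tilde\sK_\eta)\ge 1-\eta$ uniformly in $N$. As the $\xi^i$ are i.i.d.\ with $\bE|\xi^1|<\infty$, one has $\bE[\bar\nu^N(\{|x|>R\})]=\bP(|\xi^1|>R)$ and $\bE\big[\int_{\{|x|>R\}}|x|\,\bar\nu^N(\ud x)\big]=\bE\big[|\xi^1|\ind_{\{|\xi^1|>R\}}\big]$, both tending to $0$ as $R\to\infty$; a Markov-inequality argument then fixes radii $R_k=R_k(\eta)$ and a constant $M=M(\eta)$ for which $\tilde\sK_\eta:=\big\{\mu\in\mathrm{Pr}_1(D):\mu(\{|x|>R_k\})\le 1/k\ \forall k,\ \int|x|\,\mu(\ud x)\le M\big\}$, which is $\sW_1$-compact, does the job uniformly in $N$.

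For the scalar processes, fix $\phi\in\sT$. The It\^o formula \eqref{eq:Ito_SN} writes $\langle S^N_t,\phi\rangle=\langle S^N_0,\phi\rangle+\int_0^t\Lambda^N_s(\phi)\,\ud s+M^{N,\phi}_t$, where the drift density $\Lambda^N_s(\phi)=\langle S^N_s,\lambda_2\partial_v\phi\rangle+\langle S^N_s,\langle S^N_s,g_2(x,v,\cdot,\cdot)\rangle\partial_v\phi\rangle+\langle S^N_s,\tfrac12(\sigma^\epsilon_2)^2\partial_v^2\phi\rangle$ is bounded by a constant $C_\phi$ depending only on $\VV\lambda\VV_\infty$, $\VV\theta\VV_\infty$, $\VV\sigma^\epsilon\VV_\infty$, $\VV\partial_v\phi\VV_\infty$ and $\VV\partial_v^2\phi\VV_\infty$ (all coefficients being bounded), and $M^{N,\phi}$ is a continuous martingale with $\langle M^{N,\phi}\rangle_t\le C_\phi t/N$ by independence of the $B^i$. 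By Burkholder--Davis--Gundy, $\bE\big|M^{N,\phi}_t-M^{N,\phi}_s\big|^4\le C_\phi|t-s|^2/N^2$, hence
\[
\bE\big|\langle S^N_t,\phi\rangle-\langle S^N_s,\phi\rangle\big|^4\le C_\phi\big(|t-s|^4+|t-s|^2/N^2\big)\le C_\phi|t-s|^2\qquad(s,t\in[0,T])
\]
uniformly in $N$; together with the deterministic bound on $\langle S^N_0,\phi\rangle$, Kolmogorov's tightness criterion gives that $\{\langle S^N_\cdot,\phi\rangle\}_{N\in\bN}$ is tight in $C([0,T];\bR)$.

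It then remains to assemble these facts through the measure-valued tightness criterion. For the countable family I would take the set $K_\infty\subset C_c^\infty(D\times\bT)\subset\sT$ of Remark \ref{maa}, enlarged by a countable collection of bounded Lipschitz functions of $x$ only, eventually linear in $|x|$; on the sets $\tilde\sK_\eta\times\bT$ of the first step the $D$-marginals lie in a $\sW_1$-compact set, so weak convergence there already implies convergence of first moments, and this family is convergence-determining for $\sW_1$. Combining the compact containment with the tightness of $\{\langle S^N_\cdot,\phi\rangle\}_{N}$ along this family then yields tightness of $\{\bQ^N\}_{N\in\bN}$ in $\rC$. The step that requires the most care is precisely this last one --- pinning down a countable family of test functions that is convergence-determining for the Wasserstein-$1$ topology (not merely the weak topology) and compatible with the compact containment sets; by contrast, the discontinuities of $\lambda_2$ and $g_2$ are harmless here, since tightness uses only their boundedness while the test functions $\phi$ are smooth.
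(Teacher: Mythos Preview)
Your approach is correct in outline and genuinely different from the paper's. The paper does not invoke a Jakubowski-type criterion; instead it constructs explicit relatively compact sets $\sK_{M,R}\subset\rC$ directly, by coupling a uniform-in-$t$ first-moment bound with a fractional Sobolev bound $\int_0^T\!\!\int_0^T\sW_1(\mu_t,\mu_s)^p|t-s|^{-1-\alpha p}\,\ud t\,\ud s\le R$, and then shows via Ascoli--Arzel\`a that $\sK_{M,R}$ is relatively compact and via Markov/BDG that $\bQ^N(\sK_{M,R}^\complement)$ is small uniformly in $N$. Your route exploits the problem's structure more explicitly (the $\bT$-factor is compact, the $D$-marginal is time-independent) and is perhaps more transparent, while the paper's argument is self-contained and, as they remark, applies with no changes to more general situations.

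One technical point needs repair: your set $\tilde\sK_\eta=\{\mu:\mu(|x|>R_k)\le 1/k,\ \int|x|\,\ud\mu\le M\}$ is \emph{not} $\sW_1$-compact. Take $R_k=k$, $M=1$ and $\mu_n=(1-\tfrac1n)\delta_0+\tfrac1n\delta_n$: each $\mu_n$ lies in the set, yet $\mu_n\to\delta_0$ weakly while $\sW_1(\mu_n,\delta_0)=1$. Tightness plus a first-moment bound is not enough; you need uniform integrability of $|x|$. The fix is immediate with the estimate you already have: replace the first condition by $\int_{\{|x|>R_k\}}|x|\,\mu(\ud x)\le 1/k$, which follows from $\bE\big[\int_{\{|x|>R\}}|x|\,\bar\nu^N(\ud x)\big]=\bE[|\xi^1|\ind_{\{|\xi^1|>R\}}]\to 0$ and Markov's inequality. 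Conversely, your worry about the final assembly step is larger than necessary: on any $\sW_1$-compact subset of $\mathrm{Pr}_1(D\times\bT)$ the $\sW_1$ and weak topologies coincide (a continuous bijection from a compact to a Hausdorff space is a homeomorphism), so any countable family in $C_b(D\times\bT)$ that separates points --- in particular $K_\infty$ from Remark~\ref{maa} --- already suffices for Jakubowski's criterion, and no unbounded test functions are needed.
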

\begin{proof}
Fix any $(x_0,v_0)\in D\times\bT$ and consider the set
\begin{multline*}
  \sK_{M,R}=\bigg\{\mu\in \rC\colon \\ \sup_{t\in[0,T]}\int_{D\times\bT}d_{D\times\bT}\big((x_0,v_0),(x,v)\big)\mu_t\left(\ud x,\ud v\right)\leq M,\ \int_0^T\int_0^T\frac{\sW_1\left(\mu_t,\mu_s\right)^p}{\lvert t-s\rvert^{1+\alpha p}}\ud t\ud s\leq R\bigg\}
\end{multline*}
where we choose $\alpha\in(0,1)$ and $p\geq 1$ such that $\alpha p>1$.\\
We show that $\sK_{M,R}$ is relatively compact in $\rC$. Let $B_{(x_0,v_0)}(r)$ denote the open ball with radius $r$ and center $(x_0,v_0)$ in $D\times\bT$. Then for $\mu\in\sK_{M,R}$ and $t\in[0,T]$
\begin{equation*}
  \mu_t\left(B_{(x_0,v_0)}(r)^\complement\right)\leq\frac{1}{r}\int_{D\times\bT}d_{D\times\bT}\big((x_0,v_0),(x,v)\big)\mu_t(\ud x,\ud v)\leq \frac{M}{r}\ .
\end{equation*}
Therefore for every $e>0$ and for every $t\in[0,T]$ we can find $r=r(e,t)$ such that
\begin{equation}
\label{eq:cpt1}
  \mu_t\left(B_{(x_0,v_0)}(r)\right)>1-e
  \end{equation}
for every $\mu\in\sK_{M,R}$.\\
By the Sobolev embedding theorem, if $\beta<(\alpha p-1)/p$, we have that, for any Lipschitz continuous function $\phi$ on $D\times\bT$ and any $t,s\in[0,T]$,
\begin{equation*}
  \lvert\langle\mu_t,\phi\rangle-\langle\mu_s,\phi\rangle\rvert\leq C\lvert t-s\rvert^\beta\left(\int_0^T\int_0^T\frac{\left\vert\langle\mu_t,\phi\rangle-\langle\mu_s,\phi\rangle\right\vert^p}{\lvert t-s\rvert^{1+\alpha p}}\ud t\ud s\right)^{{\frac{1}{p}}}
\end{equation*}
so that, thanks to Kantorovich-Rubinstein characterization of the $1$-Wasserstein distance, we can take the supremum over Lipschitz functions on $D\times\bT$ with Lipschitz seminorm bounded by $1$ on both sides of the previous inequality, obtaining
\begin{equation*}
  \sup_{t\neq s}\frac{\sW_1(\mu_t,\mu_s)}{\lvert t-s\rvert^\beta}\leq C\left(\int_0^T\int_0^T\frac{\sW_1(\mu_t,\mu_s)^p}{\lvert t-s\rvert^{1+\alpha p}}\ud t\ud s\right)^{{\frac{1}{p}}}\ .
\end{equation*}
Therefore the collection of measures $\sK_{M,R}$ is equicontinuous; this together with (\ref{eq:cpt1}) implies relative compactness by the Ascoli-Arzel\`a theorem.

To show tightness we now compute
\begin{align*}
  \bQ^N\left(\sK_{M,R}^\complement\right)&=\bP\left(S^N\in\sK_{M,R}^\complement\right)\\
  &\leq\bP\left(\sup_{t\in[0,T]}\int_{D\times\bT}d_{D\times\bT}\big((x_0,v_0),(x,v)\big) S^N_t\left(\ud x,\ud v\right)>M\right)\\
&\phantom{aaaaaaaaaa}+\bP\left(\int_0^T\int_0^T\frac{\sW_1\left(S^N_t,S^N_s\right)^p}{\lvert t-s\rvert^{1+\alpha p}}\ud t\ud s>R\right).
\end{align*}
For the first term we have
\begin{align*}
  \bP\bigg(\sup_{t\in[0,T]}\int_{D\times\bT}&d_{D\times\bT}\big((x_0,v_0),(x,v)\big) S^N_t\left(\ud x,\ud v\right)>M\bigg)\\
  &\le \frac{1}  {M}\bE\left[\left\vert\sup_{t\in[0,T]}\int_{D\times\bT}d_{D\times\bT}\big((x_0,v_0),(x,v)\big) S^N_t\left(\ud x,\ud v\right)\right\vert\right]\\
  &\le\frac{1}{MN}\bE\left[\sup_{t\in[0,T]}\sum_{j=1}^Nd_{D\times\bT}\left((x_0,v_0),\left( X^{i}_0,V^{i,N}_t\pmod{2}\right)\right)\right]\\
%\displaybreak  
&\le\frac{1}{M}+\frac{1}{MN}\sum_{i=1}^N\bE\left[\sup_{t\in[0,T]}\left\vert\left( X^i_0-x_0,V^{i,N}_t\right)\right\vert\right]\\
  &\le \frac{1}{M}+\frac{C }{MN}\sum_{i=1}^N\bE\Bigg[\left\vert\left(X^i_0-x_0,V_0^{i,N}\right)\right\vert \\
  &\phantom{\frac{1}{MN}\sum_{i=1}^N\bE\Bigg[}+ \int_0^t \sup_{t \in [0,T]} \left\vert\lambda_2\left(V_s^{i,N}\right)+\frac{1}{N}\sum_{j=1}^N g_2\left(X^i_0,V^{i,N}_s,X_0^j,V_s^{j,N}\right)\right\vert\ud s\\
  &\phantom{\lesssim\frac{1}{MN}\sum_{i=1}^N\bE \sup_{t \in [0,T]}\Bigg[\lesssim}+\sup_{t \in [0,T]}\left\vert\int_0^t\sigma_2^\epsilon\left(V_s^{i,N}\right)\ud B^i_s\right\vert\Bigg]
  \leq\frac{C}{M}
\end{align*}
for   a certain constant $C=C(\hat\lambda,\theta,\sigma^\epsilon,T)$, thanks to the Burkholder-Davis-Gundy inequality, the boundedness of $\lambda_2$, $g_2$ and $\sigma_2^\epsilon$ and the fact that $\nu\times\tilde\rho_0\rL_\bT$ has finite first moment.\\
For the second term
\begin{align*}
  \bP\left(\int_0^T\int_0^T\frac{\sW_1\left(S^N_t,S^N_s\right)^p}{\lvert t-s\rvert^{1+\alpha p}}\ud t\ud s>R\right)&\leq\frac{1}{R}\int_0^T\int_0^T\frac{\bE\left[\sW_1\left(S^N_t,S^N_s\right)^p\right]}{\lvert t-s\rvert^{1+\alpha p}}\ud t\ud s .
\end{align*}
Let $\phi$ be a Lipschitz function on $D\times\bT$ with Lipschitz constant $K_\phi\leq 1$. Then
\begin{equation*}
  \left\vert\left\langle S_t^N,\phi\right\rangle-\left\langle S_s^N,\phi\right\rangle\right\vert\leq\frac{1}{N}\sum_{i=1}^Nd_\bT\left( V^{i,N}_t,V^{i,N}_s\right)
\end{equation*}
so that, by the Kantorovich characterization of the $1$-Wasserstein distance and by H\"older's inequality,
\begin{equation*}
  \bE\left[\sW_1\left(S^N_t,S^N_s\right)^p\right]\leq\frac{1}{N}\sum_{i=1}^N\bE\left[d_\bT\left( V^{i,N}_t,V^{i,N}_s\right)^p\right] .
\end{equation*}
Recalling (\ref{eq:metric}), (\ref{eq:dV2}) and notation (\ref{eq:notation_b}), we can write
\begin{equation*}
  d_\bT\left( V^{i,N}_t,V^{i,N}_s\right)\leq\left\vert V^{i,N}_t-V^{i,N}_s\right\vert\leq\int_s^t\left\vert b(S^N_r)(X^i_0,V^{i,N}_r)\right\vert\ud r+\left\vert\int_s^t\sigma^\epsilon_2(V^{i,N}_r)\ud B^i_r\right\vert
\end{equation*}
so that
\begin{equation*}
 \bE\left[d_\bT\left( V^{i,N}_t,V^{i,N}_s\right)^p\right\vert\leq C^\prime\vert t-s\vert^{\nicefrac{p}{2}}
\end{equation*}
for a suitable constant $C^\prime=C^\prime(\hat\lambda,\theta,\sigma^\epsilon)$, again by boundedness of the coefficients and the Burkholder-Davis-Gundy inequality. Choosing $p>2$ and $\alpha$ such that $\alpha p<\nicefrac{p}{2}-1$ we find
\begin{equation*}
  \bP\left(\int_0^T\int_0^T\frac{\sW_1\left(S^N_t,S^N_s\right)^p}{\lvert t-s\rvert^{1+\alpha p}}\ud t\ud s>R\right)\leq\frac{C^\prime}{R}\ .
\end{equation*}
For 
%Given 
any $e>0$ we can now choose $M$ and $R$ so that $\bQ^N\left(\sK_{M,R}^\complement\right)<e$, concluding the proof.
\end{proof}
{ An alternative approach to prove theorem \ref{thm:tight1} could be based on tightness results from \cite[Chapters I and II]{Szn89}, using the boundedness of the coefficients and the interchangeability of the $V^{i,N}$. However, the above direct proof  can be applied to more general situations as well.}
\section{The limit PDE: existence and convergence}
\label{section:PDE_existence}
\subsection{Density Estimates}
\label{subsection:estimates}
Recall that the empirical measure $S_{t}^{N}=\frac{1}{N}\sum_{i=1}^{N}%
\delta_{\left(\xi^{i},V_{t}^{i,N}\pmod*{2}\right)}$ satisfies%
\begin{align*}
\left\langle S_{t}^{N},\phi\right\rangle  &  =\left\langle S_{0}^{N}%
,\phi\right\rangle +\int_{0}^{t}\left\langle S_{r}^{N},\lambda_{2}\partial
_{v}\phi\right\rangle \ud r\\
&  +\int_{0}^{t}\left\langle S_{r}^{N},\left\langle S_{r}^{N},g_{2}\left(
x,v,\cdot,\cdot\right)  \right\rangle \partial_{v}\phi\right\rangle
\ud r+\int_{0}^{t}\left\langle S_{r}^{N},\frac{\left(  \sigma_{2}^{\epsilon
}\right)  ^{2}}{2}\partial_{v}^{2}\phi\right\rangle \ud r+M_{t}^{N,\phi}%
\end{align*}
where%
\[
M_{t}^{N,\phi}=\frac{1}{N}\sum_{i=1}^{N}\int_{0}^{t}\sigma_{2}^{\epsilon
}\left(  V_{r}^{i,N}\right)  \partial_{v}\phi\left(  X_{0}^{i},V_{r}%
^{i,N}\right)  \ud B_{r}^{i}.
\]
We consider a smooth probability density $\gamma : \bT \to \bR$ defined as follows:
$$
\gamma (v) =
\begin{cases}
 c \; \exp\left(-\frac {1}{ \frac{1}{4} - (d_{\bT}(v,0))^2  }\right) \cdot (\frac{1}{4} - d_{\bT}(v,0)^2)^2,\;\;\; \text{if } \;  d_{\bT}(v,0)
 < 1/2,
\\
0 \;\; \text{otherwise.}
\end{cases}
$$
 and 
introduce a correspondent  family of mollifiers $\gamma_{N}\left(  v\right)  =\alpha
_{N}^{-1}\gamma\left(  \alpha_{N}^{-1}v\right)  $ {on $\bT$}. Note that there exists $C>0$ such that 
%where $\gamma$ is a smooth
%probability density {on $\bT$ }satisfying
\[
\left\vert \gamma^{\prime}\left(  w\right)  w\right\vert \leq C\gamma\left(
w\right), \;\; w \in \bT.  
\]
Concerning the positive scaling factor, we assume that $\alpha_N\to 0$ as $N\to\infty$ and
\[
\alpha_{N}^{-3}\leq N.
\]
Consider the empirical density
\begin{align*}
u_{t}^{N}\left(  v\right)  :&=\frac{1}{N}\sum_{i}\int_{D\times\bT}\gamma_{N}\left(v-v^{\prime}\right)  \delta_{\left(X_{0}^{i},V_{t}^{i,N}\pmod*{2}\right)}\left(  \ud x^{\prime},\ud v^{\prime}\right)\\
&=\frac{1}{N}\sum_{i}\gamma_{N}\left(  v-V_{t}^{i,N}\pmod{2}\right)
\\
&=\int_{D\times\bT}\gamma_{N}\left(  v-v^{\prime}\right)  S_{t}^{N}\left(  \ud x^{\prime},\ud v^{\prime}\right)\  
\end{align*}
(where sums and differences are understood on $\bT$, i.e. for $v_1,v_2\in\bT$, $v_1\pm v_2=(v_1\pm_\bR v_2)\pmod*{2}\in\bT$). It satisfies
\begin{align*}
\ud u_{t}^{N}\left(  v\right)   &  = -\langle S^N_t,\lambda_2\partial_v\gamma_N(v-\cdot)\rangle\ud t\\
&-\big\langle S^N_t,\langle S^N_t,g_2(x^\prime,v^\prime,\cdot,\cdot)\rangle\partial_v\gamma_N(v-v^\prime)\big\rangle\ud t \\
&+\langle S^N_t,\frac{(\sigma_2^\epsilon)^2}{2}\partial^2_v\gamma_N(v-\cdot)\rangle\ud t+\ud \overline{M}_{t}^{N}\left(  v\right)
\end{align*}
where
\[
\overline{M}_{t}^{N}\left(  v\right)  =-\frac{1}{N}\sum_{i=1}^{N}\int_{0}^{t}\partial
_{v}\gamma_{N}\left(  v-V_{r}^{i,N}\right)  \sigma_{2}^{\epsilon}\left(
V_{r}^{i,N}\right)  \ud B_{r}^{i}
\]
and according to \eqref{eq:notaz} we write
\begin{align*}
 \left\langle S_{t}^{N},\left( \left\langle S_{t}^{N}
,g_{2}\left(  x^\prime,v^\prime,\cdot,\cdot\right)  \right\rangle \right)  \partial
_{v}\gamma_{N}\left(  v- v'\right)  \right\rangle
\\
=   \int_{D\times\bT}\partial_v\gamma_N(v-v^\prime)\int_{D\times\bT}g_2(x^\prime,v^\prime,y,w)S^N_t(\ud y,\ud w)S^N_t(\ud x^\prime,\ud v^\prime).
\end{align*}
In the next lemma we will use that
$\sigma_{2}^{\epsilon}\left(  v\right)  $ is differentiable with bounded
derivative and
\[
0<\epsilon\leq\frac{\left(  \sigma_{2}^{\epsilon}\left(  v\right)
\right)  ^{2}}{2}\leq C.
\]

\begin{lemma}
\label{lem:51}
There exists a constant $C_{\epsilon}>0$ such that%
\[
\sup_{t\in\left[  0,T\right]  }E\int_\bT\left\vert u_{t}^{N}\left(  v\right)
\right\vert ^{2}\ud v+E\int_{0}^{T}\int_\bT\left\vert \partial_{v}u_{t}^{N}\left(
v\right)  \right\vert ^{2}\ud v\ud t\leq C_{\epsilon}%
\]
for every $N\in\bN$.
\end{lemma}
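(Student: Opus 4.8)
The plan is an energy estimate for $\|u^N_t\|_{L^2(\bT)}^2:=\int_\bT|u^N_t(v)|^2\,\ud v$ followed by Gronwall's lemma. Applying It\^o's formula to $t\mapsto\|u^N_t\|_{L^2(\bT)}^2$ and using the equation for $\ud u^N_t(v)$ recalled above, one splits the differential into four contributions: the drift coming from $\lambda_2$ and $g_2$, the drift coming from the second-order operator, the It\^o correction from the quadratic variation of $\overline M^N$, and a martingale. For each fixed $N$ all the quantities $u^N_t(v)$, $\gamma_N$, $\gamma_N'$ and $\sigma_2^\epsilon$ are bounded, so the last term is a genuine martingale with zero expectation and $\int_\bT$ may be interchanged with the stochastic integral. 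The It\^o correction integrates to
\[
\frac1{N^2}\sum_{i=1}^N\big(\sigma_2^\epsilon(V^{i,N}_t)\big)^2\int_\bT\gamma_N'(v-V^{i,N}_t)^2\,\ud v\,\ud t\;\le\;\frac{C}{N}\,\|\gamma_N'\|_{L^2(\bT)}^2\,\ud t\;=\;C\,\frac{\alpha_N^{-3}}{N}\,\|\gamma'\|_{L^2(\bT)}^2\,\ud t\;\le\;C\,\|\gamma'\|_{L^2(\bT)}^2\,\ud t,
\]
using exactly the scaling hypothesis $\alpha_N^{-3}\le N$; so it only adds a bounded constant.

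For the first drift I would rewrite $\langle S^N_t,\lambda_2\partial_v\gamma_N(v-\cdot)\rangle+\langle S^N_t,\langle S^N_t,g_2(x',v',\cdot,\cdot)\rangle\partial_v\gamma_N(v-v')\rangle=\partial_v\Phi^N_t(v)$ with $\Phi^N_t(v):=\langle S^N_t,b(S^N_t)(x',v')\,\gamma_N(v-v')\rangle$, $b$ being the drift field of \eqref{eq:notation_b}; this is legitimate because $b(S^N_t)$ does not depend on the variable $v$. Since $S^N_t$ is a probability measure and $\ind_{[0,1]},\ind_{[1,1+\delta]}\le 1$, one has $\|b(S^N_t)\|_\infty\le\|\lambda\|_\infty+\|\theta\|_\infty=:\Lambda$, hence the pointwise bound $|\Phi^N_t(v)|\le\Lambda\,u^N_t(v)$ and $\|\Phi^N_t\|_{L^2(\bT)}\le\Lambda\|u^N_t\|_{L^2(\bT)}$. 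Integrating by parts on $\bT$ (no boundary terms, since every function here is smooth and $2$-periodic), this drift contributes $2\int_\bT\partial_v u^N_t\,\Phi^N_t\,\ud v$, which by Cauchy--Schwarz and Young is bounded by $\tfrac\epsilon2\|\partial_v u^N_t\|_{L^2(\bT)}^2+C\|u^N_t\|_{L^2(\bT)}^2$.

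The main obstacle is the second-order drift, precisely because $\sigma_2^\epsilon$ is not constant, so it cannot be pulled out of the mollification. I would write this term as $\partial_v^2 K^N_t(v)$ with $K^N_t(v):=\langle S^N_t,\tfrac{(\sigma_2^\epsilon)^2}{2}\gamma_N(v-\cdot)\rangle$, integrate by parts once to get $-2\int_\bT\partial_v u^N_t\,\partial_v K^N_t\,\ud v$, and then \emph{freeze} the diffusion coefficient at the running point $v$:
\[
\partial_v K^N_t(v)=\frac{(\sigma_2^\epsilon(v))^2}{2}\,\partial_v u^N_t(v)+E^N_t(v),\qquad E^N_t(v):=\frac1N\sum_{i=1}^N\Big(\tfrac{(\sigma_2^\epsilon(V^{i,N}_t))^2}{2}-\tfrac{(\sigma_2^\epsilon(v))^2}{2}\Big)\gamma_N'(v-V^{i,N}_t).
\]
The $i$-th summand of $E^N_t(v)$ vanishes unless $d_\bT(v,V^{i,N}_t)<\alpha_N/2$; on that set $|(\sigma_2^\epsilon(V^{i,N}_t))^2-(\sigma_2^\epsilon(v))^2|\le L_\sigma\,d_\bT(v,V^{i,N}_t)$, where $L_\sigma$ is the Lipschitz constant of $(\sigma_2^\epsilon)^2$ on $\bT$ — this is exactly where the $C^1_b$ assumption on $\sigma^\epsilon$, together with $\tfrac{(\sigma_2^\epsilon)^2}{2}\le C$, enters. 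Combining this with the rescaled form $|w\,\gamma_N'(w)|\le C\,\gamma_N(w)$ of the stated property $|w\,\gamma'(w)|\le C\,\gamma(w)$ yields the pointwise bound $|E^N_t(v)|\le C\,u^N_t(v)$, hence $\|E^N_t\|_{L^2(\bT)}\le C\|u^N_t\|_{L^2(\bT)}$. Plugging the decomposition in, the second-order drift contributes $-\int_\bT(\sigma_2^\epsilon)^2(\partial_v u^N_t)^2\,\ud v-2\int_\bT\partial_v u^N_t\,E^N_t\,\ud v$; the first, coercive, term is $\le-2\epsilon\|\partial_v u^N_t\|_{L^2(\bT)}^2$ because $(\sigma_2^\epsilon)^2\ge2\epsilon$, and the second is $\le\tfrac\epsilon2\|\partial_v u^N_t\|_{L^2(\bT)}^2+C\|u^N_t\|_{L^2(\bT)}^2$ by Young.

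Collecting the four contributions and taking expectations, the martingale drops out and the net coefficient of $\|\partial_v u^N_t\|_{L^2(\bT)}^2$ is $-2\epsilon+\tfrac\epsilon2+\tfrac\epsilon2=-\epsilon<0$, so
\[
\frac{\ud}{\ud t}\,\bE\|u^N_t\|_{L^2(\bT)}^2+\epsilon\,\bE\|\partial_v u^N_t\|_{L^2(\bT)}^2\;\le\;C\,\bE\|u^N_t\|_{L^2(\bT)}^2+C .
\]
It remains to bound the initial datum uniformly in $N$: since the $\eta^i$ are i.i.d.\ with density $\tilde\rho_0\in L^2(0,2)$, a direct computation of the double sum gives $\bE\|u^N_0\|_{L^2(\bT)}^2=\tfrac{N-1}{N}\|\gamma_N*\tilde\rho_0\|_{L^2(\bT)}^2+\tfrac1N\|\gamma_N\|_{L^2(\bT)}^2\le\|\tilde\rho_0\|_{L^2(\bT)}^2+\tfrac{\alpha_N^{-1}}{N}\|\gamma\|_{L^2(\bT)}^2$, which is bounded uniformly in $N$ by Young's convolution inequality for the first term and $\alpha_N^{-3}\le N$ for the second. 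Gronwall's lemma then gives $\sup_{t\in[0,T]}\bE\|u^N_t\|_{L^2(\bT)}^2\le C_\epsilon$, and reinserting this into the differential inequality and integrating over $[0,T]$ yields $\bE\int_0^T\|\partial_v u^N_t\|_{L^2(\bT)}^2\,\ud t\le C_\epsilon$, which is the asserted bound.
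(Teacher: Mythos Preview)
Your proof is correct and follows essentially the same route as the paper's: It\^o's formula for $\|u^N_t\|_{L^2(\bT)}^2$, freezing $\sigma_2^\epsilon$ at the outer variable $v$ to extract the coercive term $-\int_\bT(\sigma_2^\epsilon)^2(\partial_v u^N_t)^2$, controlling the commutator via $|w\gamma'(w)|\le C\gamma(w)$ together with the Lipschitz bound on $(\sigma_2^\epsilon)^2$, bounding the It\^o correction by $\alpha_N^{-3}/N\le 1$, and Gronwall. The only organisational difference is that the paper writes the correction as $R^N_t(v)=\partial_v\langle S^N_t,[\tfrac{(\sigma_2^\epsilon(v'))^2}{2}-\tfrac{(\sigma_2^\epsilon(v))^2}{2}]\gamma_N(v-v')\rangle$ and pairs it with $\partial_v u^N_t$, whereas you integrate by parts once first and isolate $E^N_t$; the estimates are the same. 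Your treatment of $\bE\|u^N_0\|_{L^2}^2$, separating diagonal and off-diagonal terms and invoking Young's convolution inequality for the latter, is in fact cleaner than the paper's sketch.
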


\begin{proof}
\textbf{Step 1} (energy identity). One has by It\^o's formula, integrating by parts,
\begin{align*}
\frac{1}{2}\ud\int_{\bT}&\left\vert u_{t}^{N}\left(  v\right)  \right\vert ^{2}\ud v 
=\int_{\bT}\frac{\left(  \sigma_{2}^{\epsilon}\left(  v\right)  \right)  ^{2}}%
{2}u_{t}^{N}\left(  v\right)  \partial_{v}^{2}u_{t}^{N}\left(  v\right)
\ud v\ud t\\
&  +\int_{\bT}\left\langle S_{t}^{N},\left(  \lambda_{2}+\left\langle S_{t}%
^{N},g_{2}\left(  x^\prime,v^\prime,\cdot,\cdot\right)  \right\rangle \right)  \gamma_{N}\left(  v-\cdot\right)  \right\rangle \partial_{v}u_{t}^{N}\left(
v\right)  \ud v\ud t\\
&  -\int_{\bT} R_{t}^{N}\left(  v\right)  \partial_{v}u_{t}^{N}\left(  v\right)
\ud v\ud t+\int_\bT u_{t}^{N}\left(  v\right)  \ud \overline{M}_{t}^{N}\left(  v\right)  \ud v+\frac
{1}{2}\int_\bT \ud\left[  \overline{M}^{N}\left(  v\right)  \right]  _{t}\ud v
\end{align*}
where 
\[
R_{t}^{N}\left(  v\right)  =\partial_{v}\int_{D\times\bT}\gamma_{N}\left(  v-v^{\prime
}\right)  \left[ - \frac{\left(  \sigma_{2}^{\epsilon}\left(  v\right)
\right)  ^{2}}{2}+\frac{\left(  \sigma_{2}^{\epsilon}\left(  v^{\prime
}\right)  \right)  ^{2}}{2}\right]  S_{t}^{N}\left(  \ud x^{\prime},\ud v^{\prime
}\right)\ ,
\]
and we write $\int_\bT u_{t}^{N}\left(  v\right)  \ud \overline{M}_{t}^{N}\left(  v\right)
\ud v$ for
\[
\int_\bT u_{t}^{N}\left(  v\right)  \ud\overline{M}_{t}^{N}\left(  v\right)  \ud v=\frac{1}{N}%
\sum_{i=1}^{N}\left(  \int_\bT\partial_{v}\gamma_{N}\left(  v-V_{t}^{i,N}\right)
u_{t}^{N}\left(  v\right)  \ud v\right)  \sigma_{2}^{\epsilon}\left(  V_{t}%
^{i,N}\right)  \ud B_{t}^{i}%
\]
and $\int_\bT \ud\left[  \overline{M}^{N}\left(  v\right)  \right]  _{t}\ud v$ for%
\[
\int_\bT \ud\left[  \overline{M}^{N}\left(  v\right)  \right]  _{t}\ud v=\frac{1}{N^{2}}\sum
_{i=1}^{N}\int_\bT\left\vert \partial_{v}\gamma_{N}\left(  v-V_{t}^{i,N}\right)
\right\vert ^{2}\ud v\left\vert \sigma_{2}^{\epsilon}\left(  V_{t}^{i,N}\right)
\right\vert ^{2}\ud t.
\]
\textbf{Step 2} (deterministic terms). Using the assumptions on $\sigma
_{2}^{\epsilon}\left(  v\right)  $, one has integrating by parts
\[
\int_\bT\frac{\left(  \sigma_{2}^{\epsilon}\left(  v\right)  \right)  ^{2}}%
{2}u_{t}^{N}\left(  v\right)  \partial_{v}^{2}u_{t}^{N}\left(  v\right)
\ud v\leq-\frac{\epsilon}{2}\int_\bT\left\vert \partial_{v}u_{t}^{N}\left(  v\right)
\right\vert ^{2}\ud v+C_{\epsilon}\int_\bT\left\vert u_{t}^{N}\left(  v\right)
\right\vert ^{2}\ud v
\]
Since (due to the boundedness of $\lambda_{2}$ and $g_{2}$)
\[
\left\vert \lambda_{2}+\left\langle S_{t}^{N},g_{2}\left(  x,v,\cdot
,\cdot\right)  \right\rangle \right\vert \leq C
\]
one has
\[
\left\vert \left\langle S_{t}^{N},\left(  \lambda_{2}+\left\langle S_{t}%
^{N},g_{2}\left(  x^\prime,v^\prime,\cdot,\cdot\right)  \right\rangle \right)  \gamma_{N}\left(  v-\cdot\right)  \right\rangle \right\vert \leq Cu_{t}^{N}\left(
v\right)  .
\]
Therefore, $\bP$-a.s.,
\begin{multline*}
\int_\bT\left\langle S_{t}^{N},\left(  \lambda_{2}+\left\langle S_{t}^{N}%
,g_{2}\left(  x^\prime,v^\prime,\cdot,\cdot\right)  \right\rangle \right)  \gamma_{N}\left(
v-\cdot\right)  \right\rangle \partial_{v}u_{t}^{N}\left(  v\right)
\ud v\\ \leq\frac{\epsilon}{4}\int_\bT\left\vert \partial_{v}u_{t}^{N}\left(
v\right)  \right\vert ^{2}\ud v+C_{\epsilon}\int_\bT\left\vert u_{t}^{N}\left(
v\right)  \right\vert ^{2}\ud v.
\end{multline*}
We have got that $\bP$-a.s.
\begin{multline*}
\frac{1}{2}\ud\int_\bT\left\vert u_{t}^{N}\left(  v\right)  \right\vert ^{2}%
\ud v+\frac{\epsilon}{4}\int_\bT\left\vert \partial_{v}u_{t}^{N}\left(
v\right)  \right\vert ^{2}\ud v\ud t\leq C_{\epsilon}\int_\bT\left\vert u_{t}^{N}\left(
v\right)  \right\vert ^{2}\ud v\\
-\int_\bT R_{t}^{N}\left(  v\right)  \partial_{v}u_{t}^{N}\left(  v\right)
\ud v\ud t+\int_\bT u_{t}^{N}\left(  v\right)  \ud\overline{M}_{t}^{N}\left(  v\right)  \ud v+\frac
{1}{2}\int_\bT \ud\left[  \overline{M}^{N}\left(  v\right)  \right]  _{t}\ud v.
\end{multline*}
Finally, using also the assumption $\left\vert \gamma^{\prime}\left(
w\right)  w\right\vert \leq C\gamma\left(  w\right)  $,
\begin{align*}
\left\vert R_{t}^{N}\left(  v\right)  \right\vert  &  \leq C\int_{D\times\bT}\left[\left\vert
\partial_{v}\gamma_{N}\left(  v-v^{\prime}\right)  \right\vert \left\vert
v-v^{\prime}\right\vert+\gamma_N(v-v^\prime)\right] S_{t}^{N}\left(  \ud x^{\prime},\ud v^{\prime}\right)  \\
&  =C\int_{D\times\bT}\left[\alpha_{N}^{-1}\left\vert \gamma^{\prime}\left(  \alpha_{N}%
^{-1}\left(  v-v^{\prime}\right)  \right)  \right\vert \alpha_{N}%
^{-1}\left\vert v-v^{\prime}\right\vert +\gamma_N(v-v^\prime)\right]S_{t}^{N}\left(  \ud x^{\prime
},\ud v^{\prime}\right)  \\
&  \leq C^{\prime}\int_{D\times\bT}\alpha_{N}^{-1}\gamma\left(  \alpha
_{N}^{-1}\left(  v-v^{\prime}\right)  \right) S_{t}^{N}\left(
\ud x^{\prime},\ud v^{\prime}\right)  \\
&  =C^{\prime}u_{t}^{N}\left(  v\right)  
\end{align*}
which yields
\[
\left\vert \int_\bT R_{t}^{N}\left(  v\right)  \partial_{v}u_{t}^{N}\left(
v\right)  \ud v\right\vert \leq\frac{\epsilon}{8}\int_\bT\left\vert
\partial_{v}u_{t}^{N}\left(  v\right)  \right\vert ^{2}\ud v+C_{\epsilon}%
\int_\bT\left\vert u_{t}^{N}\left(  v\right)  \right\vert ^{2}\ud v.
\]
This implies
\begin{multline*}
\frac{1}{2}\ud\int_\bT\left\vert u_{t}^{N}\left(  v\right)  \right\vert ^{2}%
\ud v+\frac{\epsilon}{8}\int_\bT\left\vert \partial_{v}u_{t}^{N}\left(
v\right)  \right\vert ^{2}\ud v\ud t\\ \leq C_{\epsilon}\int_\bT\left\vert u_{t}^{N}\left(
v\right)  \right\vert ^{2}\ud v
+\int_\bT u_{t}^{N}\left(  v\right)  \ud\overline{M}_{t}^{N}\left(  v\right)  \ud v+\frac{1}%
{2}\int_\bT \ud\left[  \overline{M}^{N}\left(  v\right)  \right]  _{t}\ud v\ ,
\end{multline*}
$\bP$ almost surely.

\textbf{Step 3} (martingale terms and conclusion). It remain to handle the sum
$$\int_\bT u_{t}^{N}\left(  v\right)  \ud\overline{M}_{t}^{N}\left(  v\right)  \ud v+\frac{1}{2}\ud\left[  \overline{M}^{N}\left(  v\right)  \right]  _{t}\ .$$ The term $\int_\bT u_{t}^{N}\left(  v\right)  \ud\overline{M}_{t}^{N}\left(  v\right)  \ud v$ is a martingale, hence
it has mean zero. Indeed, for every $N$ and $i=1,...,N$,%
\begin{align*}
\bE\int_{0}^{T}&\left\vert \left(  \int_\bT\partial_{v}\gamma_{N}\left(
v-V_{t}^{i,N}\right)  u_{t}^{N}\left(  v\right)  \ud v\right)  \sigma
_{2}^{\epsilon}\left(  V_{t}^{i,N}\right)  \right\vert ^{2}\ud t\\
& \leq C\bE\int_{0}^{T}\left\vert \int_\bT\partial_{v}\gamma_{N}\left(
v-V_{t}^{i,N}\right)  u_{t}^{N}\left(  v\right)  \ud v\right\vert ^{2}\ud t\\
& \leq C_{N}\bE\int_{0}^{T}\left\vert \int_\bT u_{t}^{N}\left(  v\right)
\ud v\right\vert ^{2}\ud t=C_{N}T
\end{align*}
(because $\int_\bT u_{t}^{N}\left(  v\right)  \ud v=1$). As to the corrector, we have%
\begin{align*}
\int_\bT\left\vert \partial_{v}\gamma_{N}\left(  v-V_{r}^{i,N}\right)  \right\vert
^{2}\ud v  & =\int_\bT\left\vert \partial_{v}\gamma_{N}\left(  v\right)  \right\vert
^{2}\ud v\\
& =\alpha_{N}^{-2}\alpha_{N}^{-2}\int_\bT\left\vert \gamma^{\prime}\left(
\alpha_{N}^{-1}v\right)  \right\vert ^{2}\ud v=C\alpha_{N}^{-3}%
\end{align*}
where $C=\int_\bT\left\vert \gamma^{\prime}\left(  v\right)  \right\vert ^{2}\ud v$.
Hence, $\bP$-a.s.,
\begin{align*}
\int_\bT\left[  \overline{M}^{N}\left(  v\right)  \right]  _{t}dv  & =\frac{1}{N^{2}}%
\sum_{i=1}^{N}\int_{0}^{t}\int_\bT\left\vert \partial_{v}\gamma_{N}\left(
v-V_{r}^{i,N}\right)  \right\vert ^{2}\ud v\left\vert \sigma_{2}^{\epsilon
}\left(  V_{r}^{i,N}\right)  \right\vert ^{2}\ud r\\
& \leq C\frac{1}{N^{2}}\sum_{i=1}^{N}\int_{0}^{t}\left(  \int_\bT\left\vert
\partial_{v}\gamma_{N}\left(  v-V_{r}^{i,N}\right)  \right\vert ^{2}\ud v\right)
\ud r=C\frac{\alpha_{N}^{-3}}{N}t\leq Ct
\end{align*}
under the assumption $\alpha_{N}^{-3}\leq N$.\\
Using the assumption that the law of the initial data $\eta^i$ has an $L^2$ density, it is not difficult to show that the $L^2$ norm of $u^N_0$ is bounded uniformly with respect to $N$. { To this purpose let us recall that we denote by $\tilde\rho$ the density of each $\eta^i$. Using also standard property of convolutions we get:}
\begin{multline*}
\bE \int_{\bT}   |u_{0}^{N}(v)|^2 \ud v  
=
\int_{\bT}  \bE \Big | \frac{1}{N}\sum_{i}\gamma_{N} \left(  v- \eta^{i}\right) \Big|^2 \ud v 
\\
= \frac{1}{N^2} \frac{1}{\alpha_N^2} \int_{\bT}   \,  \sum_{i} \bE \big |\gamma \left(  \frac{ v- \eta^{i}}{\alpha_N}\right) \big|^2  \ud v =
\frac{1}{N} \frac{1}{\alpha_N^2} \int_{\bT}  \ud v   \int_{\bT}  \Big |\gamma \left(  \frac{ v- w}{\alpha_N}\right) \Big|^2 \tilde\rho(w) \ud w   
\\
\le \| \tilde\rho\|_{L^1(\bT)}
\, \frac{1}{N} \frac{1}{\alpha_N^2}    \int_{\bT}  \Big |\gamma \left(  \frac{ v}{\alpha_N}\right) \Big|^2  \ud v
= 
\| \tilde\rho\|_{L^1(\bT)}
\, \frac{1}{N} \frac{1}{\alpha_N}    \int_{\bT}  \Big |\gamma \left(  { v'}\right) \Big|^2  \ud v' \le C,
\end{multline*}
where $C>0$ is independent of $N$. 
We can therefore take expectation and apply Gronwall's lemma, thus deducing the claim from the results of the two previous steps.
\end{proof}

\begin{lemma}%
\label{lem:52}
There exists $\alpha>0$ small enough such that
\[
\bE\int_{0}^{T}\int_{0}^{T}\frac{\left\Vert u_{t}^{N}-u_{s}^{N}\right\Vert
_{H^{-2}}^{2}}{\left\vert t-s\right\vert ^{1+2\alpha}}\ud s\ud t\leq C_{\epsilon}.
\]
where $H^{-2}=H^{-2}(\bT)$.
\end{lemma}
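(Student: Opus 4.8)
The plan is to integrate the evolution equation for $u^N$ stated just before Lemma \ref{lem:51} over $[s,t]$, so that $u_t^N-u_s^N$ is the sum of three drift integrals --- involving $\langle S_r^N,\lambda_2\partial_v\gamma_N(v-\cdot)\rangle$, $\langle S_r^N,\langle S_r^N,g_2(x',v',\cdot,\cdot)\rangle\partial_v\gamma_N(v-v')\rangle$ and $\langle S_r^N,\tfrac{(\sigma_2^\epsilon)^2}{2}\partial_v^2\gamma_N(v-\cdot)\rangle$ --- and the martingale increment $\overline M_t^N(v)-\overline M_s^N(v)$; then to estimate the $H^{-2}(\bT)$ norm of each of these four pieces, take expectations, and integrate against $|t-s|^{-1-2\alpha}$. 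Since for each $t$ the function $u_t^N$ is a.s.\ smooth on $\bT$, all pairings below are genuine integrals and $\|f\|_{H^{-2}}^2=\sum_{k\in\bZ}(1+\pi^2k^2)^{-2}|\langle f,e_k\rangle|^2$ in terms of the orthonormal Fourier basis $\{e_k\}$ of $L^2(\bT)$.

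For the three drift terms the point is that the coefficients $\lambda_2(v')$, $\langle S_r^N,g_2(x',v',\cdot,\cdot)\rangle$ (bounded by $\VV\theta\VV_\infty$) and $\tfrac{(\sigma_2^\epsilon(v'))^2}{2}$ do not depend on the outer variable $v$, so each of these integrands is $\partial_v$, respectively $\partial_v^2$, of a function of the form $\tfrac1N\sum_i c_i(r)\gamma_N(v-V_r^{i,N})$ with $|c_i(r)|\le C$; such a function is pointwise dominated by $C\,u_r^N(v)$ and hence has $L^2(\bT)$ norm at most $C\|u_r^N\|_{L^2}$. As $\partial_v$ and $\partial_v^2$ map $L^2(\bT)$ boundedly into $H^{-2}(\bT)$, each drift integrand has $H^{-2}$ norm $\le C\|u_r^N\|_{L^2}$; integrating, applying Cauchy--Schwarz in $r$ and then Lemma \ref{lem:51},
\[
\bE\Big\|\int_s^t(\text{drift}_r)\,\ud r\Big\|_{H^{-2}}^2\le C\,|t-s|\,\bE\int_s^t\|u_r^N\|_{L^2}^2\,\ud r\le C_\epsilon\,|t-s|^2 .
\]

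For the martingale increment I expand on the Fourier basis and use It\^o's isometry together with the independence of the $B^i$: writing the $e_k$-coefficient of $\overline M_t^N-\overline M_s^N$ as $-\tfrac1N\sum_i\int_s^t\big(\int_\bT\partial_v\gamma_N(v-V_r^{i,N})\overline{e_k(v)}\,\ud v\big)\sigma_2^\epsilon(V_r^{i,N})\,\ud B_r^i$, an integration by parts in $v$ and $\|\gamma_N\|_{L^1(\bT)}=1$ give $\big|\int_\bT\partial_v\gamma_N(v-v')\overline{e_k(v)}\,\ud v\big|\le C|k|$, so that $\bE\big|\langle\overline M_t^N-\overline M_s^N,e_k\rangle\big|^2\le \tfrac{C|k|^2}{N}|t-s|$. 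Summing against the weights $(1+\pi^2k^2)^{-2}$ --- which converges precisely because the exponent is $2$, since $\sum_k k^2(1+\pi^2k^2)^{-2}<\infty$ --- yields $\bE\|\overline M_t^N-\overline M_s^N\|_{H^{-2}}^2\le C\,|t-s|$.

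Putting the two bounds together, $\bE\|u_t^N-u_s^N\|_{H^{-2}}^2\le C_\epsilon(|t-s|+|t-s|^2)\le C_\epsilon|t-s|$ on $[0,T]$, so
\[
\bE\int_0^T\int_0^T\frac{\|u_t^N-u_s^N\|_{H^{-2}}^2}{|t-s|^{1+2\alpha}}\,\ud s\,\ud t\le C_\epsilon\int_0^T\int_0^T|t-s|^{-2\alpha}\,\ud s\,\ud t<\infty
\]
for any $\alpha\in(0,\tfrac12)$, which is the claim. The step I expect to matter most --- and the reason the negative-order space $H^{-2}$ (rather than $H^{-1}$) appears --- is the second-order diffusion term $\langle S_r^N,\tfrac{(\sigma_2^\epsilon)^2}{2}\partial_v^2\gamma_N(v-\cdot)\rangle$: its naive pointwise estimate carries a divergent factor $\alpha_N^{-2}$, and it is only by rewriting it as $\partial_v^2$ of a function controlled in $L^2$ by $u_r^N$ (hence by Lemma \ref{lem:51}) and measuring in $H^{-2}$ that it becomes harmless; the same exponent is exactly what makes the Fourier sum in the martingale term converge.
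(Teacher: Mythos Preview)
Your proof is correct and follows the same overall strategy as the paper --- integrate the evolution of $u^N$ over $[s,t]$, bound drift and martingale contributions separately in $H^{-2}$, obtain an estimate of order $|t-s|$, and conclude for any $\alpha<\tfrac12$. Two details differ and are worth noting.

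For the drift, the paper reuses the decomposition from the proof of Lemma~\ref{lem:51}, carrying along the commutator term $R_r^N(v)$ (which measures the discrepancy between $\sigma_2^\epsilon$ evaluated at $v$ and at $v'$) and testing against $\phi\in H^2(\bT)$. Your observation that each drift integrand is already $\partial_v$ or $\partial_v^2$ of a function pointwise dominated by $C\,u_r^N$ --- because the coefficients $\lambda_2(V_r^{i,N})$, $\langle S_r^N,g_2(\cdot,V_r^{i,N},\cdot,\cdot)\rangle$ and $\tfrac12(\sigma_2^\epsilon(V_r^{i,N}))^2$ depend on $V_r^{i,N}$ but not on the outer variable $v$ --- is more direct and bypasses the commutator entirely. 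The $R_r^N$ machinery was essential in Lemma~\ref{lem:51} to isolate $\partial_v^2 u_r^N$ for the energy identity, but here it is unnecessary.

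For the martingale increment, the paper writes $\overline M_t^N=\partial_v(\cdot)$ and bounds the $L^2(\bT)$ norm of the primitive directly, which brings in $\int_\bT|\gamma_N|^2\,\ud v\le C\alpha_N^{-1}$ and hence uses the scaling assumption $\alpha_N^{-3}\le N$. Your Fourier argument integrates by parts once onto $e_k$ and uses only $\|\gamma_N\|_{L^1(\bT)}=1$, giving the bound $CN^{-1}|t-s|$ without invoking the scaling of $\alpha_N$ at all. Both routes are short; yours is marginally more self-contained for this particular lemma.
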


\begin{proof}%
Arguing as in Lemma \ref{lem:51}, we have, $\bP$-a.s., for any $0\le s \le t \le T$, $\phi \in H^2(\bT),$
\begin{align*}
\int_{\bT} [u_{t}^{N}(v)-u_{s}^{N}(v) ]\phi(v) dv & =\int_{s}^{t}  dr \int_{\bT}  \frac{\left(
\sigma_{2}^{\epsilon}\left(  v\right)  \right)  ^{2}}{2}u_{r}^{N}\left(
v\right)  \partial_v^2 \phi(v)  \ud v\\
& \int_{s}^{t} dr \int_{\bT} \left\langle S_{r}^{N},\left(  \lambda_{2}+\left\langle
S_{r}^{N},g_{2}\left(  x^\prime,v^\prime,\cdot,\cdot\right)  \right\rangle \right)
\gamma_{N}\left(  v- v'\right)  \right\rangle \partial_v \phi(v)
\ud v\\
& - \int_{s}^{t}dr \int_{\bT} R_{r}^{N}\left(  v\right) \partial_v \phi(v) \ud v
\\
& + \int_{\bT}  [\overline{M}_{t}^{N}\left(  v\right)  -\overline{M}_{s}^{N}\left(  v\right) ] \phi(v) dv
\end{align*}
where $R_{t}^{N}\left(  v\right)  $ and $\overline{M}_{t}^{N}\left(  v\right)  $ are
given in Step 1 of the previous lemma. Then (using the same inequalities
proved above in Step 2 of the previous lemma)
{\begin{align*}
\left\Vert u_{t}^{N}-u_{s}^{N}\right\Vert _{H^{-2}}^{2}  \leq 3\times C\left(t-s\right)  \int_{s}^{t}\left\Vert u_{r}^{N}\right\Vert _{L^{2}}^{2}\ud r+C\left\Vert \overline{M}_{t}^{N}-\overline{M}_{s}^{N}\right\Vert _{H^{-2}}^{2}.
\end{align*}
}
It is sufficient (because of the claim of the previous lemma)\ to prove that%
\[
\bE\int_{0}^{T}\int_{0}^{T}\frac{\left\Vert \overline{M}_{t}^{N}-\overline{M}_{s}^{N}\right\Vert
_{H^{-2}}^{2}}{\left\vert t-s\right\vert ^{1+2\alpha}}\ud s\ud t\leq C_{\epsilon}.
\]
Recall that
\[
\overline{M}_{t}^{N}\left(  v\right)  =\partial_{v}\left(  \frac{1}{N}\sum_{i=1}^{N}%
\int_{0}^{t}\gamma_{N}\left(  v-V_{r}^{i,N}\right)  \sigma_{2}^{\epsilon
}\left(  V_{r}^{i,N}\right)  \ud B_{r}^{i}\right)  .
\]
Then
\begin{align*}
\bE\left[  \left\Vert \overline{M}_{t}^{N}-\overline{M}_{s}^{N}\right\Vert _{H^{-2}}^{2}\right]    &
\leq C\bE\left[  \left\Vert \frac{1}{N}\sum_{i=1}^{N}\int_{s}^{t}\gamma_{N}\left(  v-V_{r}^{i,N}\right)  \sigma_{2}^{\epsilon}\left(  V_{r}%
^{i,N}\right)  \ud B_{r}^{i}\right\Vert _{L^{2}}^{2}\right]  \\
& =\frac{C}{N^{2}}\int_\bT \bE\left[  \left\vert \sum_{i=1}^{N}\int_{s}^{t}%
\gamma_{N}\left(  v-V_{r}^{i,N}\right)  \sigma_{2}^{\epsilon}\left(
V_{r}^{i,N}\right)  \ud B_{r}^{i}\right\vert ^{2}\right]  \ud v\\
& =\frac{C}{N^{2}}\int_\bT\sum_{i=1}^{N}\bE\left[  \int_{s}^{t}\left\vert \gamma_{N}\left(  v-V_{r}^{i,N}\right)  \sigma_{2}^{\epsilon}\left(  V_{r}%
^{i,N}\right)  \right\vert ^{2}\ud r\right]  \ud v\\
& \leq\frac{C}{N^{2}}\int_\bT\sum_{i=1}^{N}\bE\left[  \int_{s}^{t}\left\vert
\gamma_{N}\left(  v-V_{r}^{i,N}\right)  \right\vert ^{2}dr\right]  \ud v\\
&=\frac{C}{N^{2}}\sum_{i=1}^{N}\int_{s}^{t}\bE\left[  \int_\bT\left\vert \gamma_{N}\left(
v-V_{r}^{i,N}\right)  \right\vert ^{2}\ud v\right]  \ud r\\
& =\frac{C}{N^{2}}\sum_{i=1}^{N}\int_{s}^{t}\int_\bT\left\vert \gamma_{N}\left(
v\right)  \right\vert ^{2}\ud v\ud r\leq C\frac{\alpha_{N}^{-1}}{N}\left(
t-s\right)  \leq C\left(  t-s\right)
\end{align*}
where we have used the estimate $\int_\bT\left\vert \gamma_{N}\left(  v\right)\right\vert ^{2}\ud v\leq C\alpha_{N}^{-1}$ and the assumption $\alpha_{N}^{-3}\leq N$. The proof is complete.
\end{proof}

Now let $\bQ_{u^{N}}$ denote the law of the process $u^{N}$. From the previous two lemmas, we deduce that the family
$\left(\bQ_{u^N}\right)$ is tight in
\[
L^{2}\left(  0,T;L^{2}\left(  \bT \right)  \right)
\]
due to a generalized version of Aubin-Lions lemma, which claims that the
space
\[
L^{2}\left(  0,T;W^{1,2}\left(  \bT \right)  \right)  \cap W^{\alpha,2}\left(  0,T;H^{-2}\left(  \bT \right)  \right)
\]
is relatively compact in $L^{2}\left(  0,T;L^{2}\left(  \bT \right)  \right)  $, for $\alpha>0$ (cf. \cite{Sim87}).

\begin{remark} {Introducing  the mollifiers 
 $\gamma_{n}\left(  v\right)  =\alpha
_{n}^{-1}\gamma\left(  \alpha_{n}^{-1}v\right)  $, $n \ge 1$, with  
$
\left\vert \gamma^{\prime}\left(  w\right)  w\right\vert \leq C\gamma\left(
w\right), \;\; w \in \bT,  
$
  $\alpha_n\to 0$ as $n\to\infty$, and
 $ \alpha_{n}^{-3}\leq N,  $ and following the proof of Lemma
\ref{lem:51}, we can obtain  that there exists a constant $C_{\epsilon}>0$ 
such that
\begin{equation} \label{344}
\sup_{t\in\left[  0,T\right]  } \bE\int_\bT\left\vert u_{t}^{n, N}\left(  v\right)
\right\vert ^{2}\ud v+ \bE\int_{0}^{T}\int_\bT\left\vert \partial_{v}u_{t}^{n, N}\left(
v\right)  \right\vert ^{2}\ud v\ud t\leq C_{\epsilon},
\end{equation}
for every $n, \, N\in\bN$, where 
$
u_{t}^{n, N}(v ) 
 =\int_{D\times\bT}\gamma_n\left(  v-v^{\prime}\right)  S_{t}^{N}\left(  \ud x^{\prime},\ud v^{\prime}\right).  $

Now note that  given a Borel probability measure $\nu$ on $\bT$, if there exists $c>0$, such that,  for any $n \ge 1,$
\begin{equation} \label{dee}
\| \nu * \gamma_n \|_{L^2(\bT)} \le c,
\end{equation}
then $\nu \in L^2(\bT)$ and $\| \nu  \|_{L^2(\bT)} \le c$. Indeed, by \eqref{dee}, for any $\phi \in L^2(\bT)$, we have
% by the Cauchy-Schwarz inequality
$$
\Big | \int_{\bT} \phi (y) dy \int_{\bT}    \gamma_n (y- y') \nu (dy')\Big |
= 
\Big | \int_{\bT} \nu (dy')  \int_{\bT} \phi(y)   \gamma_n (y- y') dy\Big | \le c \| \phi \|_{L^2(\bT)}.
$$
Passing to the limit as $n \to \infty$ and using the Riesz  theorem we get the assertion.

Estimate \eqref{344} and the previous argument  could be used to  prove existence of solutions to \eqref{eq:FP2} in $\tilde X$   (see the next section) avoiding the previous Aubin-Lions lemma.
}
\end{remark}

\subsection{Convergence and existence of solutions}
\label{subsection:existence}
Set for notational convenience
\begin{equation}
\label{eq:xx}
\widetilde{X}=\left\{  \mu\in \rC\colon\pi_{v}\mu_{t}\ll\rL_{\bT}\; \text{with} \; \frac{\ud\left(\pi_{v}\mu_{t}\right)}{\ud \rL_\bT} \in L^2 (\bT),\;\text{ for a.e. }t\in\left[  0,T\right]  \right\} 
\end{equation}
where $\pi_{v}\mu_{t}$ is the marginal on the $v$-component of $\mu_{t}$:
\[
\int_\bT f\left(  v\right)  \left(  \pi_{v}\mu_{t}\right)  \left(  dv\right)
:=\int_{D\times\bT}f\left(  v\right)  \mu_{t}\left(  dx,dv\right), \;\;\; f\in C_{}\left(  \bT \right).
\]
%for every $f\in C_{b}\left(  \bR \right)  $.
%It can be proved that $\tilde X$ is a Borel subset of $\rC$.
\begin{lemma}
\label{lem:53}
The space $\tilde X$ is a Borel subset of $\rC$. 
\end{lemma}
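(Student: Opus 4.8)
The plan is to realize $\tilde X$ as the zero set of an explicit Borel functional on $\rC$, built from the mollifiers of the Remark following Lemma~\ref{lem:52}. Fix mollifiers $\gamma_n(v)=\alpha_n^{-1}\gamma(\alpha_n^{-1}v)$ on $\bT$ with $\alpha_n\to 0$ (say $\alpha_n=1/n$), and for a Borel probability measure $\zeta$ on $D\times\bT$ put
\[
\Phi_n(\zeta):=\int_\bT\Big(\int_{D\times\bT}\gamma_n(v-v')\,\zeta(\ud x',\ud v')\Big)^2\ud v=\big\|(\pi_v\zeta)*\gamma_n\big\|_{L^2(\bT)}^2 .
\]
By the argument in that Remark (the implication around \eqref{dee}), $\sup_n\Phi_n(\zeta)<\infty$ forces $\pi_v\zeta\in L^2(\bT)$; conversely, if $\pi_v\zeta=\rho\,\rL_\bT$ with $\rho\in L^2(\bT)$, Young's inequality gives $\|(\pi_v\zeta)*\gamma_n\|_{L^2(\bT)}\le\|\rho\|_{L^2(\bT)}\,\|\gamma_n\|_{L^1(\bT)}=\|\rho\|_{L^2(\bT)}$, so $\sup_n\Phi_n(\zeta)<\infty$. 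Hence $\pi_v\zeta\ll\rL_\bT$ with $L^2$ density if and only if $\sup_n\Phi_n(\zeta)<\infty$, and consequently
\[
\tilde X=\Big\{\mu\in\rC\colon \rL_{[0,T]}\big(\{\,t\in[0,T]\colon \sup_n\Phi_n(\mu_t)=\infty\,\}\big)=0\Big\}.
\]

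Next I would verify two measurability facts. First, for each fixed $n$ the map $\Phi_n$ is continuous on $(\mathrm{Pr}_1(D\times\bT),\sW_1)$: $\sW_1$-convergence implies weak convergence, for each fixed $v\in\bT$ the function $(x',v')\mapsto\gamma_n(v-v')$ is continuous and bounded on $D\times\bT$ (by $\alpha_n^{-1}\|\gamma\|_\infty$), so the inner integral converges pointwise in $v$ and stays uniformly bounded, and dominated convergence on the finite-measure space $\bT$ gives $\Phi_n(\zeta_k)\to\Phi_n(\zeta)$. Second, the evaluation map $(\mu,t)\mapsto\mu_t$ is jointly continuous from $\rC\times[0,T]$ to $\mathrm{Pr}_1(D\times\bT)$, by the triangle inequality $\sW_1(\mu^k_{t_k},\mu_t)\le\sup_s\sW_1(\mu^k_s,\mu_s)+\sW_1(\mu_{t_k},\mu_t)$ together with the continuity of $s\mapsto\mu_s$. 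Combining these, $(\mu,t)\mapsto\Phi_n(\mu_t)$ is jointly Borel on $\rC\times[0,T]$; hence so is $(\mu,t)\mapsto\sup_n\Phi_n(\mu_t)$, valued in $[0,\infty]$, being a countable supremum, and therefore so is the indicator $(\mu,t)\mapsto\ind_{\{\sup_n\Phi_n(\mu_t)=\infty\}}$.

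To finish, I would note that since $\rC$ is second countable the product Borel $\sigma$-algebra coincides with $\mathcal B(\rC\times[0,T])$, so Tonelli's theorem shows that $\mu\mapsto F(\mu):=\int_0^T\ind_{\{\sup_n\Phi_n(\mu_t)=\infty\}}\,\ud t$ is Borel on $\rC$; by the description of $\tilde X$ above, $\tilde X=F^{-1}(\{0\})$, which is Borel. The only genuinely delicate point is turning the pointwise-in-$t$ condition ``$\pi_v\mu_t$ has an $L^2$ density'' into a Borel condition on $\mu$, and this is precisely what the mollifier characterization combined with Tonelli accomplishes; the remaining steps are routine. (If one prefers to bypass the joint continuity of the evaluation map, one may instead use separately that $t\mapsto\mu_t$ is continuous for fixed $\mu$ and $\mu\mapsto\mu_t$ is continuous for fixed $t$, and invoke a standard Carath\'eodory-type measurability argument.)
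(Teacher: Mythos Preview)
Your proof is correct but follows a genuinely different route from the paper's. Both arguments reduce to showing that the set $\Lambda=\{\zeta\in\mathrm{Pr}_1(D\times\bT):\pi_v\zeta\ll\rL_\bT\text{ with }L^2\text{ density}\}$ is Borel, and then handle the ``for a.e.\ $t$'' quantifier by Tonelli (the paper leaves this last step implicit, simply asserting that it suffices to show $\Lambda$ Borel; you spell it out via joint measurability of the evaluation map). The real divergence is in how $\Lambda$ is exhibited as Borel. The paper uses the Riesz characterization: $\pi_v\zeta$ has an $L^2$ density iff $\sup\{|\langle\pi_v\zeta,f\rangle|:\|f\|_{L^2(\bT)}\le 1,\ f\in C(\bT)\}<\infty$; it then writes $\Lambda$ (after projecting to $\mathrm{Pr}_1(\bT)$ via the continuous map $\mu\mapsto\pi_v\mu$) as a countable union of closed sets $\Gamma_N=\{\mu:|\langle\mu,f\rangle|\le N\|f\|_{L^2}\ \forall f\in C(\bT)\}$, so $\Lambda$ is $F_\sigma$. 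You instead use the mollifier characterization $\sup_n\|\,(\pi_v\zeta)*\gamma_n\|_{L^2}<\infty$, citing the Remark after Lemma~\ref{lem:52}, and show each $\Phi_n$ is continuous so that $\sup_n\Phi_n$ is Borel. Your approach dovetails nicely with the density estimates already developed in Section~\ref{subsection:estimates} and makes every measurability step explicit; the paper's approach is shorter and yields the slightly sharper structural information that $\Lambda$ is $F_\sigma$, without needing to reintroduce mollifiers.
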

\begin{proof}
 It is enough to show that $\Lambda = \left\{  \mu\in \mathrm{Pr}_{1}\left(D\times\bT\right)\colon\pi_{v}\mu\ll\rL_\bT\; \text{with} \; \frac{\ud\left(\pi_{v}\mu\right)}{\ud\rL_\bT} \in L^2 (\bT) \right\} 
$ is a Borel subset of $\mathrm{Pr}_{1}\left(D\times\bT\right).$ 

We consider  the continuous mapping $\sJ : \mathrm{Pr}_{1}\left(D\times\bT\right) \to \mathrm{Pr}_{1}\left(\bT\right)$ given by $\sJ \mu = \pi_{v}\mu$, for any $\mu \in \mathrm{Pr}_{1}\left(D\times\bT\right)$. 
If we prove that 
$$
\Gamma = \left\{  \mu\in \mathrm{Pr}_{1}\left(\bT\right)\colon\mu \ll\rL_\bT\; \text{with} \; \frac{\ud\mu}{\ud\rL_\bT}\in L^2 (\bT) \right \}
$$
 is Borel in $\mathrm{Pr}_{1}\left(\bT\right)$ then we get that $\Lambda = \sJ^{-1}(\Gamma)$ is Borel and this finishes the proof. Let us check the assertion on $\Gamma.$

Let $\mu \in \mathrm{Pr}_{1}\left(\bT\right).$  
Using the Riesz theorem  and the fact that $C(\bT)$ is dense in $L^2(\bT)$, we know that $\mu \in \Gamma$ if and only if 
 there exists $c>0$ such that 
\begin{equation} \label{d33}
  \big|\int_{\bT} f(y) \mu(dy) \big| \le c \| f\|_{L^2},\;\; \text{for all} \; f \in C(\bT)
\end{equation} 
(indeed if \eqref{d33} holds for  $\mu \in \mathrm{Pr}_{1}\left(\bT\right)$ then $\mu$ can be uniquely extended to a linear functional on $L^2(\bT)$). 
  Let us define, for integers $N \ge 1$, 
$$
\Gamma_N = \{\mu \in \mathrm{Pr}_{1}\left(\bT\right) \, :\, \text{ \eqref{d33} \ holds with $c$ replaced by $N$} \}
$$ 
It is easy to check that each $\Gamma_N$ is closed in $\mathrm{Pr}_{1}\left(\bT\right)$. We have $\Gamma = \bigcup_{N \ge 1} \Gamma_N $ and this shows that $\Gamma$ is Borel. 
\end{proof}

For any test function $\phi\in {\cal T}$ and any $\mu^0\in\mathrm{Pr}(D\times\bT)$ define on $\rC$ the functional
\begin{equation}
\label{eq:dff}
\Phi^{\mu^0}_{\phi}\left(  \mu\right)  =\sup_{t\in\left[  0,T\right]  }\left\vert
\left\langle \mu_{t},\phi\right\rangle -\left\langle \mu^{0},\phi\right\rangle
-\int_{0}^{t}\left\langle \mu_{s},b\left(  \mu_{s}\right)  \partial
_{v}\phi\right\rangle \ud s-\int_{0}^{t}\left\langle \mu_{s},\frac{\left(
\sigma_{2}^{\epsilon}\right)  ^{2}}{2}\partial_{v}^{2}\phi\right\rangle
\ud s\right\vert \wedge1
\end{equation}
where
\begin{equation*}
b\left(  \mu_{s}\right)  \left(  x,v\right)  = \lambda_2\left(  v\right)
+\left\langle \mu_{s},g_{2}\left(  x,v,\cdot,\cdot\right)  \right\rangle \ .
%\label{def b}
\end{equation*}
\begin{lemma}
\label{lemma Phi}For every ${\phi\in {\cal T}}$ and every $\mu^0\in\mathrm{Pr}(D\times\bT)$, the bounded Borel measurable functional $\Phi^{\mu^0}_{\phi}:\rC\rightarrow\bR $ is continuous at every point of $\widetilde{X}
$. 

Therefore, if $\left\{  Q^{N}\right\}  _{N\in\mathbb{N}}$ and $Q$ are
probability measures on $\rC$ such that { $Q^{N}\rightarrow Q$ weakly} and $Q\left(
\widetilde{X}\right)  =1$, then
\[
\int_{\rC}\Phi^{\mu^0}_{\phi}\ud Q^{N}\rightarrow\int_{\rC}\Phi^{\mu^0}_{\phi}\ud Q\; \text{ as }N\to\infty\ .
\]

\end{lemma}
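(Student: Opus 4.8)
The plan is to exploit the fact that, although $\Phi^{\mu^0}_\phi$ is genuinely discontinuous on $\rC$ because $\lambda_2$ and $g_2$ have jumps, those jumps live on sets of the form $D\times S$ with $S\subset\bT$ finite; such sets are $\rL_\bT$-null in the $v$-variable, hence are assigned zero mass by any measure whose $v$-marginal is absolutely continuous, and this is precisely the restriction defining $\widetilde{X}$. I would write $\Phi^{\mu^0}_\phi(\mu)=\big(\sup_{t\in[0,T]}|F_\mu(t)|\big)\wedge1$ with
\[
F_\mu(t)=\langle\mu_t,\phi\rangle-\langle\mu^0,\phi\rangle-\int_0^t\langle\mu_s,b(\mu_s)\partial_v\phi\rangle\,\ud s-\int_0^t\Big\langle\mu_s,\tfrac{(\sigma_2^\epsilon)^2}{2}\partial_v^2\phi\Big\rangle\,\ud s .
\]
Boundedness is the truncation $\wedge1$, and Borel measurability I would obtain from a monotone-class argument (for fixed $s$ and bounded Borel $\psi$ the map $\mu\mapsto\langle\mu_s,\psi\rangle$ is Borel on $\rC$, hence so is $\mu\mapsto\langle\mu_s,b(\mu_s)\partial_v\phi\rangle$, and Fubini together with continuity of $t\mapsto F_\mu(t)$ closes it). Since $|(a\wedge1)-(b\wedge1)|\le|a-b|$ and $\big|\sup_t|a(t)|-\sup_t|b(t)|\big|\le\sup_t|a(t)-b(t)|$, continuity of $\Phi^{\mu^0}_\phi$ at a point $\mu\in\widetilde{X}$ reduces to showing that $\mu^n\to\mu$ in $\rC$ implies $\sup_{t\in[0,T]}|F_{\mu^n}(t)-F_\mu(t)|\to0$; note that convergence in $\rC$ means $\sup_t\sW_1(\mu^n_t,\mu_t)\to0$, so $\mu^n_s\to\mu_s$ narrowly for every $s$.

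For the term $\langle\mu^n_t,\phi\rangle-\langle\mu_t,\phi\rangle$ I would argue that $(\nu,t)\mapsto\langle\nu_t,\phi\rangle$ is continuous on $\rC\times[0,T]$ (from the triangle inequality for $\sW_1$ and $\phi\in C_b$), hence uniformly continuous on the compact set $(\{\mu^n\}_n\cup\{\mu\})\times[0,T]$, which gives $\sup_t|\langle\mu^n_t,\phi\rangle-\langle\mu_t,\phi\rangle|\to0$. For the two time integrals the integrands are uniformly bounded (boundedness of $\lambda_2$, $\theta$, $\sigma^\epsilon$, $\partial_v\phi$, $\partial_v^2\phi$), so $\sup_t\big|\int_0^t(\cdots)\,\ud s\big|\le\int_0^T|\cdots|\,\ud s$ and, by dominated convergence in $s$, it suffices to prove for a.e.\ $s$ the pointwise convergences $\langle\mu^n_s,\tfrac{(\sigma_2^\epsilon)^2}{2}\partial_v^2\phi\rangle\to\langle\mu_s,\tfrac{(\sigma_2^\epsilon)^2}{2}\partial_v^2\phi\rangle$ and $\langle\mu^n_s,b(\mu^n_s)\partial_v\phi\rangle\to\langle\mu_s,b(\mu_s)\partial_v\phi\rangle$. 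Fix such an $s$, in the full-measure set where $\pi_v\mu_s$ has an $L^2$ density and is therefore absolutely continuous. The $\sigma$-term converges because $(\sigma_2^\epsilon)^2\partial_v^2\phi$ is continuous and bounded. For the $b$-term I would split
\[
\langle\mu^n_s,b(\mu^n_s)\partial_v\phi\rangle=\langle\mu^n_s,\lambda_2\partial_v\phi\rangle+\int_{(D\times\bT)^2}g_2(x,v,y,w)\,\partial_v\phi(x,v)\,(\mu^n_s\otimes\mu^n_s)(\ud x,\ud v,\ud y,\ud w),
\]
use that $\mu^n_s\otimes\mu^n_s\to\mu_s\otimes\mu_s$ narrowly on the Polish space $(D\times\bT)^2$, and invoke the standard extension of the portmanteau theorem to bounded Borel functions whose set of discontinuities carries zero mass under the limiting measure: the discontinuity set of $\lambda_2\partial_v\phi$ is contained in $D\times\{0,1\}$, of $\mu_s$-measure $\pi_v\mu_s(\{0,1\})=0$, and that of $(x,v,y,w)\mapsto g_2(x,v,y,w)\partial_v\phi(x,v)$ is contained in $\{v\in\{0,1\}\}\cup\{w\in\{1,1+\delta\}\}$, of $(\mu_s\otimes\mu_s)$-measure at most $\pi_v\mu_s(\{0,1\})+\pi_v\mu_s(\{1,1+\delta\})=0$. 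This yields continuity of $\Phi^{\mu^0}_\phi$ at every point of $\widetilde{X}$.

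For the consequence, $\Phi^{\mu^0}_\phi$ is bounded, Borel, and its set of discontinuity points is contained in $\widetilde{X}^\complement$, which is Borel by Lemma \ref{lem:53} and satisfies $Q(\widetilde{X}^\complement)=0$; applying once more the same generalized portmanteau theorem, now on $\rC$ with $Q^N\to Q$, gives $\int_\rC\Phi^{\mu^0}_\phi\,\ud Q^N\to\int_\rC\Phi^{\mu^0}_\phi\,\ud Q$. The conceptual heart of the argument --- and the reason the class $\widetilde{X}$ must enter --- is that the discontinuous coefficients preclude continuity of $\Phi^{\mu^0}_\phi$ on all of $\rC$, so one must instead kill their jump sets using absolute continuity of the $v$-marginal; the only genuine additional wrinkle is the nonlinear double-integral term, which forces the passage to narrow convergence of product measures, while the upgrade from pointwise-in-$t$ to uniform-in-$t$ convergence is a routine compactness argument.
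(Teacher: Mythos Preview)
Your proof is correct and follows the same overall skeleton as the paper's: reduce to pointwise-in-$s$ convergence of the integrands via dominated convergence, then use the generalized portmanteau (bounded Borel functions continuous off a limit-null set) to handle the discontinuous coefficients, and apply the same device again on $\rC$ for the consequence.

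The one genuine difference is your treatment of the nonlinear term. The paper splits
\[
\langle\mu^n_s,\langle\mu^n_s,g_2\rangle\partial_v\phi\rangle-\langle\mu_s,\langle\mu_s,g_2\rangle\partial_v\phi\rangle
=\langle\mu^n_s,\langle\mu^n_s-\mu_s,g_2\rangle\partial_v\phi\rangle+\langle\mu^n_s-\mu_s,\langle\mu_s,g_2\rangle\partial_v\phi\rangle,
\]
handles the second summand exactly as the $\lambda_2$ term, and for the first summand introduces $g_n(x)=\int\theta(x,x')\ind_{[1,1+\delta]}(v')\,\mu^n_s(\ud x',\ud v')$ and argues that $g_n\to g$ \emph{uniformly} on compacts, which requires a tightness step and the assumed uniform continuity of $\theta$. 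Your route via the product measure $\mu^n_s\otimes\mu^n_s\to\mu_s\otimes\mu_s$ and a single application of portmanteau on $(D\times\bT)^2$ is shorter, avoids the tightness/equicontinuity detour, and in fact uses only continuity (not uniform continuity) of $\theta$. The paper's decomposition, on the other hand, makes more explicit where the structure $g_2(x,v,y,w)=\theta(x,y)\ind_{[0,1]}(v)\ind_{[1,1+\delta]}(w)$ is used and would adapt more readily if the outer measure were not the same as the inner one. Your measurability argument (monotone class) also differs from the paper's (approximation of $\lambda_2,g_2$ by continuous functions making $\Phi_2^n$ continuous), but both are routine.
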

\begin{proof} Since in the definition of $\Phi^{\mu^0}_{\phi}$ we can consider the supremum over rational numbers in $[0,T]$, to prove the measurability of  $\Phi^{\mu^0}_{\phi}$ we can fix $t\in[0,T]$ and study  separately the measurability of three functionals:
\begin{gather*}
\Phi_1 (\mu)= \left\langle \mu_{t}, \phi\right\rangle  -\left\langle \mu_{0},\phi\right\rangle, \\
\Phi_2 (\mu) = \int_{0}^{t}\left\langle \mu_{s},b\left(  \mu_{s}\right)  \partial_{v}\phi\right\rangle \ud s, \\
\Phi_3 (\mu)= \int_{0}^{t}\left\langle \mu_{s},\frac{\left(\sigma_{2}^{\epsilon}\right)  ^{2}}{2}\partial_{v}^{2}\phi\right\rangle \ud s,
\end{gather*}
for $\mu \in \rC$. Note that $\Phi_1$ and $\Phi_3$ are even continuous mappings on $\rC$. Concerning the measurability of $\Phi_2$ we first approximate pointwise the functions $\lambda_2$ and $g_2$ by regular functions $\lambda_2^n$ and $g_2^n$ (indeed $\lambda_2(\cdot)$ and $g_2(x,\cdot, y, \cdot )$ have only simple discontinuities) and then consider the corresponding functions $b^n$ given by 
\begin{equation} \label{dtt} 
b^n\left(  \mu_{s}\right)  \left(  x,v\right)  =\lambda_2^n\left(  v\right)+\left\langle \mu_{s},g_{2}^n \left(  x,v,\cdot,\cdot\right)  \right\rangle.
\end{equation}
It is not difficult to prove that for each $n$ the functional $\Phi_2^n: \rC \to \bR$, 
$$
\Phi_2^n(\mu) =  
\int_{0}^{t}\left\langle \mu_{s},b^n\left(  \mu_{s}\right)  \partial
_{v}\phi\right\rangle \ud s
$$
is continuous on $\rC$. By the dominated convergence theorem we deduce that $\Phi_2^n(\mu) \to \Phi_2(\mu)$ as $n \to \infty$, for any $\mu \in \rC$. This shows that also $\Phi_2$ is measurable.\\

Let now $\mu\in\widetilde{X}$ and $\mu^{n}\in \rC$ be given with $\mu^{n}%
\rightarrow\mu$ in $\rC$. This implies {$\mu_{t}^{n}\rightarrow\mu_{t}$ in weak sense,}
hence $\left\langle \mu_{t}^{n},\phi\right\rangle \rightarrow\left\langle
\mu_{t},\phi\right\rangle $, uniformly in $t\in[0,T]$. The convergence of $\left\langle \mu_{s}%
^{n},\frac{\left(  \sigma_{2}^{\epsilon}\right)  ^{2}}{2}\partial_{v}^{2}%
\phi\right\rangle $ to $\left\langle \mu_{s},\frac{\left(  \sigma
_{2}^{\epsilon}\right)  ^{2}}{2}\partial_{v}^{2}\phi\right\rangle $ for every
$s\in\left[  0,T\right]  $ is similar and, by Lebesgue dominated convergence
theorem, the last integral in the definition of $\Phi^{\mu^0}_{\phi}$ converges, uniformly in $t\in[0,T]$. It
remains to prove that the first integral converges. Again by Lebesgue
dominated convergence theorem, the problem is reduced to prove that, for a.e.
$s\in\left[  0,T\right]  $,
\[
\left\langle \mu_{s}^{n},b\left(  \mu_{s}^{n}\right)  \partial_{v}%
\phi\right\rangle \rightarrow\left\langle \mu_{s},b\left(  \mu_{s}\right)
\partial_{v}\phi\right\rangle .
\]
This is more difficult since $\lambda_{2}$ and $g_{2}$ contain
discontinuities. Since $\mu\in\widetilde{X}$, we know that $\pi_{v}\mu_{s}\ll\rL_\bT$ for
a.e. $s\in\left[  0,T\right]  $, thus in the sequel we restrict to such values of
$s$. 

Let us first explain why
\begin{equation}
\left\langle \mu_{s}^{n},\lambda_{2}\partial_{v}\phi\right\rangle
\rightarrow\left\langle \mu_{s},\lambda_{2}\partial_{v}\phi\right\rangle
.\label{first convergence}%
\end{equation}
The function $\left(  x,v\right)  \mapsto\lambda_{2}\left(  v\right)
\partial_{v}\phi\left(  x,v\right)  $ is bounded;\ and it is continuous except
on the set
\[
S=D\times\left\{0,1\right\}\subset D\times\bT  .
\]

These sets are exceptional for the measure $\mu_{s}$:%
\begin{align*}
\int_{S}\mu_{s}\left(  \ud x,\ud v\right)  &=\int_{D\times\{v=0\}  }\mu_{s}\left(  \ud x,\ud v\right)+\int_{D\times\{v=1\}}\mu_s\left(\ud x,\ud v\right)\\
 &=\int_{\{  v=0\}\cup\{v=1\}}\left(\pi_v\mu_s\right)\left(\ud v\right)=0\ .
\end{align*}

Now, the following fact is known: if a sequence of probability measures
$\rho_{n}$ on a Polish space $Y$ converges weakly to a probability measures
$\rho$ and $f:Y\rightarrow\bR $ is a bounded Borel measurable function,
continuous on a set $\widetilde{Y}\subset Y$ with $\rho\left(  \widetilde{Y}%
\right)  =1$, then $\int_{Y}fd\rho_{n}\rightarrow\int_{Y}fd\rho$. The proof is
easy using Skorohod representation theorem. We apply this fact with
$Y=D\times\bT $, $\rho_{n}=\mu_{s}^{n}$, $\rho=\mu_{s}$, $\widetilde{Y}%
=S^{c}$, $f=\lambda_{2}\partial_{v}\phi$ and deduce (\ref{first convergence}). 

Finally, let us explain why
\begin{equation}
\label{sww}
\left\langle \mu_{s}^{n},\left\langle \mu_{s}^{n},g_{2}\left(  x,v,\cdot
,\cdot\right)  \right\rangle \partial_{v}\phi\right\rangle -\left\langle
\mu_{s},\left\langle \mu_{s},g_{2}\left(  x,v,\cdot,\cdot\right)
\right\rangle \partial_{v}\phi\right\rangle \rightarrow0.
\end{equation}
The previous difference can be rewritten as the sum of two terms:%
\[
\left\langle \mu_{s}^{n},\left\langle \left(  \mu_{s}^{n}-\mu_{s}\right)
,g_{2}\left(  x,v,\cdot,\cdot\right)  \right\rangle \partial_{v}%
\phi\right\rangle
\]
and
\[
\left\langle \left(  \mu_{s}^{n}-\mu_{s}\right)  ,\left\langle \mu_{s}%
,g_{2}\left(  x,v,\cdot,\cdot\right)  \right\rangle \partial_{v}%
\phi\right\rangle .
\]
The convergence to zero of the second term is similar to
(\ref{first convergence}), because the function
\begin{align*}
D\times\bT\ni\left(  x,v\right)    & \mapsto\left\langle \mu_{s},g_{2}\left(
x,v,\cdot,\cdot\right)  \right\rangle =\int_{D\times\bT} g_{2}\left(  x,v,x^{\prime
},v^{\prime}\right)  \mu_{s}\left(  \ud x^{\prime},\ud v^{\prime}\right)  \\
& =\ind_{\left[  0,1\right]  }\left(  v\right)  \int_{D\times\bT}\theta\left(  x,x^{\prime}\right)  \ind_{[  1,1+\delta]  }\left(  v^{\prime}\right)  \mu_{s}\left(  \ud x^{\prime},\ud v^{\prime}\right)
\end{align*}
is continuous on $S^{c}$. To treat the first term in the sum, we first fix {$\tau >0$}. By the weak convergence, we know that $(\mu_s^n)$ is tight and so  there exists a compact set $K_{\tau} \subset D $ such that 
$$
\mu_s^n ((K_{\tau} \times \bT)^c) < \tau,\;\; \mu_s ((K_{\tau}\times \bT)^c) < \tau,\;\;\; n \ge 1. 
$$
We  have 
\begin{align*}
\left\vert \left\langle \mu_{s}^{n},\left\langle 
\left(  \mu_{s}^{n}-\mu
_{s}\right)  ,g_{2}\left(  x,v,\cdot,\cdot\right)  \right\rangle \partial
_{v}\phi\right\rangle \right \vert  
\leq
 \tau  \Vert \partial_{v}  \phi \Vert _{\infty}
 \Vert \theta  \Vert _{\infty}
\\ 
+ \Vert \partial_{v}  \phi \Vert _{\infty} \int_{ K_{\tau} \times \bT 
}\left\vert \left\langle \left(  \mu_{s}^{n}-\mu_{s}\right)  ,g_{2}\left(
x,v,\cdot,\cdot\right)  \right\rangle \right\vert \mu_s^n (dx, dv) .
\end{align*}
Now, for any $(x,v) \in K_{\tau} \times \bT$, $n \ge 1$,
\begin{align*}
 |\left\langle (\mu_s^n - \mu_{s}),g_{2}\left(
x,v,\cdot,\cdot\right)  \right\rangle | 
\le \left\vert
g_{n}\left(  x\right)  -g\left(  x\right)  \right\vert
\end{align*}
where 
\begin{align*}
g_{n}\left(  x\right)    & :=\int_{D\times\bT}\theta\left(  x, x^{\prime}\right)
\ind_{\left[  1,1+\delta\right]  }\left(  v^{\prime}\right)  \mu_{s}%
^{n}\left(  \ud x^{\prime},\ud v^{\prime}\right),  \\
g\left(  x\right)    & :=\int_{D\times\bT}\theta\left(  x,x^{\prime}\right)  \ind_{\left[
1,1+\delta\right]  }\left(  v^{\prime}\right)  \mu_{s}\left(
\ud x^{\prime},\ud v^{\prime}\right)  .
\end{align*}
To check \eqref{sww} we  have to prove that $g_{n}\rightarrow g$ uniformly on $K_{\epsilon}$. We know it
converges pointwise, by the same argument used above for
(\ref{first convergence}), because the function $\bT\ni v^{\prime}\mapsto \ind_{\left[
1,1+\delta\right]  }\left(  v^{\prime}\right)  $ is continuous apart in $v'=1$ and $v'=1+\delta$. Uniform convergence then follows from the fact that the family
$\left\{  g_{n}\right\}  $ is equi-bounded and equi-uniformly continuous; the last fact is
 a consequence of the assumption that $\theta$ is uniformly continuous on $D \times D$. 

This completes the proof of the first claim of the lemma. The second claim is
a simple consequence using the convergence criterion recalled above in this
proof, applied with $Y=X$, $\widetilde{Y}=\widetilde{X}$, $\rho_{n}=Q^{n}$,
$\rho=Q$, $f=\Phi^{\mu^0}_{\phi}$.
\end{proof}
\begin{lemma}
\label{lemma X tilde}Recall that $\bQ^{N}$ are the laws on $\rC$ of the empirical process
$S^{N}$. If $\bQ$ is a weak limit point of any subsequence of $\left\{
\bQ^{N}\right\}  $ then
\[
\bQ\left(  \widetilde{X}\right)  =1\text{.}%
\]

\end{lemma}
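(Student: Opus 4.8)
The plan is to push the uniform density bound \eqref{344} through to the weak limit $\bQ$ and then invoke the criterion \eqref{dee}. Since $\widetilde X$, defined in \eqref{eq:xx}, is Borel in $\rC$ by Lemma \ref{lem:53}, the statement $\bQ(\widetilde X)=1$ is meaningful. For each $n\in\bN$ I would introduce the functional
\[
G_n(\mu):=\int_0^T\bigl\|\gamma_n*\pi_v\mu_t\bigr\|_{L^2(\bT)}^2\,\ud t,\qquad\mu\in\rC,
\]
where $\gamma_n*\pi_v\mu_t$ denotes the bounded (hence $L^2(\bT)$) function $v\mapsto\langle\mu_t,\gamma_n(v-\cdot)\rangle$. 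Because $u_t^{n,N}=\gamma_n*\pi_vS_t^N$, one has $G_n(S^N)=\int_0^T\|u_t^{n,N}\|_{L^2(\bT)}^2\,\ud t$, so estimate \eqref{344} gives, for every fixed $n$ and every $N$ with $\alpha_n^{-3}\le N$,
\[
\int_{\rC}G_n\,\ud\bQ^N=\bE\int_0^T\bigl\|u_t^{n,N}\bigr\|_{L^2(\bT)}^2\,\ud t\le T\,C_\epsilon .
\]

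The first step would be to check that, for each fixed $n$, $G_n$ is bounded and continuous on $\rC$. Boundedness is immediate from $\|\gamma_n*\pi_v\mu_t\|_\infty\le\|\gamma_n\|_\infty$, which yields $G_n(\mu)\le 2T\|\gamma_n\|_\infty^2$ for all $\mu\in\rC$. For continuity, if $\mu^k\to\mu$ in $\rC$, i.e. $\sup_t\sW_1(\mu_t^k,\mu_t)\to0$, then, the projection onto the $v$-component being $1$-Lipschitz, also $\sup_t\sW_1(\pi_v\mu_t^k,\pi_v\mu_t)\to0$; since $\gamma$ is smooth, $\gamma_n$ is Lipschitz on $\bT$, so by Kantorovich--Rubinstein duality $\|\gamma_n*\pi_v\mu_t^k-\gamma_n*\pi_v\mu_t\|_\infty\le\mathrm{Lip}(\gamma_n)\,\sW_1(\pi_v\mu_t^k,\pi_v\mu_t)$ uniformly in $t$, whence $\sup_t\|\gamma_n*\pi_v\mu_t^k-\gamma_n*\pi_v\mu_t\|_{L^2(\bT)}\to0$, and — all these quantities being bounded by $\sqrt2\,\|\gamma_n\|_\infty$ — $G_n(\mu^k)\to G_n(\mu)$.

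Next I would pass to the limit. Writing $\bQ=\lim_j\bQ^{N_j}$ weakly along the given subsequence, and noting that $\alpha_n^{-3}\le N_j$ for $j$ large, boundedness and continuity of $G_n$ give $\int_{\rC}G_n\,\ud\bQ=\lim_j\int_{\rC}G_n\,\ud\bQ^{N_j}\le T\,C_\epsilon$ for every $n$. Fatou's lemma, applied first in $n$ and then in $t$, then yields
\[
\int_{\rC}\Bigl(\int_0^T\liminf_{n\to\infty}\bigl\|\gamma_n*\pi_v\mu_t\bigr\|_{L^2(\bT)}^2\,\ud t\Bigr)\ud\bQ(\mu)\le\liminf_{n\to\infty}\int_{\rC}G_n\,\ud\bQ\le T\,C_\epsilon<\infty ,
\]
so that, for $\bQ$-a.e. $\mu$ and a.e. $t\in[0,T]$, $\liminf_n\|\gamma_n*\pi_v\mu_t\|_{L^2(\bT)}<\infty$. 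Finally I would conclude by the argument behind \eqref{dee}, which applies verbatim along a subsequence realising this $\liminf$: using $\|\gamma_{n_k}\|_{L^1(\bT)}=1$, the convergence $\gamma_{n_k}*\phi\to\phi$ in $L^2(\bT)$ for $\phi\in C(\bT)$, and the Riesz theorem, one gets $\pi_v\mu_t\ll\rL_\bT$ with density in $L^2(\bT)$. Hence $\mu\in\widetilde X$ for $\bQ$-a.e. $\mu$, i.e. $\bQ(\widetilde X)=1$.

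I expect the only non-routine point to be the continuity of $G_n$ on $\rC$, which nonetheless reduces, as sketched, to the Lipschitz regularity of the fixed mollifier $\gamma_n$ and the $1$-Lipschitz continuity of the marginal map $\mu\mapsto\pi_v\mu$; upgrading the ``$\sup$'' in \eqref{dee} to a ``$\liminf$'' is a harmless repetition of the Riesz argument along a subsequence. A heavier alternative would be to prove joint tightness of the laws of $(S^N,u^N)$ on $\rC\times L^2(0,T;L^2(\bT))$, use a Skorokhod representation, identify the limit of $u^N$ with the $L^2$ density of the $v$-marginal of the limit of $S^N$ (since $\gamma_N*\cdot\to\cdot$), and read off the $L^2$ bound from Lemma \ref{lem:51}; the route above avoids this.
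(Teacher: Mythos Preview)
Your argument is correct and follows a genuinely different, lighter route than the paper's own proof. The paper proceeds exactly via the ``heavier alternative'' you mention at the end: it establishes joint tightness of the laws of $(S^N,u^N)$ on $\rC\times L^2\bigl(0,T;L^2(\bT)\bigr)$ (this requires both the $L^2$ energy bound of Lemma~\ref{lem:51} and the time-regularity estimate of Lemma~\ref{lem:52}, together with the Aubin--Lions compactness), applies Skorokhod's representation to obtain a.s.\ convergent copies $(\widehat S^{N_k},\widehat u^{N_k})\to(\widehat S,\widehat u)$, and then identifies $\widehat u_t$ as the density of $\pi_v\widehat S_t$ by passing to the limit in the defining relation $u_t^{N_k}=\gamma_{N_k}*\pi_vS_t^{N_k}$.

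Your approach is precisely the shortcut the paper itself hints at in the Remark following Lemma~\ref{lem:52}: decouple the mollifier scale $n$ from the particle number $N$, use the uniform bound \eqref{344}, and push it through the weak limit via the continuity of the fixed-$n$ functional $G_n$. This bypasses Lemma~\ref{lem:52}, the Aubin--Lions lemma, and the Skorokhod representation entirely; the only ingredients are the Lipschitz regularity of each $\gamma_n$, Kantorovich--Rubinstein duality, Fatou, and the Riesz criterion \eqref{dee} (applied along a subsequence, which is indeed harmless). What the paper's route buys in exchange for the extra machinery is an explicit identification of the limit density $\widehat u$ as the $L^2$-limit of the mollified empirical densities, whereas your argument only establishes existence of an $L^2$ density; for the purposes of Lemma~\ref{lemma X tilde}, however, that is all that is needed.
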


\begin{proof}
\textbf{Step 1}. We have already proved not only tightness of the
family $\left\{  \bQ^{N}\right\}  $ in $\rC$ (see Section \ref{section:Q_tight}) but also tightness of the family of
laws of $u^{N}$ in $\sH:=L^{2}\left(  \left[  0,T\right]  \times\bT\right)  $, where $u_{t}^{N}\left(  v\right)  =\int_{D\times
\bT}\gamma_{N}\left(  v-v^{\prime}\right)  S_{t}^{N}\left(
\ud x^{\prime},\ud v^{\prime}\right)  $ (see the end of Section \ref{subsection:estimates}). Consider the pair $\left(  S^{N}%
,u^{N}\right)  $ with values in $\rC\times \sH$; their laws $\rho^{n}%
=\mathcal{L}\left(  S^{N},u^{N}\right)  $ form a tight family in $\rC\times \sH$.
Given a weak limit point $\bQ$ of $\left\{  \bQ^{N}\right\}  $, there is thus a
subsequence $N_{k}$ such that $\rho^{N_{k}}$ converges weakly to a probability
measure $\rho$ on $\rC\times \sH$, with marginal $\bQ$ on $\rC$. 

By the Skorohod embedding theorem there exists a new probability space $\left(
\widehat{\Omega},\widehat{\sF},\widehat{\bP}\right)  $, $\rC\times \sH$-valued random variables
$\left(  \widehat{S}^{N_{k}},\widehat{u}^{N_{k}}\right)  $ and $\left(
\widehat{S},\widehat{u}\right)  $, with laws $\rho^{N_{k}}$ and $\rho$
respectively, such that $\left(  \widehat{S}^{N_{k}},\widehat{u}^{N_{k}%
}\right)  \rightarrow\left(  \widehat{S},\widehat{u}\right)  $ in the strong
topology of $\rC\times \sH$, with $\widehat{\bP}$-probability one. Notice that the
law of $\widehat{S}$ is $\bQ$. 

\textbf{Step 2}. Let $\mathcal{D}$ be a countable dense family in $C\left(
\bT\right)  $. Let $\rL_{\left[  0,T\right]  }$ be the Lebesgue
measure on $\left[  0,T\right]  $. We claim that, given $\phi\in C\left(
\bT\right)  $, $\left(  \widehat{\bP}\otimes \rL_{\left[
0,T\right]  }\right)  $-almost everywhere,
\begin{equation}
\int_{\bT}\phi\left(  v\right)  \left(  \pi_{v}\widehat{S}%
_{t}\right)  \left(  \ud v\right)  =\int_{\bT}\phi\left(  v\right)
\widehat{u}_{t}\left(  v\right)  \ud v.\label{have density}%
\end{equation}
To prove this claim, let us start from the definition
\begin{equation}
u_{t}^{N_{k}}\left(  v\right)  :=\int_{D\times\bT}\gamma_{N_k}\left(  v-v^{\prime}\right)  S_{t}^{N_{k}}\left(  \ud x^{\prime},\ud v^{\prime}\right), \;\; t \in [0,T],\; v \in \bT.
\label{relation u s}%
\end{equation}
Note that this implies that 
$$
0= \bE \int_0^T dt \int_{\bT} \Big |u_{t}^{N_{k}}\left(  v\right)  - \int_{D\times\bT}\gamma_{N_k}\left(  v-v^{\prime
}\right)  S_{t}^{N_{k}}\left(  \ud x^{\prime},\ud v^{\prime}\right) \Big |^2 dv 
$$
$$
=  \widehat{\bE} \int_0^T dt \int_{\bT} \Big | \widehat u_{t}^{N_{k}}\left(  v\right)  - \int_{D\times\bT}\gamma_{N_k}\left(  v-v^{\prime
}\right)  \widehat S_{t}^{N_{k}}\left(  \ud x^{\prime},\ud v^{\prime}\right) \Big |^2 dv. 
$$
It follows that 
given $N_{k}$, with $\widehat{\bP}$-probability one,
\[
\widehat{u}_{t}^{N_{k}}\left(  v\right)  =\int_{D\times\bT%
}\gamma_{N_{k}}\left(  v-v^{\prime}\right)  \widehat{S}_{t}^{N_{k}}\left(
\ud x^{\prime},\ud v^{\prime}\right)
\]
as an identity in $\sH$. 
 We can also say that 
$\left(  \widehat{\bP}\otimes \rL_{\left[
0,T\right]  }\right)  $-almost everywhere, for any $k \in \bN$
the previous identity holds in $L^2(\bT)$.
Therefore given $\phi\in\mathcal{D}$, we have
\[
 \int_{\bT }\phi\left(  v\right)  \widehat{u}_{t}^{N_{k}}\left(
v\right)  \ud v=  \int_{D\times\bT }\left(  \int_{\bT 
}\gamma_{N_{k}}\left(  v-v^{\prime}\right)  \phi\left(  v\right)  \ud v\right)
\widehat{S}_{t}^{N_{k}}\left(  \ud x^{\prime},\ud v^{\prime}\right).
\]
Up to passing to a subsequence $N_{k}^{\prime}$ we can assume that
$\widehat{u}^{N_{k}^{\prime}}$ converges to $\widehat{u}$ in the strong
topology of $L^{2}\left(  \bT \right)  $, for $\left(
\widehat{\bP}\otimes \rL_{\left[  0,T\right]  }\right)  $-a.e. $\left(
\widehat{\omega},t\right)  \in\widehat{\Omega}\times\left[  0,T\right]  $.
Therefore $\int_{\bT }\phi\left(  v\right)  \widehat{u}_{t}^{N^\prime_{k}%
}\left(  v\right)  \ud v$ converges to $\int_{\bT }\phi\left(
v\right)  \widehat{u}_{t}\left(  v\right)  dv$ for $\left(  \widehat{\bP}\otimes
\rL_{\left[  0,T\right]  }\right)  $-a.e. $\left(  \widehat{\omega},t\right)
\in\widehat{\Omega}\times\left[  0,T\right]  $. And, for $\widehat{\bP}$-a.e.
$\widehat{\omega}\in\widehat{\Omega}$, for every $t\in\left[  0,T\right]  $,
$\widehat{S}_{t}^{N_{k}^{\prime}}$ converges in the weak topology of
probability measures to $\widehat{S}_{t}$, so
\begin{align*}
\lim_{k\rightarrow\infty}\int_{D\times\bT }\left(
\int_{\bT }\gamma_{N_{k}^{\prime}}\left(  v-v^{\prime}\right)
\phi\left(  v\right)  \ud v\right)  \widehat{S}_{t}^{N_{k}^{\prime}}\left(
\ud x^{\prime},\ud v^{\prime}\right)    & =\int_{D\times\bT }%
\phi\left(  v^{\prime}\right)  \widehat{S}_{t}\left(  \ud x^{\prime},\ud v^{\prime
}\right)  \\
& =\int_{\bT }\phi\left(  v\right)  \left(  \pi_{v}\widehat{S}%
_{t}\right)  \left(  \ud v\right)
\end{align*}
where we have used the property%
\[
\lim_{k\rightarrow\infty}\int_{\bT }\gamma_{N_{k}^{\prime}}\left(
v-v^{\prime}\right)  \phi\left(  v\right)  \ud v=\phi\left(  v^{\prime}\right), \; \text{ uniformly in }v^\prime\in\bT\ ,
\]
because $\phi$ is continuous { and $\gamma_N$ are mollifiers on the torus}. This proves (\ref{have density}).

\textbf{Step 3}. Since $\mathcal{D}$ is
countable, property (\ref{have density}) holds true uniformly in $\phi
\in\mathcal{D}$. Hence we can say that, for $\left(  \widehat{\bP}\otimes
\rL_{\left[  0,T\right]  }\right)  $-a.e. $\left(  \widehat{\omega},t\right)
\in\widehat{\Omega}\times\left[  0,T\right]  $, $\pi_{v}\widehat{S}_{t}\left(
\widehat{\omega}\right)  \ll\rL_{\bT }$ with density in $L^2\left(\bT\right)$. This implies that for
$\widehat{\bP}$-a.e. $\widehat{\omega}\in\widehat{\Omega}$, we have the property
that $\pi_{v}\widehat{S}_{t}\left(  \widehat{\omega}\right)  \ll\rL_{\bT}$ with density in $L^2\left(\bT\right)$ for $\rL_{\left[  0,T\right]  }$-a.e. $t\in\left[  0,T\right]  $. Hence
$\widehat{\bP}\left(  \widehat{\omega}\in\widehat{\Omega}:\widehat{S}\left(
\widehat{\omega}\right)  \in\widetilde{X}\right)  =1$, which implies $\bQ\left(
\widetilde{X}\right)  =1$ (recall that the law of $\widehat{S}$ is $\bQ$). The
proof is complete.
\end{proof}

Let us eventually study the nonlinear Fokker-Planck equation (\ref{eq:FP2}), that is
\begin{equation*}
%  \label{FP eq}\tag{\ref{eq:FP2}$^\prime$}
\partial_{t}\mu_{t}+\partial_{v}\left(\mu_t b\left(  \mu_{t}\right)\right)
=\partial_{v}^{2}\left(  \frac{\left(  \sigma_{2}^{\epsilon}\right)  ^{2}}%
{2}\mu_{t}\right),
\end{equation*}
with initial condition $\mu^{0}$, where $b\left(  \mu_{t}\right)  $ is given by \eqref{eq:notation_b}. 
%Recall from Lemma \ref{lem:53} the space 
%$$\Lambda = \left\{  \mu\in %\mathrm{Pr}_{1}\left(D\times\bT\right)\colon\pi_{v}\mu\ll\rL_\bT\; %\text{with} \; \frac{\ud\left(\pi_{v}\mu\right)}{\ud\rL_\bT} \in L^2 %(\bT) \right\}.$$
\begin{theorem}
\label{thm:FP}
Let $\mu^0=\nu\times\tilde\rho_0\rL_\bT 
%\in \mathrm{Pr}_{1}\left(D\times\bT\right)
$ with $\nu \in \mathrm{Pr}_{1}(D)$ and $\tilde \rho \in L^2(0,2)$ (cf. Section 1.1).
 %be given. 
Then:
\begin{enumerate}[label=\roman{*}$)$]
\item the nonlinear Fokker-Planck equation \eqref{eq:FP2}, with initial condition
$\mu^{0}$, has one and only one weak measure-valued solution $\mu \in \rC$; this
measure belongs to the space $\widetilde{X}$ (see \eqref{eq:xx});
\item 
%choose now $\mu^0=\nu\times\tilde\rho_0\rL_\bT$ and 
%let $\mu\in\widetilde{X}$ be the corresponding solution to \eqref{eq:FP2}; 
let $\bQ^{N}$ be the laws on $\rC$ of the empirical process $S^{N}$: then
$\bQ^{N}$ converges weakly to $\delta_{\mu}$. Moreover, $S^{N}$ converges in
probability to $\mu$, in the topology of $\rC$. 
\end{enumerate}
\end{theorem}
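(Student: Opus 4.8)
The plan is to assemble the ingredients already established. By Theorem~\ref{thm:tight1} the family $\{\bQ^N\}$ is tight in the Polish space $\rC$, hence (Prokhorov) relatively compact; pick any weak limit point $\bQ$ and a subsequence with $\bQ^{N_k}\to\bQ$ weakly. Lemma~\ref{lemma X tilde} gives $\bQ(\widetilde X)=1$. The heart of the argument is to show that $\bQ$ is carried by the set of weak measure-valued solutions of \eqref{eq:FP2} with initial datum $\mu^0$. Fix $\phi\in\sT$. Comparing the It\^o formula \eqref{eq:Ito_SN} (written with the notation \eqref{eq:notation_b} for $b$) with the definition \eqref{eq:dff} of the functional $\Phi^{\mu^0}_\phi$, the bracketed expression in $\Phi^{\mu^0}_\phi(S^N)$ equals $M^{N,\phi}_t+\bigl(\langle S^N_0,\phi\rangle-\langle\mu^0,\phi\rangle\bigr)$, so
\[
\Phi^{\mu^0}_\phi\bigl(S^N\bigr)\le\bigl|\langle S^N_0,\phi\rangle-\langle\mu^0,\phi\rangle\bigr|+\sup_{t\in[0,T]}\bigl|M^{N,\phi}_t\bigr|.
\]
Since the pairs $(\xi^i,\eta^i)$ are i.i.d.\ with law $\mu^0$ and $\phi$ is bounded, the law of large numbers gives $\bE\bigl|\langle S^N_0,\phi\rangle-\langle\mu^0,\phi\rangle\bigr|\to0$; and $\bE[\sup_t|M^{N,\phi}_t|^2]\to0$ was already noted after \eqref{eq:notaz}, so $\bE\sup_t|M^{N,\phi}_t|\to0$ by Cauchy--Schwarz. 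Hence $\int_\rC\Phi^{\mu^0}_\phi\,\ud\bQ^N\to0$.

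On the other hand, $\bQ(\widetilde X)=1$ is exactly the hypothesis needed to apply the second assertion of Lemma~\ref{lemma Phi}, which yields $\int_\rC\Phi^{\mu^0}_\phi\,\ud\bQ^{N_k}\to\int_\rC\Phi^{\mu^0}_\phi\,\ud\bQ$. Combining, $\int_\rC\Phi^{\mu^0}_\phi\,\ud\bQ=0$, i.e.\ for each fixed $\phi\in\sT$, $\bQ$-a.e.\ $\mu$ satisfies $\Phi^{\mu^0}_\phi(\mu)=0$. To pass from a single $\phi$ to all of $\sT$ one fixes beforehand a countable family $\sT_0\subset\sT$ such that every $\phi\in\sT$ is a pointwise limit of a sequence $\phi_n\in\sT_0$ with $\phi_n,\partial_v\phi_n,\partial_v^2\phi_n$ uniformly bounded and converging pointwise to $\phi,\partial_v\phi,\partial_v^2\phi$; such a family exists because $D\times\bT$ is $\sigma$-compact and, on each compact piece, the relevant $C^2$-in-$v$ functions form a separable space, after which one multiplies by smooth cut-offs. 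Because the measures involved are probabilities and $b(\mu_s)$ is bounded by $\|\lambda\|_\infty+\|\theta\|_\infty$, dominated convergence shows that \eqref{FP weak} for all $\phi\in\sT_0$ implies it for all $\phi\in\sT$. Intersecting over $\phi\in\sT_0$ and with $\widetilde X$ (all $\bQ$-full sets), we conclude that $\bQ$-a.e.\ $\mu$ is a weak measure-valued solution of \eqref{eq:FP2} with initial condition $\mu^0$ and belongs to $\widetilde X$; in particular such a solution exists.

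To finish: by Theorem~\ref{thm:xd} such a solution in $\rC$ is unique, call it $\mu$; since the one just produced lies in $\widetilde X$, so does $\mu$, which proves (i). Moreover the argument above applies to \emph{every} weak limit point $\bQ$, showing each of them is carried by $\widetilde X\cap\{\text{weak solutions}\}=\{\mu\}$, hence $\bQ=\delta_\mu$. Since $\{\bQ^N\}$ is tight and all its limit points equal $\delta_\mu$, the whole sequence converges: $\bQ^N\to\delta_\mu$ weakly. Equivalently, $S^N\to\mu$ in distribution in the metric space $\rC$; because the limit $\mu$ is deterministic, the portmanteau theorem (applied to the closed complements of balls around $\mu$) upgrades this to convergence in probability, which is (ii).

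The substance of the whole proof sits in the results quoted, above all Lemma~\ref{lemma Phi}, where the discontinuities of $\lambda_2$ and $g_2$ are neutralized using that the $v$-marginals have $L^2$ densities; the present theorem is mostly assembly. The one point requiring care is the reduction to a countable set of test functions: $\sT$ is not separable for the sup norm since $D$ may be unbounded, so one cannot argue by norm-density of $\sT_0$ and must instead use bounded pointwise approximation against probability measures. The remaining new estimates — vanishing of the martingale term and of the initial-data discrepancy in $\Phi^{\mu^0}_\phi(S^N)$ — are routine.
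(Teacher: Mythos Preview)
Your proof is correct and follows essentially the same route as the paper's: tightness, $\bQ(\widetilde X)=1$ from Lemma~\ref{lemma X tilde}, passage to the limit in $\int\Phi^{\mu^0}_\phi\,\ud\bQ^N$ via Lemma~\ref{lemma Phi}, reduction to a countable family of test functions, and then uniqueness to pin down $\bQ=\delta_\mu$. You are in fact slightly more careful than the paper on one point: you separate out the initial-data discrepancy $\langle S^N_0,\phi\rangle-\langle\mu^0,\phi\rangle$ and kill it with the law of large numbers, whereas the paper's displayed equality tacitly absorbs this term into the martingale.
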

\begin{proof} Let us consider the sequence $(\gamma_N)$ of Section \ref{subsection:estimates}.
 Note that 
 the $L^2$ norm of
\begin{equation*}
  u_0^N(v):=\int_{D\times\bT}\gamma_N(v-v^\prime)\mu_0(\ud x^\prime,\ud v^\prime) = \int_{\bT}\gamma_N(v-v^\prime)
   (\pi_{v}\mu_0)(\ud v^\prime)
\end{equation*}
is bounded uniformly in $N$.

From Section \ref{section:Q_tight} we know that the family $\left\{  \bQ^{N}\right\}  $ is tight.
Let $\left\{  \bQ^{N_{k}}\right\}  $ be a weakly convergence subsequence, with
limit $\bQ$. We are going to prove below that $\bQ$ is supported by the set of weak
measure-valued solutions of equation (\ref{eq:FP2}), with initial condition
$\mu^{0}$. This implies existence of at least one such solution. Uniqueness
has been proved in Section \ref{section:uniqueness}; recalling Lemma \ref{lemma X tilde}, we then
immediately have claim (i). Moreover, we also have $\bQ=\delta_{\mu}$, by the
uniqueness of weak measure-valued solutions; therefore, since any weak limit
point of $\left\{  \bQ^{N}\right\}  $ is the same measure $\delta_{\mu}$, it
follows that the full sequence $\left\{  \bQ^{N}\right\}  $ converges weakly to
$\delta_{\mu}$. Since $S^{N}$ converges in law to a constant, it also
converges in probability (in the topology of $\rC$). 

It remains to prove the claim made above that $\bQ$ is supported by the set of
weak measure-valued solutions of equation (\ref{eq:FP2}) with initial
condition $\mu^{0}$, i.e., the following set (cf. \eqref{eq:dff})
\[ \Sigma = 
\left\{  \mu\in \rC:\Phi^{\mu^0}_{\phi}\left(  \mu\right)  =0\text{ for all }\phi
\in\mathcal{T}\right\}  
\]
is a Borel subset of ${\cal C}$ and $\bQ(\Sigma)=1$. Arguing as in Remark \ref{maa} it is not difficult to show that given $\phi \in {\cal T}$ there exists a sequence $(\phi_n) \subset C_c^{2}(D \times \bT)$ (i.e. $\phi_n$ is a $C^2$ function with compact support) such that 
$\| \phi_n\|_{\infty}$ $+ \| \partial_v \phi_n  \|_{\infty} $ $+\| \partial_{vv}^2 \phi_n  \|_{\infty} $ 
 $ \le M$ (for some $M>0$ independent of $n$)
 and $\phi_n(z) \to \phi(z)$, $\partial_v \phi_n (z) \to 
 \partial_v \phi(z)$, $\partial_{vv}^2 \phi_n (z) \to 
 \partial_{vv}^2 \phi(z)$ as $n \to \infty$, for any $z = (x, v) \in D \times \bT$. Hence by the dominated convergence theorem we have
$$
\Sigma = \left\{  \mu\in \rC:\Phi^{\mu^0}_{\phi}\left(  \mu\right)  =0\text{ for all }\phi
\in C_c^{2} (D \times \bT)\right\}.
$$ 
Moreover, since there exists   a countable
set $H_0 \subset C^{2}_c(D \times \bT)$ such 
 that for any $\phi \in C^{2}_c(D \times \bT) $
we can find  a sequence $(\phi_k) \subset H_0$ satisfying
$$
 \lim_{k \to \infty} ( \| \phi - \phi_k \|_{\infty} +   \| \partial_v \phi_k -  \partial_v \phi  \|_{\infty} +   \| \partial_{vv}^2 \phi_k -  \partial_{vv}^2 \phi  \|_{\infty}) =0,
$$
we obtain that $\Sigma = \left\{  \mu\in \rC:\Phi_{\phi}\left(  \mu\right)  =0\text{ for all }\phi
\in H_0 \right\}$ which is a Borel subset of ${\cal C}$.
To finish the proof we need to prove that
\[
\bQ\left(  \mu\in \rC:\Phi^{\mu^0}_{\phi}\left(  \mu\right)  =0\right)  =1 \ ,
\]
for every $\phi\in H_0$. Since $\Phi^{\mu^0}_{\phi}\geq0$, it is equivalent to
prove $
\int_{\rC}\Phi^{\mu^0}_{\phi}\ud\bQ=0,
$
for every $\phi\in H_0$. Due to Lemma \ref{lemma Phi}, it is enough
to show
\begin{equation}
\lim_{N\rightarrow\infty}\int_{\rC}\Phi^{\mu^0}_{\phi}\ud\bQ^{N}=0\label{lim to be proved}%
\end{equation}
for every $\phi\in H_0$. Using identity (\ref{eq:Ito_SN}) we have%
\begin{align*}
& \int_{\rC}\Phi^{\mu^0}_{\phi}\ud\bQ^{N}\\
& =\bE\left[  \sup_{t\in\left[  0,T\right]  }\left\vert \left\langle
S_{t}^{N},\phi\right\rangle -\left\langle \mu^{0},\phi\right\rangle -\int%
_{0}^{t}\left\langle S_{s}^{N},b\left(  S_{s}^{N}\right)  \partial_{v}%
\phi\right\rangle \ud s-\int_{0}^{t}\left\langle S_{s}^{N},\frac{\left(
\sigma_{2}^{\epsilon}\right)  ^{2}}{2}\partial_{v}^{2}\phi\right\rangle
\ud s\right\vert \wedge1\right]  \\
& =\bE\left[  \sup_{t\in\left[  0,T\right]  }\left\vert M_{t}^{N,\phi
}\right\vert \wedge1\right]  \leq\bE\left[  \sup_{t\in\left[
0,T\right]  }\left\vert M_{t}^{N,\phi}\right\vert ^{2}\right]  ^{1/2}%
\end{align*}
where%
\[
M_{t}^{N,\phi}=\int_{0}^{t}\frac{1}{N}\sum_{i=1}^{N}\sigma_{2}^{\epsilon
}\left(  V_{s}^{i,N}\right)  \partial_{v}\phi\left(  X_0^{i},V_{s}^{i,N}\right)
\text{d}B_{s}^{i}.
\]
Therefore, from Doob's inequality and the boundedness of $\sigma_{2}%
^{\epsilon}$ and $\partial_{v}\phi$, for some constants generically denoted by
$C>0$, we have
\[
\left(  \int_{\rC}\Phi^{\mu^0}_{\phi}\ud\bQ^{N}\right)  ^{2}\leq C\int_{0}^{T}\frac{1}%
{N^{2}}\sum_{i=1}^{N}\bE\left[  \left\vert \sigma_{2}^{\epsilon}\left(
V_{s}^{i,N}\right)  \partial_{v}\phi\left(  X_0^{i},V_{s}^{i,N}\right)
\right\vert ^{2}\right]  \ud s\leq\frac{C}{N}%
\]
which implies (\ref{lim to be proved}) and completes the proof. 
\end{proof}

\section{The McKean-Vlasov SDE}
\label{section:vlasov}
Similarly to what is done in Section \ref{section:SDE}, let $B$ be a standard real Brownian motion defined on $(\Omega,\sF,\bP)$, let $\eta$ be a $\bT$-valued random variable independent of $B$ with density $\tilde\rho_0\in L^2(0,2)$ and denote by $\sG^0_t$ the augmentation of $\sigma(B_s,0\leq s\leq t)\vee\sigma(\eta)$ with the $\bP$-null sets.
Let then $\xi$ be a random variable with values in $D$ and law $\nu$, having finite first moment, independent of $\sG^0_t$ for every $t$. Finally denote by $\sG_t$ the completion of $\sG^0_t\vee\sigma(\xi)$.\\
Let us consider the so-called McKean-Vlasov SDE associated with the system (\ref{eq:system3})
\begin{equation}
  \label{eq:MKV}
  \begin{cases}
\ud V_t=\lambda_2(V_t)\ud t+\int_{D\times\bT} g_2(\xi,V_t,y,w)\mu_t(\ud y,\ud w)\ud t+\sigma_2^\epsilon(V_t)\ud B_t \\
\ud X_t=0\\
(X_0,V_0)=(\xi,\eta)\\
\mu_t=\sL(X_t,V_t)\ .\\
\end{cases}
\end{equation}
In analogy with Definition \ref{def:strongsolution}, we say that a strong solution to equation (\ref{eq:MKV}) is a family of continuous $\sG^0_t$-adapted processes $(V^x_t,\mu^x_t)$, $x\in D,$ with values in $\bR\times\mathrm{Pr}_1(D\times\bT)$ such that the mapping: $(x,\omega)\mapsto(V^x_\cdot (\omega),\mu^x_\cdot (\omega))$ is measurable $(D\times\Omega,\sB(D)\times\sF)\to(\sS\times\rC,\sB(\sS)\times\sB(\rC))$ and for $\nu$-almost every $x\in D$ the process $(V^x,\mu^x)$ satisfies (\ref{eq:MKV}) for $\xi=x$. If $(V^x,\mu^x)$ is a strong solution then $(\xi,V^{\xi}_t,\mu^{\xi}_t=\sL(\xi,V^{\xi}_t))$ is a well-defined continuous $\sG_t$-adapted process with values in $D\times\bR\times\mathrm{Pr}_1(D\times\bT)$ which satisfies equations (\ref{eq:MKV}) $\bP$-almost surely for any $t\in[0,T]$.
\begin{theorem}
\label{thm:MKV}  
There exist a unique strong solution $(V,\mu)$ to equation (\ref{eq:MKV}).
\end{theorem}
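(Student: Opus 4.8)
The plan is to exploit the fact that the nonlinearity in \eqref{eq:MKV} has already been resolved by Theorem~\ref{thm:FP}: that theorem provides a \emph{unique} weak measure-valued solution $\mu\in\rC$ of the nonlinear Fokker--Planck equation \eqref{eq:FP2} with initial condition $\mu^{0}=\nu\times\tilde\rho_0\rL_\bT$, and moreover $\mu\in\widetilde{X}$ (see \eqref{eq:xx}). I would take this $\mu$ and \emph{freeze} it in the interaction term, so that $(t,x,v)\mapsto b(\mu_t)(x,v)=\lambda_2(v)+\langle\mu_t,g_2(x,v,\cdot,\cdot)\rangle$ (cf.\ \eqref{eq:notation_b}) becomes a bounded, jointly Borel function depending on the parameter $x\in D$ measurably (indeed continuously, by uniform continuity of $\theta$, exactly as in Lemma~\ref{lemma Phi}). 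For each fixed $x$ the first line of \eqref{eq:MKV} with $\xi=x$ is then an SDE with bounded measurable time-dependent drift and nondegenerate coefficient $\sigma_2^\epsilon\in C^1_b$, i.e.\ precisely of the kind handled in Section~\ref{section:SDE}.

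Accordingly, I would first repeat the construction of Theorem~\ref{thm:SDEE1}: for each $x$ the frozen SDE has a unique strong solution $V^x$ by \cite{V81}, and the Euler-approximation argument based on Theorem~2.8 of \cite{GK96} used there yields a version of the family $(V^x)_{x\in D}$ for which $(x,\omega)\mapsto V^x_\cdot(\omega)$ is measurable from $(D\times\Omega,\sB(D)\times\sF)$ to $(\sS,\sB(\sS))$, each $V^x$ being $\sG^0_t$-adapted. Setting $X^x\equiv x$ and $\mu^x:=\mu$ for every $x$, the family $(V^x,\mu^x)$ then has the measurability demanded in the definition of strong solution of \eqref{eq:MKV} and satisfies, for $\nu$-a.e.\ $x$, the first three lines of \eqref{eq:MKV} with $\xi=x$.

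It remains to close the loop, i.e.\ to verify the consistency constraint $\mu_t=\sL(X_t,V_t)$, which is the delicate step. Let $\xi\sim\nu$ be independent of $(B,\eta)$ and put $\bar\mu_t:=\sL(\xi,V^\xi_t)$. Boundedness of the coefficients gives $\sup_{t\le T}\bE|V^\xi_t|<\infty$, and together with the finite first moment of $\xi$, the path-continuity of $V^\xi$ and the uniform integrability of its increments this shows $\bar\mu\in\rC$. Applying It\^o's formula to $\phi(\xi,V^\xi_t)$ for $\phi\in\sT$ and taking expectations gives
\begin{equation*}
\langle\bar\mu_t,\phi\rangle=\langle\mu^{0},\phi\rangle+\int_0^t\langle\bar\mu_s,b(\mu_s)\partial_v\phi\rangle\ud s+\int_0^t\Big\langle\bar\mu_s,\tfrac{(\sigma_2^\epsilon)^2}{2}\partial_v^2\phi\Big\rangle\ud s,
\end{equation*}
so $\bar\mu$ is a weak solution of the \emph{linear} Fokker--Planck equation with the frozen drift $b(\mu_\cdot)$ and datum $\mu^0$; and $\mu$ itself solves this same linear equation, since the identity \eqref{FP weak} expressing that $\mu$ solves the nonlinear equation is exactly the weak formulation of the linear one with drift $b(\mu_\cdot)$. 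I would then rerun the argument of the proof of Theorem~\ref{thm:xd}: pass from the weak to the mild formulation \eqref{eq:FPmild} (that computation only uses boundedness and measurability of the drift), and estimate $d_{\mathrm{TV}}(\bar\mu_t,\mu_t)$ exactly as there but \emph{with the term \eqref{eq:term1} absent}, which is legitimate because $\VV b(\mu_s)\partial_v T_{t-s}\phi\VV_\infty\le(\VV\theta\VV_\infty+\VV\lambda\VV_\infty)C(t-s)^{-1/2}$; the generalized Gronwall lemma then yields $\bar\mu=\mu$. Hence $(V^x,\mu)$ is a strong solution of \eqref{eq:MKV}, which proves existence.

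For uniqueness, if $(V,\mu)$ and $(\widetilde V,\widetilde\mu)$ are two strong solutions, the same It\^o computation (with the respective frozen drifts) shows that the measure flows $\mu=\sL(\xi,V^\xi_\cdot)$ and $\widetilde\mu=\sL(\xi,\widetilde V^\xi_\cdot)$ are both weak measure-valued solutions of the \emph{nonlinear} equation \eqref{eq:FP2} with the same initial datum $\mu^0$, whence $\mu=\widetilde\mu$ by Theorem~\ref{thm:xd}. Then, for $\nu$-a.e.\ $x$, $V^x$ and $\widetilde V^x$ solve the same SDE with bounded measurable drift $b(\mu_\cdot)(x,\cdot)$ and nondegenerate diffusion, so Veretennikov's pathwise uniqueness forces them to be indistinguishable for $\nu$-a.e.\ $x$. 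The main obstacle throughout is precisely the pair of consistency verifications --- that $t\mapsto\sL(\xi,V^\xi_t)$ genuinely lies in $\rC$, and that the It\^o--Fubini manipulation producing the weak formulation is valid despite the discontinuities of $\lambda_2$ and $g_2$ and the merely measurable dependence of the drift on $x$; for the latter I would lean, just as in Lemma~\ref{lemma Phi}, on $\mu\in\widetilde{X}$ (so that $\pi_v\mu_s$ has an $L^2$-density for a.e.\ $s$ and the jump sets $\{v=1\}$, $\{v=1+\delta\}$ are $\mu_s$-null) and on the measurability already established in Theorem~\ref{thm:SDEE1}.
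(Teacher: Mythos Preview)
Your proposal is correct and follows essentially the same route as the paper: freeze the unique nonlinear Fokker--Planck solution from Theorem~\ref{thm:FP}, solve the resulting parametrised SDE via the arguments of Section~\ref{section:SDE}, and close the consistency constraint $\mu_t=\sL(\xi,V^\xi_t)$ by uniqueness for the \emph{linear} Fokker--Planck equation (a simplification of Theorem~\ref{thm:xd}, exactly as you indicate). For uniqueness the paper argues per parameter value~$x$ (so that $\bar\mu^x$ solves \eqref{eq:FP2} with datum $\delta_x\times\tilde\rho_0\rL_\bT$) rather than globally via $\sL(\xi,V^\xi_\cdot)$ as you do, but this is a cosmetic difference and both conclude with Veretennikov's pathwise uniqueness.
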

\begin{proof}
 Fix a measure $\tilde\mu_\cdot\in \rC$ and set
  \begin{equation*}
    b(\tilde\mu_t)(x,v):=\lambda_2(v)+\int_{\bR^3}\int_\bR g_2(x,v,y,w)\tilde\mu_t(\ud w,\ud y)\ .
  \end{equation*}
By the same arguments as in Section \ref{section:SDE} it can be proved that there exist a unique strong solution $(V^{\tilde\mu,x})_x$ to the SDE
  \begin{equation}
    \label{eq:MKVthm}
    \begin{cases}
    \ud V^{\tilde\mu,x}_t=b(\tilde\mu_t)(x,V_t^{\tilde\mu,x})\ud t+\sigma_2^\epsilon(V^{\tilde\mu,x}_t)\ud B_t\ ,\\
    V^{\tilde\mu,x}_0=\eta\ .\\
  \end{cases}
\end{equation}
Hence for $X_0$ as in (\ref{eq:MKV}) we have that $V^{\tilde\mu}:=V^{\tilde\mu,X_0}$ satisfies
  \begin{equation}
    \label{eq:MKVthm2}
    \begin{cases}
    \ud V^{\tilde\mu}_t=b(\tilde\mu_t)(X_0,V_t^{\tilde\mu})\ud t+\sigma_2^\epsilon(V^{\tilde\mu}_t)\ud B_t\ ,\\
    V^{\tilde\mu}_0=\eta\ .
  \end{cases}
\end{equation}
Now choose as $\tilde\mu$ the unique weak solution in $\rC$ to the nonlinear Fokker-Planck PDE (\ref{eq:FP2}) with initial condition $\tilde\mu^0=\nu\times\tilde\rho_0\rL_\bT$; given the associated process $V^{\tilde\mu}$ as above, denote by $\mu_t$ the law of the vector $(\xi ,V^{\tilde\mu}_t)$. Then $\mu$ is a solution in $\rC$ to the linear PDE
\begin{equation*}
\begin{cases}
   \frac{\partial}{\partial t}\mu_t=\frac{1}{2}\frac{\partial^2}{\partial v^2}\left(\left(\sigma_2^\epsilon\right)^2\mu_t\right)-\frac{\partial}{\partial v}\big(\mu_tb(\tilde\mu_t)\big)\\
\mu_0=\tilde\mu^0\ .
\end{cases}
\end{equation*}
Since there is uniqueness of measure-valued solution to the latter (the proof being a simplified version of that of uniqueness in the nonlinear case; see Theorem \ref{thm:xd}), and clearly also $\tilde\mu$ is a solution, we necessarily have $\tilde\mu=\mu$ and $(V,\mu)$ is a strong solution to (\ref{eq:MKV}).

\smallskip
Let now $(\overline{V},\bar\mu)$ be another solution. Then $\bar\mu^x$ solves the Fokker-Planck equation (\ref{eq:FP2}) with initial condition $\delta_x \times \tilde\rho_0\rL_\bT$, for $\nu$-almost every $x\in D$, and therefore $\bar\mu^x=\mu^x$. Finally $V^x_t=\overline{V}^x_t$ a.s. for every $t$ and $\nu$-almost every $x\in D$, since they both satisfy a SDE like (\ref{eq:MKVthm2}) with $\tilde\mu=\mu^x$ for which strong uniqueness holds. Hence $(V,\mu)$ is the unique solution to (\ref{eq:MKV}).
\end{proof}

\appendix
\section{Appendix: Extension of some results}
\label{section:extension}
We state here some further results that are easy generalizations of what we presented so far; we will comment on the proofs when needed.

Notice that in Section \ref{section:SDE} neither the particular form of the coefficients nor the fact that they are $2$-periodic plays any role. Since the cited results we built our proof on apply to multidimensional SDE with bounded and measurable drift, we immediately obtain the following theorem, with proof identical to that of Theorem \ref{thm:SDEE1}. Here, similarly to what done previously, for $k,l,m,n\in\bN$ we consider a $m$-dimensional Brownian motions $\mathbf{W}$ and independent random variables $\Xi=\left(\Xi^j\right)_{j=1,\dots,n}$ and $\Psi=\left(\Psi^j\right)_{j=1,\dots,n}$ (independent from $\mathbf{W}$ as well) with values in $E^n\subseteq\left(\bR^k\right)^n$ and $\left(\bR^l\right)^n$, respectively ($E$ is an open subset of $\bR^k$), all defined on the common probability space $(\Omega,\sF,\bP)$ and with finite first moment. We assume that the $\Xi^j$'s are identically distributed with law $\bm{\nu}$ and the $\Psi^j$'s are identically distributed with absolutely continuous law $\bm{\rho}_0\rL_{\bR^l}$, with $\bm{\rho}_0\in L^2(\bR^l)$. We define the $\sigma$-algebra $\sE=\sigma(\Xi)$, the filtration $(\sA^0_t)_t$ as the augmentation of $\sigma(\mathbf{W}_s,0\leq s\leq t)\vee\sigma(\Phi)$ with the $\bP$-null sets and the filtration $(\sA_t)_t$ as the completion of $\sA_t^0\vee\sE$. Finally we fix $T>0$.
\begin{theorem}
  Let $\mathbf{b}:[0,T]\times E^n\times\bR^{l\times n}\times E\times\bR^l\to\bR^{l\times n}$ and $\bm{\sigma}:[0,T]\times\bR^{l\times n}\to\bR^{l\times n\times m}$ be bounded and Borel measurable functions. Assume that $\bm{\sigma}\bm{\sigma}^\top(t,\cdot)$ is uniformly elliptic and that $\bm{\sigma}(t,\cdot)$ is Lipschitz; both properties have to be satisfied uniformly in $t$. Then there exists a unique strong solution $\mathbf{Y}=\left(Y^j\right)_{j=1,\dots,n}$, in the sense of Definition \ref{def:strongsolution} and Theorem \ref{thm:SDEE1}, to the SDE
  \begin{equation}
  \label{eq:SDEgeneral}
  \begin{cases}
    \ud \mathbf{Y}_t=\left\langle\mathbf{b}\left(t,\Xi,\mathbf{Y}_t,\cdot,\cdot\right),\overline{S}^n_t\right\rangle\ud t+\bm{\sigma}(t,\mathbf{Y}_t)\ud \mathbf{W}_t\ ,\qquad t\in[0,T]\ ,\\
    \mathbf{Y}_0=\Psi
  \end{cases}
  \end{equation}
where
\begin{equation*}
  \overline{S}_t^n=\frac{1}{n}\sum_{j=1}^n\delta_{\left(\Xi^j,Y^j\right)}\ .
\end{equation*}
 \end{theorem}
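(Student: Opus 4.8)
The plan is to repeat, essentially verbatim, the proof of Theorem~\ref{thm:SDEE1}, since neither the specific form of the coefficients nor their $2$-periodicity was used there; I will indicate only the points that require attention. Denote by $\bm\nu_n$ the law of $\Xi$ on $E^n$. First I would fix the stochastic parameter $\mathbf{x}\in E^n$ and observe that equation~\eqref{eq:SDEgeneral} becomes an ordinary finite-dimensional SDE in $\bR^{l\times n}$ whose drift $(t,\mathbf{y})\mapsto\frac1n\sum_{j=1}^n\mathbf{b}(t,\mathbf{x},\mathbf{y},x^j,y^j)$ is bounded and Borel measurable and whose diffusion coefficient $\bm\sigma(t,\cdot)$ is bounded, Lipschitz and uniformly non-degenerate (uniformly in $t$). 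Under precisely these hypotheses Veretennikov's theorem~\cite{V81} yields a unique strong solution $\mathbf{Y}^{\mathbf{x}}$ and, in particular, pathwise uniqueness; the uniqueness assertion of the theorem then follows at once, applied for $\bm\nu_n$-almost every $\mathbf{x}$.

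The substantive step, exactly as in Theorem~\ref{thm:SDEE1}, is to produce a version of $\mathbf{Y}^{\mathbf{x}}$ depending measurably on $(\mathbf{x},\omega)$, since Veretennikov's construction is non-constructive. For this I would follow Gy\"ongy--Krylov~\cite{GK96}: fix a sequence of partitions $\pi^m$ of $[0,T]$ and introduce the Euler approximations $\mathbf{Y}^{\mathbf{x},m}$ of the frozen equation, which manifestly depend measurably on $\mathbf{x}$. Theorem~2.8 of~\cite{GK96} gives $\bP\bigl(|\mathbf{Y}^{\mathbf{x},m}-\mathbf{Y}^{\mathbf{x}}|_\sS>\delta\bigr)\to0$ for each $\mathbf{x}$ and $\delta>0$, and the boundedness of the coefficients gives the uniform-in-$m$ moment bounds $\bE\bigl[|\mathbf{Y}^{\mathbf{x},m}-\mathbf{Y}^{\mathbf{x}}|_\sS^p\bigr]\le C_p$; interpolating by H\"older's inequality upgrades this to convergence in $L^2(\Omega;\sS)$ for every $\mathbf{x}$, so $\mathbf{x}\mapsto\mathbf{Y}^{\mathbf{x}}$ is a pointwise limit of Borel maps and hence Borel into $L^2(\Omega;\sS)$. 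Dominated convergence then gives $\mathbf{Y}^{\cdot,m}\to\mathbf{Y}^{\cdot}$ in $\sZ:=L^2\bigl((E^n,\bm\nu_n);L^2(\Omega;\sS)\bigr)$, so $(\mathbf{Y}^{\cdot,m})_m$ is Cauchy there; since $\sZ\cong L^2(E^n\times\Omega,\bm\nu_n\times\bP;\sS)$ it converges to a limit $\tilde{\mathbf{Y}}^{\cdot}$ which is measurable from $(E^n\times\Omega,\sB(E^n)\times\sF)$ to $(\sS,\sB(\sS))$ and agrees with $\mathbf{Y}^{\mathbf{x}}$ in $\sS$, $\bP$-a.s., for $\bm\nu_n$-a.e.\ $\mathbf{x}$. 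This $\tilde{\mathbf{Y}}^{\cdot}$ is the required strong solution in the sense of Definition~\ref{def:strongsolution}.

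Beyond transcription, the only things to verify are that the reduction to the settings of~\cite{V81} and~\cite{GK96} is legitimate for the new, more general drift, and that the time-inhomogeneous versions of those results apply. The former is immediate: $\mathbf{b}$ is bounded and Borel and enters linearly through the empirical measure $\overline{S}^n_t$, so the frozen-parameter drift $(t,\mathbf{y})\mapsto\frac1n\sum_j\mathbf{b}(t,\mathbf{x},\mathbf{y},x^j,y^j)$ is still bounded and Borel, which is all those references require, while the uniform-ellipticity and Lipschitz hypotheses on $\bm\sigma$ are imposed exactly so that pathwise uniqueness holds. I do not expect a genuine obstacle: the only delicate point is the measurable-selection argument via Euler schemes, and it is wholly insensitive to the form of the coefficients, so it carries over unchanged; the time-dependence of $\mathbf{b}$ and $\bm\sigma$ is handled by quoting the time-inhomogeneous statements of~\cite{V81,GK96}, which hold under the same hypotheses.
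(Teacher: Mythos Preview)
Your proposal is correct and follows exactly the approach the paper prescribes: the paper states explicitly that the proof is ``identical to that of Theorem~\ref{thm:SDEE1}'' since neither the particular form of the coefficients nor their periodicity played any role there, and you have faithfully reproduced that argument (Veretennikov for each frozen $\mathbf{x}$, then the Gy\"ongy--Krylov Euler-scheme construction to obtain joint measurability in $(\mathbf{x},\omega)$). Your remark about quoting the time-inhomogeneous versions of \cite{V81,GK96} is the only addition needed beyond straight transcription, and it is apt.
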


Finer refinements are possible: for example one can treat the cases when $T=\infty$ and the SDE is to be solved on a domain $U\subset\bR^{l\times n}$, and the assumptions on $\bm{\sigma}$ can be weakened (cf. Remark \ref{rem:sigma}). We refer to \cite{GK96} for details.\\

Also when studying our Fokker-Planck PDE the periodicity of the coefficients plays no particular role, nor does the compactness of the torus $\bT$.\\
We need anyway some more assumptions on the coefficients $\mathbf{b}(t,x,y,x^\prime,y^\prime)$, $t\in [0,T]$, $x\in E^n$, $y\in \bR^{t\times n}$, $x^\prime\in E$, $y^\prime\in \bR^l$, and $\bm{\sigma}(t,y)$ above, namely:
\begin{enumerate}[label=$($E\arabic{*}$)$]
\item\label{E1} $\mathbf{b}$ does not depend on $t$, is bounded and uniformly continuous in $x$ and $x'$ (uniformly in  $y$ and $y'$) and 
the set of discontinuities of the map
  \begin{equation*}
    (y,y')\mapsto \mathbf{b}(x,y,x',y')
  \end{equation*}
has Lebesgue measure $0$ in $\bR^{l\times n}\times \bR^{l}$, for any $x,x' $;
\item\label{E2} $\bm{\sigma}$ does not depend on $t$ and belongs to $C^1_b(\bR^{l \times n}; \bR^{l\times n\times m})$.
\end{enumerate}
One can repeat the arguments of Sections \ref{section:uniqueness}, \ref{section:Q_tight}, \ref{section:PDE_existence} with minor modifications, working in the space $\mathrm{Pr}_1(E\times\bR^l)$ with the $1$-Wasserstein metric, using the Euclidean norm in place of the metric $d_{D\times\bT}$ and choosing all the test functions accordingly. If we solve equation (\ref{eq:SDEgeneral}) above for $\mathbf{Y}$ and consider the empirical measure $\overline{S}^n$, we can define the empirical density $\bar u^n$ as
\begin{equation*}
  \bar u^n_t(y)=\int_{E\times\bR^l}\bar\gamma_n(y-y^\prime)\overline{S}_t^n(\ud x^\prime,\ud y^\prime)
\end{equation*}
where $\left(\bar\gamma_n\right)$ is a family of compactly supported mollifiers in $\bR^l$.\\
We say that $f\in L^2_{\mathrm{loc}}(\bR^l)$ if $f\in L^2(K)$ for every compact set $K\subset \bR^l$. Consider a strictly increasing sequence $(P_j)_{j\in\bN}$ of compact sets in $\bR^l$ such that $\bR^l=\cup_j P_j$; then 
\begin{equation*}
  d(f,g) = \sum_{j \ge 1} \frac{1}{2^j} \frac{\|f-g \|_{L^2(P_j)}}{1 + \| f-g \|_{L^2(P_j)}}
\end{equation*}
is a metric on $L^2_{\mathrm{loc}}(\bR^l)$. Lemmata \ref{lem:51} and \ref{lem:52} apply to $\bar u^n$'s as well thanks to assumption \ref{E2}, and imply tightness of their laws in the space $L^2\left(0,T;L^2_{\mathrm{loc}}\left(\bR^l\right)\right)$ due to a generalized version of Aubin-Lions lemma, which claims that the space
\begin{equation*}
L^{2}\left(  0,T;W^{1,2}\left(  \bR^l \right)  \right)  \cap W^{\alpha
,2}\left(  0,T;H^{-2}\left(  \bR^l \right)  \right)
\end{equation*}
is relatively compact in $L^{2}\left(  0,T;L_{loc}^{2}\left(  \bR^l\right)  \right)  $.\\
Let $\sC:=C\left([0,T];\mathrm{Pr}_1\left(E\times\bR^l\right)\right)$. The space $\widetilde X$ of Subsection \ref{subsection:existence} has to be consequently substituted with
%\begin{multline*}
$$
\overset{\,\diamond}{X}=\Big\{\mu\in \sC \; : \; 
%\left([0,T];\mathrm{Pr}_1\left(E\times\bR^l\right)\right) \colon \\ 
 \pi_{v}\mu_{t}\ll\rL_{\bR^l}\; \text{with} \; \frac{\ud\left(\pi_{v}\mu_{t}\right)}{\ud \rL_\bR^l} \in L^2 (\bR^l),\;\text{ for a.e. }t\in\left[  0,T\right]  \Big\}\ .
%\end{multline*}
$$
To show that $\overset{\,\diamond}{X}$ is Borel it is enough to repeat the argument in the proof of Lemma \ref{lem:53} using the density of $C_c(\bR^l)$ in $L^2(\bR^l)$. The functionals $\Phi^{\mu^0}_\phi$ are now continuous on $\overset{\,\diamond}{X}$, because the proof of Lemma \ref{lemma Phi} can be repeated thanks to assumption \ref{E1}. To show that any limit point of the family of laws of the $\overline{S}^n$ gives full measure to $\overset{\,\diamond}{X}$ one can repeat the proof of Lemma \ref{lemma X tilde}, noting that it is enough to check identity (\ref{have density}) for $\phi\in C_b(\bR^l)$.\\
Now we can repeat verbatim the arguments in the proof of Theorem \ref{thm:FP} obtaining:
\begin{theorem}
  \label{thm:FP2}
Let $\zeta^0=\bm{\nu}\times\bm{\rho_0}\rL_{\bR^l}$.
%$\zeta^{0}\in \mathrm{Pr}_{1}\left(  E\times\bR^l \right)  $ be %given, such that the $L^2$ norm of
%\begin{equation*}
%  \bar %u_0^n(y):=\int_{E\times\bR^l}\bar\gamma_n(y-%y^\prime)\zeta^0(\ud %x^\prime,\ud y^\prime)
%\end{equation*}
%is bounded uniformly in $n$. 
Then:
\begin{enumerate}[label=\roman{*}$)$]
\item the nonlinear Fokker-Planck equation
\begin{equation}
\label{eq:FPbig}
  \partial_{t}\zeta_{t}+\operatorname{div}_{y}\left(\zeta_t \langle\mathbf{b},\zeta_{t}\rangle\right)= \text{Tr}\Big [D_{y}^{2}\left(  \frac{\bm{\sigma  \sigma^T} }{2}\zeta_{t}\right) \Big]
\end{equation}
with initial condition $\zeta^{0}$, has one and only one weak measure-valued solution $\zeta$; this measure belongs to the space $\overset{\,\diamond}{X}$.

\item 
%choose  and denote by $\zeta\in\overset{\,\diamond}{X}$ the %corresponding solution to \eqref{eq:FPbig}. 
Let $\bQ^{n}$ be the laws on $\sC$ of the empirical process $S^{n}$; then $\bQ^{n}$ converges weakly to $\delta_{\zeta}$. Further  $S^{n}$ converges in probability to $\zeta$, {in the topology of $\sC$.}
\end{enumerate}
\end{theorem}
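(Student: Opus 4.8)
The plan is to mirror, line by line, the proof of Theorem~\ref{thm:FP}, substituting at each stage the generalized statements collected above in this appendix. First I would observe that, by the extension of Section~\ref{section:Q_tight} (boundedness of $\mathbf{b}$ and $\bm{\sigma}$ together with exchangeability of the $Y^j$), the family $\{\bQ^n\}$ of laws of $\overline{S}^n$ on $\sC$ is tight; I then fix a subsequence with $\bQ^{n_k}\to\bQ$ weakly, and the goal is to show that $\bQ$ is the Dirac mass at the unique weak measure-valued solution of \eqref{eq:FPbig}.

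Next I would establish $\bQ(\overset{\,\diamond}{X})=1$ by repeating the proof of Lemma~\ref{lemma X tilde}. This uses the uniform bounds $\sup_t\bE\|\bar u^n_t\|_{L^2}^2+\bE\int_0^T\|\partial_y\bar u^n_t\|_{L^2}^2\,\ud t\le C$ and the $H^{-2}$ time-regularity estimate, i.e.\ the analogues of Lemmata~\ref{lem:51} and~\ref{lem:52}, which go through verbatim under assumption~\ref{E2}, to get tightness of the laws of $\bar u^n$ in $L^2\bigl(0,T;L^2_{\mathrm{loc}}(\bR^l)\bigr)$ via the generalized Aubin--Lions lemma; then the Skorohod representation theorem and the mollifier identity~\eqref{have density}, now tested against $\phi\in C_b(\bR^l)$, identify the $v$-marginal of the limit measure with an $L^2_{\mathrm{loc}}$ density for a.e.\ $t$.

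The core step is to prove $\bQ(\Sigma)=1$, where $\Sigma=\{\mu\in\sC:\Phi^{\zeta^0}_\phi(\mu)=0\ \text{for all}\ \phi\}$. As in Theorem~\ref{thm:FP} I would first reduce the quantifier to a countable family $H_0\subset C^2_c(E\times\bR^l)$ by a truncation-and-mollification argument in the spirit of Remark~\ref{maa}, which simultaneously shows that $\Sigma$ is Borel. Since $\bQ(\overset{\,\diamond}{X})=1$ and each $\Phi^{\zeta^0}_\phi$ is continuous on $\overset{\,\diamond}{X}$ --- this being the generalization of Lemma~\ref{lemma Phi}, and exactly the point where assumption~\ref{E1} (the discontinuity set of $(y,y')\mapsto\mathbf{b}$ has Lebesgue measure $0$, plus uniform continuity in $x,x'$) enters --- it suffices to show $\int_\sC\Phi^{\zeta^0}_\phi\,\ud\bQ^n\to0$. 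Using the It\^o formula for $\langle\overline{S}^n_t,\phi\rangle$, this integral reduces to $\bE\bigl[\sup_t|M^{n,\phi}_t|\wedge1\bigr]$ for a mean-zero martingale $M^{n,\phi}$ built from the stochastic integrals in the $n$ equations, and Doob's inequality together with the boundedness of $\bm{\sigma}$ and $\partial_y\phi$ bounds this by $C\,n^{-1/2}$.

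Finally, the generalization of Theorem~\ref{thm:xd} --- proved by the same mild-formulation plus generalized Gronwall argument, now with the diffusion semigroup generated by $\tfrac12\,\mathrm{Tr}\bigl(\bm{\sigma\sigma}^T D^2_y\,\cdot\bigr)$ on $\bR^l$ and the analogue of the gradient estimates~\eqref{reg1} --- gives uniqueness of the weak measure-valued solution $\zeta\in\overset{\,\diamond}{X}$. Hence $\bQ=\delta_\zeta$; since every weak limit point of $\{\bQ^n\}$ is this same $\delta_\zeta$, the whole sequence converges weakly to $\delta_\zeta$, and convergence of $S^n$ in law to a constant upgrades to convergence in probability in $\sC$. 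I expect the main obstacle to be the continuity argument of the generalized Lemma~\ref{lemma Phi} in the non-compact setting: one must pass to the limit in $\langle\mu^n_s,\langle\mu^n_s-\mu_s,\mathbf{b}\rangle\partial_y\phi\rangle$ despite the discontinuities of $\mathbf{b}$, which requires a tightness truncation to a compact $K_\tau\subset E$ and the equi-uniform continuity in $x$ supplied by~\ref{E1} --- the step most sensitive to replacing $\bT$ by $\bR^l$.
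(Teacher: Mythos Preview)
Your proof sketch is correct and follows essentially the same approach as the paper: the paper's own ``proof'' of Theorem~\ref{thm:FP2} consists of the single sentence ``Now we can repeat verbatim the arguments in the proof of Theorem~\ref{thm:FP},'' preceded by the list of adaptations (generalized Aubin--Lions giving tightness in $L^2(0,T;L^2_{\mathrm{loc}})$, the space $\overset{\,\diamond}{X}$, continuity of $\Phi^{\zeta^0}_\phi$ on $\overset{\,\diamond}{X}$ under~\ref{E1}, etc.), and your sketch spells out precisely this program. One small slip: you write that the Skorohod/mollifier argument yields an $L^2_{\mathrm{loc}}$ density for the $y$-marginal, but $\overset{\,\diamond}{X}$ requires the density to lie in $L^2(\bR^l)$; the upgrade comes from the uniform $L^2(\bR^l)$ bound of the Lemma~\ref{lem:51} analogue (which you already invoked), so this is easily fixed.
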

\noindent The extension fo the well-posedness result for the McKean-Vlasov equation given in Section \ref{section:vlasov} is then straightforward. {For related results on strong well-posedness for McKean-Vlasov equations (without dependence on stochastic parameters) see \cite{MV16} and references therein.}

\bibliographystyle{halpha}
%\bibliography{biblio}

\end{document}